\begin{document}

\newtheorem{lem}{Lemma}[section]
\newtheorem{prop}{Proposition}
\newtheorem{con}{Construction}[section]
\newtheorem{defi}{Definition}[section]
\newtheorem{coro}{Corollary}[section]
\newcommand{\hf}{\hat{f}}
\newtheorem{fact}{Fact}[section]
\newtheorem{theo}{Theorem}
\newcommand{\Br}{\Poin}
\newcommand{\Cr}{{\bf Cr}}
\newcommand{\dist}{{\rm dist}}
\newcommand{\diam}{\mbox{diam}\, }
\newcommand{\mod}{{\rm mod}\,}
\newcommand{\compose}{\circ}
\newcommand{\dbar}{\bar{\partial}}
\newcommand{\Def}[1]{{{\em #1}}}
\newcommand{\dx}[1]{\frac{\partial #1}{\partial x}}
\newcommand{\dy}[1]{\frac{\partial #1}{\partial y}}
\newcommand{\Res}[2]{{#1}\raisebox{-.4ex}{$\left|\,_{#2}\right.$}}
\newcommand{\sgn}{{\rm sgn}}

\newcommand{\CC}{\mathbb{C}}
\newcommand{\D}{{\bf D}}
\newcommand{\Dm}{{\bf D_-}}
\newcommand{\RR}{\mathbb{R}}
\newcommand{\NN}{\mathbb{N}}
\newcommand{\HH}{\mathbb{H}}
\newcommand{\ZZ}{\mathbb{Z}}

\newcommand{\tr}{\mbox{Tr}\,}
\newcommand{\R}{{\bf R}}
\newcommand{\C}{{\bf C}}

\newenvironment{nproof}[1]{\trivlist\item[\hskip \labelsep{\bf Proof{#1}.}]}
{\begin{flushright} $\square$\end{flushright}\endtrivlist}
\newenvironment{proof}{\begin{nproof}{}}{\end{nproof}}

\newenvironment{block}[1]{\trivlist\item[\hskip \labelsep{{#1}.}]}{\endtrivlist}
\newenvironment{definition}{\begin{block}{\bf Definition}}{\end{block}}

\newtheorem{conjec}{Conjecture}

\newtheorem{com}{Comment}
\font\mathfonta=msam10 at 11pt
\font\mathfontb=msbm10 at 11pt
\def\Bbb#1{\mbox{\mathfontb #1}}
\def\lesssim{\mbox{\mathfonta.}}
\def\suppset{\mbox{\mathfonta{c}}}
\def\subbset{\mbox{\mathfonta{b}}}
\def\grtsim{\mbox{\mathfonta\&}}
\def\gtrsim{\mbox{\mathfonta\&}}

\newcommand{\Poin}{{\bf Poin}}
\newcommand{\Bo}{\Box^{n}_{i}}
\newcommand{\Di}{{\cal D}}
\newcommand{\gd}{{\underline \gamma}}
\newcommand{\gu}{{\underline g }}
\newcommand{\ce}{\mbox{III}}
\newcommand{\be}{\mbox{II}}
\newcommand{\F}{\cal{F}}
\newcommand{\Ci}{\bf{C}}
\newcommand{\ai}{\mbox{I}}
\newcommand{\dupap}{\partial^{+}}
\newcommand{\dm}{\partial^{-}}
\newenvironment{note}{\begin{sc}{\bf Note}}{\end{sc}}
\newenvironment{notes}{\begin{sc}{\bf Notes}\ \par\begin{enumerate}}%
{\end{enumerate}\end{sc}}
\newenvironment{sol}
{{\bf Solution:}\newline}{\begin{flushright}
{\bf QED}\end{flushright}}

\title{Limit Drift}

\author{Genadi Levin
\thanks{Supported in part by an ISF grant 1378/13}\\
\small{Einstein Institute of Mathematics}\\
\small{Hebrew University}\\
\small{Givat Ram 91904, Jerusalem, ISRAEL}\\
\small{\tt levin@math.huji.ac.il}\\
\and
Grzegorz \'{S}wia\c\negthinspace tek
\thanks{Supported in part by a grant 2012/05/B/ST1/00551 funded by Narodowe Centrum Nauki.}\\
\small{Department. of Mathematics and Information Science}\\
\small{Politechnika Warszawska}\\
\small{Koszykowa 75}\\
\small{00-662 Warszawa, POLAND}\\
\small{\tt g.swiatek@mini.pw.edu.pl}
}
\normalsize
\maketitle

\abstract{We study the problem of the existence of wild attractors for critical circle 
coverings with Fibonacci dynamics. This is known to be related to the drift for the corresponding 
fixed points of renormalization. The fixed point depends only on the order of the critical point 
$\ell$ and its drift is a number $\vartheta(\ell)$ which is finite for each finite $\ell$. 
We show that the limit $\vartheta(\infty):=\lim_{\ell\rightarrow\infty} \vartheta(\ell)$ exists and is 
finite. The finiteness of the limit is in a sharp contrast with the case of Fibonacci unimodal maps. 
Furthermore, $\vartheta(\infty)$ is expressed as a contour integral in
terms of the limit of the fixed 
points of renormalization when $\ell\rightarrow\infty$. There is a
certain paradox here, since this dynamical limit  is a circle homemomorphism
with the golden mean rotation number whose own drift is $\infty$ for
topological reasons. } 

\section{Introduction.}
Recall that a wild attractor is a compact forward invariant set $K$ 
of zero Lebesgue measure such that for each open neighborhood $U$ 
of $K$ every orbit from
a subset of $U$ with positive Lebesgue measure stays forever in $U$ 
and tends to $K$,
while every orbit from a residual subset of $U$ is outside a certain open neighborhood of $K$ 
infinitely often  under forward
iterations. The existence of wild attractors for one-dimensional 
smooth maps was first
established in~\cite{bkns} for Fibonacci unimodal maps of the interval. 
Wild attractors appear when the order $\ell$ of 
the critical point becomes sufficiently large. That work introduced 
a general probabilistic framework 
for finding a wild attractor once a certain Markov structure has been 
found in the map. 
That work was followed by a program of
S. Van Strien and T. Nowicki for showing the existence of a similar
attractor for a complex polynomial, which would imply that the Julia
set of such a polynomial has positive measure. While that program has
not been followed to successful completion, partial progress was made based
on choosing sufficiently high criticality and studying limits when it
tended to $\infty$, see~\cite{ns} (cf.~\cite{buffcher} for complex maps with a positive
area Julia set).

Another class of real maps in which a wild attractor was expected was a critical 
covering map of the circle 
of Fibonacci type. 
Note that this problem is equivalent to an apparently different 
problem of the existence of non-escaping ``fat'' Cantor sets:
do there exist two branches 
of increasing $C^2$-maps 
of two disjoint subinterval of the unit interval $I$ onto $I$, so that 
the set of 
non-escaping points is a Cantor set of positive length? 
Necessarily, one of the branches must have
a critical point of inflection type and we assume that the itinerary
of this point has the the Fibonacci combinatorics.


In this paper, we study a certain quantity called the {\em drift} which is closely related to 
the problem of existence of a wild attractor for the fixed point map of 
renormalizations of the Fibonacci critical circle covers. As we prove in~\cite{lsuniv},
such a fixed point map $H_\ell$ exists and is unique for every odd $\ell\ge 3$
where $\ell$ is the order at a critical point. 

The concept of the drift can be tracked down to~\cite{bkns}, though the exact definition varies 
depending on the case under consideration and techniques used. 
In our case, we introduce a number $\vartheta(\ell)$ (called drift) such that it is positive  
if and only if the wild attractor exists. 
The key difference of our approach with~\cite{bkns} is that we introduce 
the drift not with respect
of the Lebesgue measure, but with respect to
an invariant measure of some induced map with infinite number of branches. 
This is crucial for further analysis.
Explicitly, a similar definition of the drift appeared in~\cite{morsmania} in the context
of unimodal maps.

We prove that the drift $\vartheta(\ell)$ tends 
to a limit which is expressed exclusively through the limit 
map with infinite criticality
(although as noted the sign of this limit would determine the
existence of wild attractor for maps with big, but {\it finite} criticality).
In the main step of the proof, we represent the drift $\vartheta(\ell)$ as an integral.
This is a completely new step, which allows us to deform
contour of integration to the complex plane
and analyze its behavior as the order $\ell$ tends to infinity.

One should note that the meaning of the limiting drift
is not related to the problem of existence of a wild attractor for the 
limit map
of infinite criticality itself-that question is easy because the limit map 
is a critical circle homeomorphism.

Of course, the key question here is what is the sign of the limit drift.
This question is much simpler to answer 
if the limit is infinite as this is the case for the
Fibonacci unimodal maps, and, in fact, could be extracted from~\cite{bkns}. 
Details are provided in section~\ref{sfuni}.
As we show for the covering maps, the limit is finite. 
This explains why the problem of wild attractor for unimodal maps
is simpler than for critical covering maps.

\paragraph{The class of dynamics.} The following maps were studied in~\cite{lsuniv}.

Consider open intervals $I^0, I^{-1}, I$ in the following configuration: 
$\overline{I}^0 \cap \overline{I}^{-1} = \emptyset$, $\overline{I}^0 \cup \overline{I}^{-1} \subset I$,
$0\in I^0$. 
The map $g$ is defined by its branches $\psi^0, \psi^{-1}$ on the corresponding intervals, which are 
both monotone increasing, $C^3$, and map onto $I$. Furthermore, each branch $\psi^i$, $i=0,-1$,
has exactly one critical point with the local representation as 
$(\xi^i(x))^{\ell}$ where $\xi^i$ are diffeomorphisms and $\ell$ an odd integer bigger then $1$. 
This implies that both branches have the same critical value at $0$. 
The set of such mappings is called ${\cal G}_{\ell}$.    

Maps from ${\cal G}_{\ell}$ can be obtained naturally by inducing from critical circle coverings, 
see~\cite{lsuniv}. 
However, to use the results of that, and most other 
papers, one should consider branches $f(x) = \sqrt[\ell]{g(x^{\ell})}$. 
The effect of this change of normalization is that there is only one critical point at $x=0$.

A map $g$ from ${\cal G}_{\ell}$ is said  
to have the Fibonacci combinatorics if there exists a weakly order preserving map from the circle to $I$ with endpoints 
identified, which conjugates $g$ on the forward orbit 
of the critical value to the dynamics of  an orbit under the golden mean rotation.

Suppose that $g$ is a map from ${\cal G}_{\ell}$ with Fibonacci combinatorics. 
We normalize $g$ so that $g(0)=1$. 
Let $g_1$ be the first return map into $I^0$ with the domains restricted only to those points which return after $1$ or $2$ iterates
It is clear that the new map $g_1$ still belongs to ${\cal G}_{\ell}$ with the new range which is $I^0$. 
Let $\zeta$ be a linear rescaling which maps $g_1(0)$ to $1$. 
Then, the renormalization of $g$, ${\cal R}(g) = \zeta \circ g_1 \circ \zeta^{-1}$.

From Theorem 1 in~\cite{lsuniv} we get the following:

\begin{fact}\label{fa:13xp,1}
For each $\ell$ which is an odd integer greater than $1$, there exists exactly one map $H_{\ell} \in {\cal G}_{\ell}$ 
with the Fibonacci combinatorics and constant 
$\tau =\tau_\ell < -1$ so that if $\phi=\phi^0, \phi^{-1}$ are the branches of $H_{\ell}$, then 
\begin{itemize}
\item $\phi(0) = 1$,
\item \[    \phi^{-1}(x) = \tau \phi \tau^{-1}(x)  \] 
for  $x\in I^{-1}$, 
\item the {\em fixed point equation} holds for all  $x \in \tau^{-1}I^0$:
\begin{equation}\label{equ:27up,1} 
\tau^{-1}\phi \tau(x) = \tau \phi \tau^{-1} \phi(x)\; .
\end{equation}
\item
for any $g\in {\cal G}_{\ell}$ with the Fibonacci combinatorics, 
the sequence of renormalizations ${\cal R}^n(g)$ converges to $H_{\ell}$ 
uniformly on the domain of $H_{\ell}$. 
\end{itemize} 
\end{fact}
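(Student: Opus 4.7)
My plan is to realize $H_\ell$ as a fixed point of the renormalization operator ${\cal R}$ on a suitable compact space of real-analytic Fibonacci maps, read off the functional equations by unpacking the combinatorial description of ${\cal R}$ at the fixed point, and then obtain uniqueness together with convergence by a quasiconformal rigidity argument.

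For the setup, I would work with real-analytic representatives of ${\cal G}_\ell$ normalized by $\phi(0)=1$, passing to the uniformization $f(x) = \sqrt[\ell]{g(x^\ell)}$ in which each branch is holomorphic on a neighborhood of its interval with a single critical point at the origin. The essential technical prerequisite is an \emph{a priori complex bounds} statement for Fibonacci critical coverings: every deep renormalization ${\cal R}^n(g)$ extends to a holomorphic pair on a definite complex neighborhood of $I^0 \cup I^{-1}$, with a uniform lower bound on the modulus of an annulus around the dynamical intervals. Such bounds would be obtained by adapting the Fibonacci complex bounds from unimodal dynamics (Lyubich, Yampolsky, Faria--de Melo) to the two-branch covering setting. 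Once in place they make $\{{\cal R}^n(g)\}$ a normal family, so it is precompact, and any subsequential limit $H$ satisfies ${\cal R}(H)=H$ by continuity of ${\cal R}$ on the limit space.

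To derive the functional equations, decompose the Fibonacci first-return map $g_1$ on $I^0$ according to whether a point returns after one or two iterates of $g=H$. The one-iterate branch, rescaled by the renormalization factor $\tau$, must coincide with the branch $\phi^{-1}$ of $H$; this forces $\phi^{-1}(x) = \tau\phi\tau^{-1}(x)$ with $\tau<-1$, the sign reflecting the orientation reversal built into the golden-mean closest-return scheme on the circle. The two-iterate branch first applies $\phi$, lands in $I^{-1}$, then applies $\phi^{-1}=\tau\phi\tau^{-1}$; equating its rescaled form with $\phi$ on the subinterval $\tau^{-1}I^0$ of $I^0$ produces exactly (\ref{equ:27up,1}).

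For uniqueness and full convergence, given two fixed points $H$ and $H'$ with the same $\ell$ and Fibonacci combinatorics, the combinatorial data produces a quasiconformal conjugacy on a neighborhood of their invariant Cantor sets; pulling this back iteratively through the holomorphic-pair extensions extends its domain while the functional equations constrain the Beltrami differential to be invariant. A standard Bers-type argument forces the conjugacy to be conformal, and the normalizations then pin it down to the identity, so $H=H'$. Since every subsequential limit of ${\cal R}^n(g)$ is a fixed point, uniqueness forces the whole sequence to converge to $H_\ell$. The principal obstacle throughout is the first step, the complex bounds: simultaneously controlling the geometry of two disjoint critical pieces under deep renormalization is the main content of \cite{lsuniv}, which the present paper takes as a black box.
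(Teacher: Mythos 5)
The paper offers no proof of this statement: Fact~\ref{fa:13xp,1} is imported verbatim from Theorem~1 of~\cite{lsuniv}, so there is nothing internal to compare your argument against. Taken on its own terms, your reconstruction gets the combinatorial part right. Unpacking ${\cal R}(H)=H$ branch by branch does give $\phi^{-1}=\tau\phi\tau^{-1}$ from the one-iterate return and $\phi=\tau^2\phi\tau^{-1}\phi\tau^{-1}$ from the two-iterate return, and the latter, after the substitution $x\mapsto\tau x$, is exactly~(\ref{equ:27up,1}); this is consistent with how the present paper later uses that identity (see the discussion after Fact~\ref{fact:26ga,2}).

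There are, however, two genuine gaps. First, your existence argument rests on the claim that ``any subsequential limit $H$ satisfies ${\cal R}(H)=H$ by continuity of ${\cal R}$.'' That is false as stated: if ${\cal R}^{n_k}(g)\to H$, continuity only gives ${\cal R}^{n_k+1}(g)\to{\cal R}(H)$, which identifies ${\cal R}(H)$ as \emph{some} accumulation point of the orbit, not as $H$. Precompactness yields an ${\cal R}$-invariant $\omega$-limit set, and one must still either collapse that set to a point by a rigidity argument or construct the fixed point independently (e.g.\ by solving the functional equation in the Epstein class); the same flaw undermines your deduction of full convergence from uniqueness. Second, everything of substance --- the a priori complex bounds for the two-branch covering setting, the holomorphic extensions, and the quasiconformal rigidity with the Bers-type invariant-line-field step --- is asserted by analogy with the unimodal theory rather than proved. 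You acknowledge this, but it means the proposal is an outline of the strategy of~\cite{lsuniv} rather than a proof; adapting the unimodal complex bounds to two disjoint critical branches with a common critical value is precisely where the cited work does its real labor.
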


\paragraph{The drift.}
The branch $\phi: I^0\to I$ of $H_\ell$ extends
to a real-analytic homeomorphism $\phi_\ell: (\tau_\ell^2 X_\ell, \tau_\ell x_{0,\ell})\to (-\infty, \tau_\ell^2)$ 
where $X_\ell\in I^0$, $X_\ell<0$, is such that $\phi_\ell(X_\ell)=\tau_\ell X_\ell$
and $x_{0,\ell}\in (\tau_\ell^2 X_\ell, X_\ell)$ is a single critical point of $\phi_\ell$
(see Section~\ref{1} for more details).
Denote $J=(\tau_\ell^2 X_\ell, X_\ell)$.
The tower (cf.~\cite{profesorus},~\cite{lsuniv})
$T : \cup_{n\in \ZZ} \tau_\ell^{-n}J \to \RR$ can be defined 
where $T|_{\tau_\ell^{-n}J}=\tau_\ell^{-n}\phi_\ell \tau_\ell^n$. 
One can be see from~(\ref{equ:27up,1}) that 
$$T|_{\tau_\ell^{-n}(J\cap I^0)}=H_\ell^{S_n}, \ \ \ n=0,1,2,...,$$
where $S_0=1$, $S_1=2$, $S_{n}=S_{n-1}+S_{n-2}$, $n\ge 2$, is the Fibonacci sequence. 

The {\em induced dynamics} $\Phi_{\ell} :\:  J \rightarrow J$  is defined with the exception of
countably many points and in the form $\Phi_{\ell}(x) = \tau^{-m(x)}_{\ell} \phi_{\ell}(x)$
where $m(x)=m(x,\ell)$ is the unique integer for which 
$\tau^{-m(x)}_{\ell} \phi_{\ell}(x) \in J$.
In other words, $m(x)=-k(\phi_\ell(x))$ where $k(x)=n$ for $x\in \tau_\ell^{-n}J$ and $n\in \ZZ$. 
Note that the {\em drift function} $m(x)$ is
unbounded: 
$$\inf_{x\in J} m(x)=-\infty, \ \ \ \sup_{x\in J}m(x) =\infty,$$
and $m\in L_p(J)$ for every $p>0$.

The map $\Phi_{\ell}: J\to J$
has a unique invariant probability measure $\mu_\ell$ on $J$ which 
is absolutely continuous with respect to the Lebesgue measure, see Section~\ref{1}. The {\em drift} is 
$$\vartheta(\ell) :=  -\int_{J} m(x) d\mu_\ell(x).$$ 
Importance of this number comes from the fact that the sign of $\vartheta(\ell)$ is responsible
for the existence of wild attractors for the map $H_\ell$. 
Indeed, since $\Phi_{\ell}$ is ergodic, by the Birkhoff Ergodic Theorem,
for Lebesgue almost every point $x\in J$,
\begin{equation}\label{average} 
-\lim_{n\to \infty} \frac{1}{n}\sum_{i=0}^{n-1} m(\Phi_{\ell}^i(x))=\vartheta(\ell).
\end{equation}
On the other hand, 
$$-\sum_{i=0}^n m(\Phi_{\ell}^i(x))=k(T^{n}(x)), \ \ n=0,1,... .$$
It follows from this and~(\ref{average}) that, if $\vartheta(\ell)<0$, then Lebesgue almost every point  
leaves the domain of $H_\ell$ under the tower dynamics. 
In particular, $H_\ell$ has no wild attractor in this case.
If $\vartheta(\ell)>0$, by ~(\ref{average}), 
for Lebesgue almost every point $x$, $T^n(x)\to 0$ as $n\to \infty$.
It follows that there is a set of positive Lebesgue measure 
$E$ in the domain of $H_\ell$ such that for every 
$x\in E$ all forward iterates $T^n(x)$, $n\ge 0$, do not leave this domain and $T^n(x)\to 0$.
Every such $x$ must be in the basin of attraction of the orbit of $0$ of the map $H_\ell$. 
The proof uses the fact that $H_\ell^{S_n}(0)=\tau_\ell^{-n}\in \tau_\ell^{-n}J$, $n\ge 0$, and repeats the one
for the unimodal maps, see~\cite{bruin}, Lemma 5.1.
Therefore, the set $E$ belongs to the basin of attraction of the orbit of $0$.

The case $\vartheta(\ell)=0$ can be handled with help of the Central Limit Theorem for the map $\Phi_{\ell}$
(it is proved under some assumptions on the map and the drift function
which hold for $\Phi_{\ell}$ and $m$). It follows, a.e. point must leave 
the range of $H_\ell$, hence, $H_\ell$ has no wild attractor.

Note also that presumably the fixed point map $H_\ell$ has a wild attractor if and only if 
a wild attractor exists for all maps of the class ${\cal G}_{\ell}$
(see~\cite{morsmania}, where a similar claim is proved in the class of Fibonacci unimodal maps).

\paragraph{Acknowledgement.}
The second author acknowledges that the work on the problem of the drift 
started as a joint project with Edson Vargas.

\section{Induced Dynamics and Invariant Densities}\label{1}
The following notations are consistent with those used
in~\cite{leswi:common}, see also Introduction.

By $\ell$ we will denote an odd integer greater than $1$ or
$\infty$. For every $\ell$ we have the parameters
$\tau_{\ell},X_{\ell},x_{0,\ell}$ subject to the following inequalities:
\[ \tau_{\ell} < \min(X_{\ell}\tau^2_{\ell},-1) \leq \tau_{\ell}^2
X_{\ell} \leq x_{0,\ell} < X_{\ell} < 0 < \tau_{\ell}X_{\ell} < 1 \; .\]
For $\ell$ finite the inequality $\tau^2_{\ell} X_{\ell} < x_{0,\ell}$ is
sharp, but for $\ell=\infty$ these points are equal. 

As finite $\ell$ tend to $\infty$, sequences
$(\tau_{\ell}),(X_{\ell}),(x_{\ell,0})$ tend to limits equal to their
respective values for $\ell=\infty$, see Theorem 1, p. 698,
in~\cite{leswi:common}.    

Also, for every $\ell$ a mapping $\phi_{\ell} :\:
(\tau^2_{\ell}X_{\ell},\tau_{\ell}x_{0,\ell}) \rightarrow \RR$ is given,
which is strictly increasing. For every $\ell$ we have 
\[ \begin{array}{ccc}
\lim_{x\searrow x_{0,\ell}} \phi_{\ell}(x) & = & 0\\
\phi_{\ell}(X_{\ell}) & = & \tau_{\ell}X_{\ell} \\
\phi_{\ell}(0) & = & 1 \\
\lim_{x\nearrow \tau_{\ell}x_{0,\ell}} \phi_{\ell}(x) & = & \tau_{\ell}^2
\end{array} \; .\]
The derivative of $\phi_{\ell}(x)$ is strictly positive whenever
$x_{0,\ell} < x < \tau_{\ell} x_{0,\ell}$. 

Additional information is available for $\ell<\infty$. 
\begin{fact}\label{fact:24ga,1}
For every $\ell<\infty$, the representation
\[ \phi_{\ell}(x) = \left( E(x) \right)^{\ell} \]
holds, where $E:(\tau_{\ell}^2X_{\ell},\tau_{\ell}x_{0,\ell}) \rightarrow
(-\infty,\sqrt[\ell]{\tau^2_{\ell}})$ is a diffeomorphism from the
Epstein class.   
\end{fact}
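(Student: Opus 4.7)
The plan is to first define $E$ as the real-analytic $\ell$-th root of $\phi_\ell$, and then verify the Epstein-class property from the renormalization construction of $H_\ell$ in~\cite{lsuniv}.

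Since $\phi_\ell$ is strictly increasing, real-analytic on $(\tau_\ell^2 X_\ell,\tau_\ell x_{0,\ell})$, and has a unique critical point at $x_{0,\ell}$ of exact local order $\ell$ (because $H_\ell\in {\cal G}_\ell$ forces the local form $(\xi^0(x))^\ell$ near $x_{0,\ell}$, by definition of ${\cal G}_\ell$ combined with Fact~\ref{fa:13xp,1}), the function
\[
E(x):=\sgn(x-x_{0,\ell})\cdot|\phi_\ell(x)|^{1/\ell},\qquad E(x_{0,\ell})=0,
\]
is real-analytic, strictly increasing, and maps $(\tau_\ell^2 X_\ell,\tau_\ell x_{0,\ell})$ diffeomorphically onto $(-\infty,\sqrt[\ell]{\tau_\ell^2})$. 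The real analyticity at $x_{0,\ell}$ follows because $\phi_\ell(x)=c(x-x_{0,\ell})^\ell+O((x-x_{0,\ell})^{\ell+1})$ with $c>0$, so the $\ell$-th root has a non-vanishing derivative there.

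It remains to show that $E$ belongs to the Epstein class. On the subinterval $I^0$, $E$ coincides with the diffeomorphism $\xi^0$ provided by the definition of ${\cal G}_\ell$, and its Epstein property for the renormalization fixed point is part of the construction in~\cite{lsuniv}. Beyond $I^0$, I would use the fixed-point equation~(\ref{equ:27up,1}), rewritten as $\phi(\tau x)=\tau^2\phi(\tau^{-1}\phi(x))$, to express $E$ on a larger interval as a composition of affine rescalings by $\tau$, the Epstein-class map $\xi^0$, and an appropriate inverse branch of $\phi$. Since the Epstein class is preserved under affine rescaling and under composition when the complex univalence domains match up, $E$ remains Epstein on this larger interval; finitely many iterations of the procedure exhaust the full domain $(\tau_\ell^2 X_\ell,\tau_\ell x_{0,\ell})$, with the asymptotics $E(x)\to -\infty$ and $E(x)\to \sqrt[\ell]{\tau_\ell^2}$ at the endpoints being forced by the prescribed boundary behavior of $\phi_\ell$.

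The main obstacle is tracking the complex univalence domains of the inverse branch of $\phi$ appearing in the composition: one must verify that its image lies inside the slit domain on which $\xi^0$ has its Epstein extension, so that the overall composition is univalent on the required slit plane in $\CC$. This kind of domain-nesting verification is precisely what is carried out for the fixed point $H_\ell$ in~\cite{lsuniv}, so the proof should ultimately reduce to citing those lemmas rather than performing any new complex-analytic estimates.
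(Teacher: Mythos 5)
The paper offers no proof of this Fact beyond the citation ``see Lemma 2.1, p.~703, of~\cite{leswi:common}'', so there is no internal argument to compare yours against. Your outline---defining $E$ as the real odd $\ell$-th root, getting analyticity at $x_{0,\ell}$ from the exact order $\ell$ of the critical point, and propagating the Epstein property outward from $I^0$ via the fixed-point equation---is consistent with how that lemma is established, and, exactly as the paper does, you end up deferring the decisive univalence/domain-nesting verification to the cited work, so the two approaches coincide in substance.
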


For a proof see Lemma 2.1, p. 703, of~\cite{leswi:common}. In
particular, for $\ell$ finite the range of $\phi_{\ell}$ is
$(-\infty,\tau_{\ell}^2)$, but for $\ell=\infty$ it is only
$(0,\tau_{\infty}^2)$.  

\paragraph{Convergence statement.}
From~\cite{leswi:common} we get the following.
\begin{fact}\label{fact:24gp,1}
Transformations $(\log(\phi_{\ell}))^{-1}$ with inverse branches chosen so
that 
\[ (\log(\phi_{\ell}))^{-1}\circ\log(\phi_{\ell}(x))=x \; .\]
for $x\in(x_{0,\ell},\tau_{\ell}x_{0,\ell})$ converge to $(\log(\phi_{\infty}))^{-1}$
almost uniformly on 
\[ U:=\CC\setminus[\log\tau^2_{\infty},\infty)\; .\]  
This should be understood to include the claim that on every compact subset    
of $U$ mappings $(\log(\phi_{\ell}))^{-1}$ are defined for $\ell$ large
enough. 
\end{fact}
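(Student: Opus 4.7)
The plan is to use the factorization $\phi_\ell=E_\ell^\ell$ from Fact~\ref{fact:24ga,1} (with $E_\ell$ denoting the Epstein diffeomorphism there, chosen with the branch positive on $(x_{0,\ell},\tau_\ell x_{0,\ell})$) to write
\[ (\log\phi_\ell)^{-1}(w) = E_\ell^{-1}\!\bigl(e^{w/\ell}\bigr), \]
and then to pass to a linear rescaling that simultaneously transports the Epstein cut onto $[\log\tau_\infty^2,\infty)$ and keeps the derivative of the rescaled map bounded away from $0$ and $\infty$. Fact~\ref{fact:24ga,1} supplies a univalent extension of $E_\ell^{-1}$ to $\CC\setminus[\sqrt[\ell]{\tau_\ell^2},\infty)$ whose image lies in $\CC\setminus(\RR\setminus(\tau_\ell^2 X_\ell,\tau_\ell x_{0,\ell}))$.

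Next I would introduce the rescaled family $F_\ell(\xi):=E_\ell^{-1}(1+\xi/\ell)$, which is univalent on
\[ D_\ell := \CC\setminus\bigl[\,\ell(\sqrt[\ell]{\tau_\ell^2}-1),\,\infty\bigr). \]
Since $\ell(\sqrt[\ell]{\tau_\ell^2}-1)=\ell\bigl(e^{(\log\tau_\ell^2)/\ell}-1\bigr)\to\log\tau_\infty^2$, every compact $K\subset U$ is contained in $D_\ell$ for all sufficiently large $\ell$. The image of $F_\ell$ still omits the two real half-lines complementary to $(\tau_\ell^2 X_\ell,\tau_\ell x_{0,\ell})$, whose endpoints converge to finite limits; thus for large $\ell$ the functions $F_\ell$ omit three common finite values, and Montel's theorem yields normality of $\{F_\ell\}$ on $U$.

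Identification of the limit is then carried out on the real axis: for $\xi\in(-\infty,\log\tau_\infty^2)$ and $\ell$ large,
\[ F_\ell(\xi) = \phi_\ell^{-1}\!\bigl((1+\xi/\ell)^\ell\bigr) \longrightarrow \phi_\infty^{-1}(e^\xi) = (\log\phi_\infty)^{-1}(\xi), \]
using $(1+\xi/\ell)^\ell\to e^\xi$ together with the convergence $\phi_\ell^{-1}\to\phi_\infty^{-1}$ on compact subsets of $(0,\tau_\infty^2)$ that follows from the convergence results of~\cite{leswi:common}. Every normal-family accumulation point of $\{F_\ell\}$ therefore agrees with $(\log\phi_\infty)^{-1}$ on a real interval and, by the identity theorem, throughout $U$; so $F_\ell\to(\log\phi_\infty)^{-1}$ almost uniformly on $U$. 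Reversing the rescaling via $(\log\phi_\ell)^{-1}(w)=F_\ell\bigl(\ell(e^{w/\ell}-1)\bigr)$ and noting that $\ell(e^{w/\ell}-1)\to w$ uniformly on bounded subsets of $\CC$ transfers this almost uniform convergence back to $(\log\phi_\ell)^{-1}$ on $U$; along the way, for every compact $K\subset U$ the argument $\ell(e^{w/\ell}-1)$ lies in $D_\ell$ for all $w\in K$ once $\ell$ is large, which gives the asserted definability statement.

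The main obstacle, and really the crux of the proof, is the choice of rescaling $\xi=\ell(\zeta-1)$ in the second step. It is the only linear rescaling that simultaneously converts the cut $[\sqrt[\ell]{\tau_\ell^2},\infty)$ into a cut whose left endpoint converges to $\log\tau_\infty^2$ and normalizes $F_\ell'(0)=1/\phi_\ell'(0)$ to a finite nonzero limit; without this compatibility the Montel-class limit would be constant and would fail to recover $(\log\phi_\infty)^{-1}$. Verifying the derivative asymptotics $(E_\ell^{-1})'(1)=\ell/\phi_\ell'(0)$ and the corresponding domain convergence $D_\ell\to U$ with the correct scaling is the technical heart of the argument.
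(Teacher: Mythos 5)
Your argument is correct and follows essentially the same route as the paper's own (two-sentence) proof: real-line convergence from Theorem 1 of~\cite{leswi:common} to identify the limit, Epstein-class univalent extensions from Fact~\ref{fact:24ga,1}, and Montel's theorem plus the identity theorem to upgrade to almost uniform convergence on $U$. Your explicit rescaling $F_\ell(\xi)=E_\ell^{-1}(1+\xi/\ell)$ in fact supplies the bookkeeping that the paper glosses over — namely why $(\log\phi_\ell)^{-1}$, rather than merely $\phi_\ell^{-1}\circ\exp$, is defined on compact subsets of $U$ for large $\ell$ and why the cut converges to $[\log\tau_\infty^2,\infty)$ — so it is a welcome, not a superfluous, addition.
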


Theorem 1 of~\cite{leswi:common} gives convergence on a non-trivial
segment of the real line. Extensions of $\phi_{\ell}^{-1}$ to $\CC\setminus
[\tau_{\ell}^2,\infty)$  exist by Fact~\ref{fact:24ga,1} and form a
  normal family by Montel's theorem leading to our statement. 

\subsection{Induced dynamics and its properties.}
For each $\ell$ consider its {\em inducing domain} $(\tau^2_{\ell}
X_{\ell},X_{\ell})$. The image of this interval by $\phi_{\ell}$ is 
$(-\infty,\tau_{\ell} X_{\ell})$ for $\ell$ finite and
$(0,\tau^2_{\infty})$ for $\ell=\infty$. In any case, with the
exception of countably many points it decomposes into a disjoint sum
of $\tau^m_{\ell}(\tau^2_{\ell} X_{\ell}, X_{\ell})$  for various
integer values of $m$. 

\begin{defi}\label{defi:24ga,1}
The {\em induced dynamics} 
\[ \Phi_{\ell} :\:  (\tau^2_{\ell}X_{\ell},X_{\ell}) \rightarrow
(\tau^2_{\ell}X_{\ell},X_{\ell})\]  is defined with the exception of
countably many points and in the form
\[ \Phi_{\ell}(x) = \tau^{-m(x,\ell)}_{\ell} \phi_{\ell}(x) \]
where $m(x,\ell)$ is the unique integer for which 
$\tau^{-m(x,\ell)}_{\ell} \phi_{\ell}(x) \in (\tau^2_{\ell}X_{\ell},X_{\ell})$. 
\end{defi}

We will talk about inverse branches of $\Phi_{\ell}$ written as
\begin{equation}\label{equ:24ga,1}
 \psi_{m,\ell}(x) = \phi^{-1}_{\ell}(\tau^m_{\ell} x) \; .
\end{equation}
The {\em domain} of $\psi_{m,\ell}$ means the largest interval on
which formula~(\ref{equ:24ga,1}) defines a diffeomorphism.


For some combinations at $\ell$ and $m$ the domain of $\psi_{m,\ell}$
may be empty. It is non-empty if $m$ is a negative odd integer, or $m$
is any even integer and $\ell<\infty$. 

The basic property for us is:
\begin{lem}\label{lem:24ga,1}
For every $\ell$ and $m$ the domain of $\psi_{m,\ell}$
 if non-empty, contains at
least the interval $(\tau^3_{\ell},0)$. 
\end{lem}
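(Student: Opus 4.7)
The plan is to prove the lemma by direct case analysis on the parity of $m$ and, where relevant, on whether $\ell$ is finite. The only substantive content is to track where the affine map $x\mapsto\tau_\ell^m x$ sends the interval $(\tau_\ell^3,0)$, in relation to the smooth components of $\phi_\ell^{-1}$.

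First I would record the following observation. Because $\phi_\ell$ is strictly increasing on $(\tau_\ell^2 X_\ell,\tau_\ell x_{0,\ell})$ with a unique critical value $0$ at $x_{0,\ell}$, its inverse $\phi_\ell^{-1}$ is a diffeomorphism on each connected component of $\mathrm{range}(\phi_\ell)\setminus\{0\}$. By Fact~\ref{fact:24ga,1} and the paragraph following it, for $\ell<\infty$ these components are $(-\infty,0)$ and $(0,\tau_\ell^2)$, while for $\ell=\infty$ only the component $(0,\tau_\infty^2)$ is available. It therefore suffices to show that, in each non-empty case, $\tau_\ell^m\cdot(\tau_\ell^3,0)$ is contained in one of these components.

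Now the two non-empty cases. If $m$ is even (forcing $\ell<\infty$), then $\tau_\ell^m>0$ and $\tau_\ell^m\cdot(\tau_\ell^3,0)=(\tau_\ell^{m+3},0)\subset(-\infty,0)$, which is a diffeomorphism component for finite $\ell$. If $m$ is a negative odd integer, then $\tau_\ell^m<0$ and $\tau_\ell^m\cdot(\tau_\ell^3,0)=(0,\tau_\ell^{m+3})$; here $m+3$ is an even integer with $m+3\le 2$, and since $|\tau_\ell|>1$ this gives $0<\tau_\ell^{m+3}\le\tau_\ell^2$, so the image sits inside $(0,\tau_\ell^2)$ (respectively $(0,\tau_\infty^2)$ when $\ell=\infty$), again a diffeomorphism component. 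In either situation the composition $\psi_{m,\ell}=\phi_\ell^{-1}\circ(\tau_\ell^m\cdot)$ is a diffeomorphism on all of $(\tau_\ell^3,0)$, so the maximal domain of $\psi_{m,\ell}$ contains this interval.

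There is no real obstacle here; the argument is bookkeeping of signs and parities. What is worth flagging is that the case $m=-1$ gives the equality $\tau_\ell^{m+3}=\tau_\ell^2$, so the image $(0,\tau_\ell^{m+3})$ exactly exhausts the positive smooth component of $\phi_\ell^{-1}$. This is precisely what forces the endpoint $\tau_\ell^3$ in the statement, and shows that the bound $(\tau_\ell^3,0)$ is sharp (attained by $\psi_{-1,\ell}$).
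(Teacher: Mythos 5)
Your proof is correct and follows essentially the same route as the paper's: split into the two non-empty cases according to the parity of $m$, identify the two smooth components $(-\infty,0)$ (finite $\ell$ only) and $(0,\tau_\ell^2)$ of $\phi_\ell^{-1}$, and track the affine rescaling by $\tau_\ell^m$. The only cosmetic difference is that you push $(\tau_\ell^3,0)$ forward into a component rather than pulling the component back to get the full domain $(\tau_\ell^{-m+2},0)$ resp.\ $(-\infty,0)$, and your closing observation that $m=-1$ makes the bound sharp matches the paper's computation.
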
 
\begin{proof}
For the domain of $\psi_{m,\ell}$ to be non-empty, the set
$\phi_{\ell}(\tau_{\ell}^2X_{\ell},X_{\ell})$ must contain the segment 
$\tau_{\ell}^{m}(\tau^2_{\ell} X_{\ell},X_{\ell})$.

If this segment is to the right of $0$, then $m$ is an odd negative
integer. Then $\phi^{-1}_{\ell}$ defines a diffeomorphism on $(0,\tau^2_{\ell})$.
After rescaling by $\tau^{-m}_{\ell}$ this gives $(\tau_{\ell}^{-m+2},0)$ as
the domain, which contains at least $(\tau^3_{\ell},0)$.  

If $\tau_{\ell}^{m}(\tau^2_{\ell} X_{\ell},X_{\ell})$ is to the left
of $0$, which may only happen for $\ell<\infty$,
then $\phi_{\ell}^{-1}$ can be used to define a diffeomorphism on the
entire negative-half line onto $(\tau^{\ell}X_{\ell},x_{0,\ell})$ which
is invariant by the rescaling, so it is the domain of $\psi_{m,\ell}$.
\end{proof}

\begin{lem}\label{lem:24gp,1}
Let $p_0$ be a point chosen so that  
\[ \tau_{\ell} < p_0 < X_{\ell}\tau^2_{\ell} \]
for all $\ell$ sufficiently large or infinite. Then for every $m$ for which the domain of
$\psi_{m,\ell}$ is non-empty, $\psi_{m,\ell}((p_0,0])\subset
\left( \tau_{\ell}^2,\psi_{-1,\ell}(p_0) \right)$. 
\end{lem}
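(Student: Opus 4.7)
The plan is a direct case analysis based on the parity of $m$, driven by monotonicity of $\phi_\ell^{-1}$ together with the pivotal identity $\psi_{m,\ell}(0) = \phi_\ell^{-1}(0) = x_{0,\ell}$. First I would check that $(p_0, 0]$ lies in the domain of every admissible $\psi_{m,\ell}$: the standing hypothesis $\tau_\ell < p_0 < 0$ gives $(p_0, 0] \subset (\tau_\ell, 0) \subset (\tau_\ell^3, 0)$, so Lemma~\ref{lem:24ga,1} applies uniformly. The image is thus a well-defined subinterval of the range of $\phi_\ell^{-1}$, namely the domain $(\tau_\ell^2 X_\ell, \tau_\ell x_{0,\ell})$ of $\phi_\ell$ (interpreting the target interval with $\tau_\ell^2 X_\ell$ as its lower endpoint).

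When $m$ is a negative odd integer, $\tau_\ell^m < 0$, so $\psi_{m,\ell}$ is strictly decreasing on $(p_0, 0]$ and the image is $[x_{0,\ell}, \psi_{m,\ell}(p_0))$. The key estimate is that $|\tau_\ell| > 1$ forces $|\tau_\ell^m| \leq |\tau_\ell^{-1}|$ for every negative odd $m$, with equality only at $m = -1$; hence $0 < \tau_\ell^m p_0 \leq \tau_\ell^{-1} p_0$, and the increasing map $\phi_\ell^{-1}$ yields $\psi_{m,\ell}(p_0) \leq \psi_{-1,\ell}(p_0)$. Together with $x_{0,\ell} > \tau_\ell^2 X_\ell$ (strict for finite $\ell$), this gives the desired containment.

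When $m$ is an even integer (which forces $\ell < \infty$), $\tau_\ell^m > 0$, so $\psi_{m,\ell}$ is strictly increasing and the image is $(\psi_{m,\ell}(p_0), x_{0,\ell}]$. The lower bound $\psi_{m,\ell}(p_0) > \tau_\ell^2 X_\ell$ is automatic since $\phi_\ell^{-1}$ takes values in $(\tau_\ell^2 X_\ell, \tau_\ell x_{0,\ell})$. For the upper bound $x_{0,\ell} < \psi_{-1,\ell}(p_0)$, observe that $\tau_\ell^{-1} p_0 > 0 = \phi_\ell(x_{0,\ell})$ and invoke monotonicity of $\phi_\ell^{-1}$ once more.

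I do not anticipate a substantive obstacle; the argument is essentially bookkeeping of signs, monotonicity directions, and open versus closed endpoints. The sole point requiring care is the limiting case $\ell = \infty$, where $\tau_\infty^2 X_\infty = x_{0,\infty}$ and $\psi_{m,\infty}$ extends only to $(p_0, 0)$ rather than $(p_0, 0]$; there the image becomes an open interval whose lower endpoint coincides with that of the target, and the inclusion persists in the open sense rather than by appeal to the strict inequality used for finite $\ell$.
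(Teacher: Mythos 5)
Your proof is correct and follows essentially the same route as the paper's: a parity case split on $m$, using that for negative odd $m$ one has $0<\tau_\ell^m p_0\le\tau_\ell^{-1}p_0$ so $\phi_\ell^{-1}$ lands the image in $[x_{0,\ell},\psi_{-1,\ell}(p_0))$, while for even $m$ the image sits in $(\tau_\ell^2X_\ell,x_{0,\ell}]$ which lies below $\psi_{-1,\ell}(p_0)$. You correctly read the target's lower endpoint as $\tau_\ell^2X_\ell$ (a typo in the statement), and your extra care with the endpoint $0$ and the degenerate case $\ell=\infty$ is a harmless refinement of the paper's terser argument.
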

\begin{proof}
Let us first consider the case of $m$ negative and odd. Then
$\tau_{\ell}^{m}(p_0,0) \subset (0,p_0\tau_{\ell}^{-1})$. A further mapping by
  $\phi^{-1}_{\ell}$ brings this to 
\[ \left( x_{\ell,0},\phi_{\ell}^{-1}(p_0\tau_{\ell}^{-1})\right) =
\left( x_{\ell,0},\psi_{-1,\ell}(p_0) \right) \]
so the claim holds in this case. 

For $m$ even the range of every $\psi_{\ell,m}$ is contained in
$(\tau_{\ell}^2X_{\ell},x_{0,\ell})$.
\end{proof}

\begin{lem}\label{lem:24gp,2}
There is a fixed closed segment $I_0$ contained in the interior of
$(p_0,0)$ such that for every $m$ and $\ell$ sufficiently large or infinite
\[ \psi_{m,\ell}(p_0,0) \subset I_0 \; .\]
\end{lem}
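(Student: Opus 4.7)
The plan is to use Lemma~\ref{lem:24gp,1} as the main input: it supplies, for every admissible pair $(m,\ell)$, a common bounding interval with endpoints depending only on $\ell$, namely $\tau_\ell^2 X_\ell$ on the left and $\psi_{-1,\ell}(p_0)$ on the right. So the task reduces to showing that this bounding interval is contained in a single closed subsegment of $(p_0,0)$ for all $\ell$ sufficiently large and for $\ell=\infty$. Since the bounding interval depends continuously on $\ell$, this in turn reduces to proving that the limit bounding interval at $\ell=\infty$ sits strictly inside $(p_0,0)$.

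I would handle the two endpoints separately. The left endpoint $\tau_\ell^2 X_\ell$ converges to $\tau_\infty^2 X_\infty$ by the parameter convergence recalled at the beginning of Section~\ref{1}; combined with the standing hypothesis on $p_0$ that $p_0<\tau_\ell^2 X_\ell$ (applied at $\ell=\infty$), the limit sits strictly to the right of $p_0$. For the right endpoint, I write $\psi_{-1,\ell}(p_0)=\phi_\ell^{-1}(p_0/\tau_\ell)$ and invoke Fact~\ref{fact:24gp,1}. The argument $p_0/\tau_\ell$ lies in $(0,1)$, because $\tau_\ell<p_0<0$ and $\tau_\ell<-1$, so after passing through the logarithm the numbers $\log(p_0/\tau_\ell)$ stay in a compact subset of $U=\CC\setminus[\log\tau_\infty^2,\infty)$; Fact~\ref{fact:24gp,1} then yields $\phi_\ell^{-1}(p_0/\tau_\ell)\to\phi_\infty^{-1}(p_0/\tau_\infty)=\psi_{-1,\infty}(p_0)$. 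Because $\phi_\infty$ is increasing with $\phi_\infty(0)=1$ and $p_0/\tau_\infty<1$, the limit satisfies $\psi_{-1,\infty}(p_0)<0$ strictly.

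To conclude, I pick any closed $I_0\subset(p_0,0)$ whose interior contains the closed limit interval $[\tau_\infty^2 X_\infty,\psi_{-1,\infty}(p_0)]$; the endpoint convergences above then guarantee that the bounding interval from Lemma~\ref{lem:24gp,1} is contained in $I_0$ for every $\ell$ large enough and also for $\ell=\infty$. The only mild technical point in the whole argument is verifying that Fact~\ref{fact:24gp,1} may indeed be applied at $p_0/\tau_\ell$, i.e.\ that this point sits inside the convergence domain $U$; but that is immediate from its location relative to the cut. No harder estimate seems necessary, since this lemma is essentially a limit-continuity consequence of Lemma~\ref{lem:24gp,1} and Fact~\ref{fact:24gp,1}.
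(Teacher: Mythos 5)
Your proposal is correct and follows essentially the same route as the paper: reduce via Lemma~\ref{lem:24gp,1} to the single bounding interval $(\tau_\ell^2 X_\ell,\psi_{-1,\ell}(p_0))$, use Fact~\ref{fact:24gp,1} and the parameter convergence to pass to the limit interval, check that $\psi_{-1,\infty}(p_0)<0$ strictly, and choose $I_0$ to contain the limit interval compactly inside $(p_0,0)$. Your treatment is, if anything, slightly more careful about the left endpoint and about why $p_0/\tau_\ell$ lies in the convergence domain, but the argument is the same.
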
   
\begin{proof} 
This segment must be contained in
$(\tau^2_{\ell}X_{\ell},\psi_{-1,\ell}(p_0))$ for all $\ell$. But the
the convergence statement, see Fact~\ref{fact:24gp,1}, 
\[ q_0:=\lim_{\ell\rightarrow\infty}\psi_{-1,\ell}(p_0) =
\psi_{-1,\infty}(p_0) < \psi_{-1,\infty}(\tau_{\infty}) = 0 \]
where the inequalities come from the fact that $\psi_{-1,\ell}$, or
generally $\psi_{m,\ell}$ for $m$ odd, are order-reversing. Then as
$I_0$ we can pick any segment with the right endpoint in
$(p_0,x_{0,\ell})$ and the left endpoint in $(q_0,0)$. 
\end{proof}  

\begin{coro}\label{coro:24gp,1}
For all $\ell$ sufficiently large or infinite $\Phi_{\ell}$ is
uniformly expanding in the Poincar\'{e} metric of $\CC\setminus\{
x\in\RR :\: x\notin [p_0,0]\}$.
\end{coro}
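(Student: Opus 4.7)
The plan is to reduce the expansion of $\Phi_\ell$ to a uniform Schwarz--Pick contraction estimate for the countably many inverse branches $\psi_{m,\ell}$ from (\ref{equ:24ga,1}). Since $\Phi_\ell$ is locally the inverse of some $\psi_{m,\ell}$, it suffices to produce a constant $C<1$, independent of $m$ and of $\ell$ sufficiently large or infinite, such that the Poincar\'{e}-$\Omega$ derivative of every $\psi_{m,\ell}$ is at most $C$ at any real point of the inducing domain, where $\Omega:=\mathbb{C}\setminus\{x\in\mathbb{R}: x\notin[p_0,0]\}$.

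First I would extend each $\psi_{m,\ell}$ to a univalent holomorphic map on all of $\Omega$. By Fact~\ref{fact:24ga,1}, the factorization $\phi_\ell=E^\ell$ with $E$ in the Epstein class provides a univalent holomorphic extension of $\phi_\ell^{-1}$ to the slit plane $\mathbb{C}\setminus[\tau_\ell^2,\infty)$. The formula $\psi_{m,\ell}(z)=\phi_\ell^{-1}(\tau_\ell^m z)$ then gives the extension once one checks $\tau_\ell^m\cdot\Omega\subset\mathbb{C}\setminus[\tau_\ell^2,\infty)$; this is a case analysis analogous to the proof of Lemma~\ref{lem:24ga,1}, in which the inequality $|p_0|<|\tau_\ell|$ implicit in the choice of $p_0$ controls each admissible $m$.

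Next, by Lemma~\ref{lem:24gp,2}, $\psi_{m,\ell}$ sends the real interval $(p_0,0)$ into the fixed closed segment $I_0\Subset(p_0,0)$. The Schwarz reflection principle (equivalently, the open mapping theorem applied on each complex half-plane of $\Omega$) then forces $\psi_{m,\ell}$ to send each half-plane of $\Omega$ into a single half-plane of $\mathbb{C}$, whence $\psi_{m,\ell}(\Omega)\subset\Omega_0:=\mathbb{C}\setminus\{x\in\mathbb{R}: x\notin I_0\}\subset\Omega$. Applying the Schwarz--Pick lemma to the factorization $\Omega\to\Omega_0\hookrightarrow\Omega$, the Poincar\'{e}-$\Omega$ derivative of $\psi_{m,\ell}$ at $z$ is bounded above by $\rho_\Omega(\psi_{m,\ell}(z))/\rho_{\Omega_0}(\psi_{m,\ell}(z))$.

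The main point is then the uniform bound $\rho_\Omega/\rho_{\Omega_0}\leq C<1$ on $I_0$. Both densities are continuous and positive on $\mathrm{int}(I_0)$, with $\rho_{\Omega_0}$ tending to $+\infty$ at the endpoints $\partial I_0\subset\partial\Omega_0$, while strict monotonicity of the Poincar\'{e} density under the strict inclusion $\Omega_0\subsetneq\Omega$ gives $\rho_\Omega/\rho_{\Omega_0}<1$ pointwise; compactness of $I_0$ then yields a maximum $C<1$ that does not depend on $m$ and, by Fact~\ref{fact:24gp,1}, does not depend on $\ell$ for $\ell$ large or infinite. Since $\psi_{m,\ell}$ applied to any real point of the inducing domain lies in $I_0$ by Lemma~\ref{lem:24gp,2}, the desired uniform expansion $\|D\Phi_\ell\|_\Omega\geq 1/C>1$ follows at once. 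The main technical hurdle is the extension step, together with verifying that the choice of $I_0$ from the proof of Lemma~\ref{lem:24gp,2} can indeed be made independent of $\ell$ while still containing all the relevant image intervals; both amount to careful bookkeeping based on the convergence of the parameters $\tau_\ell,X_\ell,x_{0,\ell}$ as $\ell\to\infty$.
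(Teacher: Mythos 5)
Your proposal is correct and is exactly the argument the paper intends: the corollary is stated without an explicit proof as an immediate consequence of Lemma~\ref{lem:24gp,2} (the uniform compact containment $\psi_{m,\ell}(p_0,0)\subset I_0$) together with the univalent extension of the inverse branches coming from the Epstein class property, and you have correctly supplied the standard Schwarz--Pick comparison of the two slit domains that turns this containment into a contraction factor $C<1$ uniform in $m$ and in $\ell$ large or infinite.
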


\subsection{Perron-Frobenius-Ruelle transfer operators.}
The transfer operator is given by the formula
\begin{equation}\label{equ:24gp,1}
P_{\ell} f(z) = \sum_{m} (-1)^m \psi'_{m,\ell}(z) f\circ \psi_{m,\ell}(z) \; .
\end{equation}

The definition is, of course, not complete, without specifying the
space to which $f$ belong. 

We will use the term {\em density} to denote a real-valued
non-negative function whose integral over the indicated space with
respect to the Lebesgue measure is $1$. 

\begin{fact}\label{fact:24gp,2}
For every $\ell$ sufficiently large or infinite, the Perron-Frobenius
operator acting on the space $L_1(\tau_{\ell}^2 X_{\ell}, X_{\ell})$
has unique non-negative invariant density $f_{\ell} =
\lim_{n\rightarrow\infty} P_{\ell}^n {\bf e}$, which in fact is 
positive, where ${\bf e}$ is a constant density.  
\end{fact}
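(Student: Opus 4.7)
The plan is to apply the standard machinery of Perron--Frobenius theory for expanding Markov maps with countably many branches, using the analytic structure and uniform hyperbolicity already set up. Specifically, I would work not on $L_1$ directly but on a cone of positive functions holomorphic on a complex neighbourhood $V$ of the real interval $(p_0,0)$, exploiting that by Fact~\ref{fact:24ga,1} (together with the extension argument behind Fact~\ref{fact:24gp,1}) every inverse branch $\psi_{m,\ell}$ extends holomorphically to a domain containing $V$, that by Lemma~\ref{lem:24gp,2} the images $\psi_{m,\ell}(V)$ all lie inside a fixed compact set $I_0 \subset\subset (p_0,0)$, and that by Corollary~\ref{coro:24gp,1} the induced map $\Phi_\ell$ is uniformly expanding in the Poincar\'{e} metric of $\CC\setminus((-\infty,p_0]\cup [0,\infty))$, uniformly in $\ell$ large.

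The first step is to check that the series~(\ref{equ:24gp,1}) defines a bounded positive operator taking densities to densities. Positivity of each term (after the sign $(-1)^m$ cancels the sign of $\psi'_{m,\ell}$, which is negative exactly when $m$ is odd since $\psi_{m,\ell}$ is then orientation reversing) is immediate. Total mass conservation is a change-of-variables computation using that the images $\psi_{m,\ell}((\tau_\ell^2 X_\ell,X_\ell))$ tile the inducing domain up to a countable set. Convergence of the sum uniformly on compact subsets of $V$ for bounded $f$ follows from the Koebe-type bound $\sum_m |\psi'_{m,\ell}(z)| \lesssim \sum_m |\psi_{m,\ell}(V)|/\mathrm{dist}(\psi_{m,\ell}(V),\partial I_0)$, which is summable because the images are disjoint subsets of $I_0$.

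The second step is the existence and uniqueness of an invariant density. By the uniform Poincar\'e expansion, each $\psi_{m,\ell}$ strictly contracts $V$ into $I_0$ with a factor $\kappa<1$ independent of $m$ and of $\ell$ (for $\ell$ large or infinite). Consider the cone $\mathcal{C}$ of functions holomorphic on $V$ whose real parts are positive on $(p_0,0)$. A direct estimate using the contraction factor $\kappa$ and Koebe distortion shows that $P_\ell$ maps $\mathcal{C}$ into a subcone of uniformly bounded Hilbert projective diameter, so by the Birkhoff--Hopf theorem $P_\ell$ is a strict contraction on $\mathcal{C}$ in the Hilbert metric. Hence there is a unique (up to scaling) fixed ray, giving a unique positive analytic density $f_\ell$, and $P_\ell^n g \to f_\ell$ geometrically in the Hilbert metric for every $g\in\mathcal{C}$; in particular for $g={\bf e}$. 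Since Hilbert-metric convergence of normalised positive functions upgrades to uniform convergence on compact subsets of $(p_0,0)$, and then to $L_1$ convergence by the Lebesgue dominated convergence theorem (the iterates are uniformly bounded on $I_0$ and vanish outside), we obtain $f_\ell = \lim_n P_\ell^n {\bf e}$ in $L_1$.

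The last step is uniqueness in the full space $L_1(\tau_\ell^2 X_\ell,X_\ell)$. Any non-negative invariant density $g\in L_1$ must in particular satisfy $P_\ell^n g = g$ for all $n$. Applying $P_\ell$ once regularises $g$: the identity $g = P_\ell g$ expresses $g$ as a convergent series of positive analytic pieces on $V$, so $g$ is smooth (indeed real-analytic) and strictly positive on the interior of the inducing domain. Thus $g \in \mathcal{C}$ after normalisation, and the contraction established above forces $g = f_\ell$. The main technical obstacle is the verification that the Hilbert-metric contraction is uniform in $\ell$, which however is exactly what the uniform Poincar\'e-metric expansion of Corollary~\ref{coro:24gp,1}, combined with the Lemma~\ref{lem:24gp,2} bound placing all branch images in a common compact set $I_0$, is tailor-made to give.
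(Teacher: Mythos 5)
The paper does not prove this Fact from first principles: its entire justification is the single sentence that it ``follows from the Folklore Theorem in the light of Corollary~\ref{coro:24gp,1}.'' What you have written is therefore not a variant of the paper's argument but a self-contained proof of the Folklore Theorem in this particular setting, via a Birkhoff--Hopf cone contraction on holomorphic functions. That route is legitimate, and it buys something the bare citation does not: your cone argument directly produces the holomorphic extension of the density and its uniform control in $\ell$, which the paper has to recover separately afterwards (Lemmas~\ref{lem:24gp,3}--\ref{lem:25ga,2}). Your reading of the sign convention (the factor $(-1)^m$ cancelling the sign of $\psi'_{m,\ell}$, which is negative precisely for odd $m$) and your use of Lemma~\ref{lem:24gp,2} and Corollary~\ref{coro:24gp,1} to contract everything uniformly into $I_0$ match the paper's setup exactly.

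One step fails as written, namely the claim that ``applying $P_\ell$ once regularises $g$'' for an arbitrary non-negative invariant $g\in L_1$. In the series $P_\ell g=\sum_m(-1)^m\psi'_{m,\ell}\,(g\circ\psi_{m,\ell})$ the factors $\psi'_{m,\ell}$ are analytic, but $g\circ\psi_{m,\ell}$ is only measurable, so $P_\ell g$ is in general no more regular than $g$; the identity $g=P_\ell g$ does not place $g$ in your cone $\mathcal{C}$, and the contraction cannot be applied to it. To close the $L_1$-uniqueness you need a separate ingredient. Either deduce ergodicity (indeed exactness) of $\Phi_\ell$ with respect to the absolutely continuous invariant measure from the full-branch Markov structure and bounded distortion, after which uniqueness of the invariant density is automatic; or argue by approximation: differences of elements of $\mathcal{C}$ are dense in $L_1$ of the inducing domain, $\|P_\ell^n g\|_{L_1}\leq\|g\|_{L_1}$, and $P_\ell^n h\to(\int h)f_\ell$ in $L_1$ for $h\in\mathcal{C}$ by your contraction, whence $P_\ell^n g\to f_\ell$ for every density $g$, forcing any invariant density to equal $f_\ell$. (A smaller point: for the Hilbert projective diameter of $P_\ell\mathcal{C}$ to be finite you should define $\mathcal{C}$ by a complex positivity condition, e.g.\ values in a fixed sector on all of $V$, not merely positivity on the real segment; the Koebe distortion bound you invoke does place $P_\ell\mathcal{C}$ in such a subcone, so this is a matter of bookkeeping.) With these repairs the proposal is complete.
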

This follows from the Folklore Theorem in the light of
Corollary~\ref{coro:24gp,1}.  

\paragraph{Functional spaces.}
Let $D_0$ denote the disk whose diameter is the segment $I_0$ from
Lemma~\ref{lem:24gp,2}. We will also need a closed segment $I_1$ such
that 
\[ I_0 \subset  \mbox{int}\:(I_1) \subset I_1 \subset (p_0,0) \; .\]
Then $D_1$ denotes the disk whose diameter is $I_1$.  

The functional space ${\cal F}_0$ will consist
of all functions bounded and holomorphic on $D_0$, real on the real
line, as well continuous
on $\overline{D_0}$, with the $C^0$
(supremum) norm. It is clearly a Banach space. We will also use a
space ${\cal F}_1$ of all continuous complex-valued functions, real on
the real line, defined on
the disk $D_1$ with the $C^0$ norm. 


\paragraph{Properties of the transfer operator.}
Looking at formula~(\ref{equ:24gp,1}) one can see that the
transfer operator is well-defined on ${\cal F}_1$ as a formal
sum, since $\psi_{m,\ell}(D_1)\subset D_0$ by Lemma~\ref{lem:24gp,1}
and the Epstein class property expressed in Fact~\ref{fact:24ga,1}. 

To consider convergence, observe the following simple fact:
\begin{lem}\label{lem:24gp,3} 
There exists a constant $K_1$ such that for all positive integers $n$,
all sequences $m_1,\cdots,m_n$, all $\ell$
sufficiently large or infinite all $z\in D_1$ 
\[ |(\tilde{\psi})'(z)| \leq K_1
|\tilde{\psi}(\tau^2_{\ell}X_{\ell},X_{\ell})|\; \]
where $\tilde{\psi} = \psi_{m_n,\ell}\circ\cdots\circ \psi_{m_1,\ell}$.  
\end{lem}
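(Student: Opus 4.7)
My plan is to combine the Koebe distortion theorem with the mean value theorem applied to the inducing interval $J:=(\tau_\ell^2 X_\ell, X_\ell)$. Choose $I_1$ so that $J\subset I_1\subset(p_0,0)$ for all $\ell$ sufficiently large or infinite; this is possible because $\tau_\ell$ and $X_\ell$ converge to non-degenerate finite limits by Section~\ref{1}. Then $J\subset D_1\cap\RR$, and the idea is to control $\sup_{D_1}|\tilde\psi'|$ by $|\tilde\psi(J)|/|J|$ via bounded distortion, and then absorb $|J|^{-1}$ into the constant using that $|J|\ge\delta>0$ uniformly in $\ell$.

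The crucial first step is to exhibit a fixed open neighborhood $\widehat D\supset D_1$ on which every composition $\tilde\psi=\psi_{m_n,\ell}\circ\cdots\circ\psi_{m_1,\ell}$ is univalent. Each factor $\psi_{m,\ell}(z)=\phi_\ell^{-1}(\tau_\ell^m z)$ is univalent on $\CC\setminus\tau_\ell^{-m}[\tau_\ell^2,\infty)$ by Fact~\ref{fact:24ga,1} (and by Fact~\ref{fact:24gp,1} when $\ell=\infty$), and for every admissible $(m,\ell)$ this excluded real ray lies in $(-\infty,p_0]\cup[0,\infty)$, at positive distance from $D_1$. One can therefore choose $\widehat D\subset\CC\setminus(\RR\setminus[p_0,0])$ with $D_1$ compactly inside $\widehat D$, on which every $\psi_{m,\ell}$ is univalent. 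Using Lemma~\ref{lem:24gp,2} (so $\psi_{m,\ell}(I_1)\subset I_0\subset\mathrm{int}(I_1)$) together with a normal-family/Schwarz-reflection argument, one arranges $\psi_{m,\ell}(\widehat D)\subset\widehat D$ uniformly in $m$ and $\ell$. All compositions $\tilde\psi$ are then univalent on $\widehat D$.

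With this in hand, the Koebe distortion theorem applied to $\tilde\psi$ on $\widehat D$, restricted to the compact $D_1$, yields a constant $K_0=K_0(D_1,\widehat D)$ independent of $n$, $\ell$, and $(m_j)$ with
\begin{equation*}
\sup_{z\in D_1}|\tilde\psi'(z)|\le K_0\inf_{x\in J}|\tilde\psi'(x)|.
\end{equation*}
The mean value theorem on the real interval $J\subset D_1$ then gives
\begin{equation*}
|\tilde\psi(J)|=\int_J|\tilde\psi'(x)|\,dx\ge|J|\inf_{x\in J}|\tilde\psi'(x)|\ge\frac{|J|}{K_0}\sup_{z\in D_1}|\tilde\psi'(z)|.
\end{equation*}
Since $|J|=|X_\ell|(\tau_\ell^2-1)\to|X_\infty|(\tau_\infty^2-1)>0$ as $\ell\to\infty$, there is $\delta>0$ with $|J|\ge\delta$ for all $\ell$ sufficiently large or infinite, and the lemma follows with $K_1:=K_0/\delta$.

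The main obstacle is the uniform invariance $\psi_{m,\ell}(\widehat D)\subset\widehat D$ needed in the first step, since only with this invariance does the composition remain univalent on a single domain rather than on an $m$-dependent nested sequence of domains. Real invariance is provided by Lemma~\ref{lem:24gp,2}; its complex extension should follow by combining the uniform Poincar\'e contraction of Corollary~\ref{coro:24gp,1} with continuity of $\psi_{m,\ell}$ up to the real boundary, but checking this uniformly over all admissible $(m,\ell)$ is the technical heart of the argument.
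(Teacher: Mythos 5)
Your overall architecture is the same as the paper's: univalent continuation of the whole composition $\tilde\psi$ to a fixed domain, Koebe distortion to compare $|\tilde\psi'(z)|$ for $z\in D_1$ with $|\tilde\psi'|$ at a real point, the mean value theorem to convert that into $|\tilde\psi(\tau_\ell^2X_\ell,X_\ell)|/|(\tau_\ell^2X_\ell,X_\ell)|$, and a uniform lower bound on the length of the inducing interval. That part is correct and is exactly how the paper concludes.

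However, you leave the decisive step --- univalence of the \emph{composition} on a single fixed domain --- as an acknowledged gap, and the route you sketch for closing it (forward invariance $\psi_{m,\ell}(\widehat D)\subset\widehat D$ of a bounded neighborhood of $D_1$, to be obtained from Poincar\'e contraction plus a ``normal-family/Schwarz-reflection argument'') is both vaguer and harder than what is needed. The paper closes this in one line using the Epstein class property of Fact~\ref{fact:24ga,1} together with Lemma~\ref{lem:24ga,1}: each inverse branch $\psi_{m,\ell}$ is defined on a real interval containing $(\tau_\ell^3,0)$ and, being (a rescaling of) the inverse of an Epstein-class map, extends univalently to the slit plane $\CC\setminus\{x\in\RR:\ x\notin[\tau_\ell^3,0]\}$ and maps it into the slit plane based on its real image interval. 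Since that image interval lies in $(\tau_\ell^2X_\ell,X_\ell)\subset(\tau_\ell^3,0)$, the slit plane is forward invariant under every admissible $\psi_{m,\ell}$, so every composition $\tilde\psi$ is automatically univalent there; $D_1$ and the inducing interval are both compactly contained in this slit plane uniformly in $\ell$, so Koebe applies with a constant independent of $n$, $(m_j)$ and $\ell$. In other words, the invariant domain you are looking for is not a disk around $D_1$ but the slit plane itself, and its invariance is free from the Epstein structure --- no contraction or reflection argument is required. Without supplying this (or some substitute), your proof is incomplete at its central point, since univalence of each factor on $\widehat D$ does not by itself make the composition well defined, let alone univalent, on $\widehat D$. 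A secondary, harmless deviation: you insist on $(\tau_\ell^2X_\ell,X_\ell)\subset D_1$ so as to take the infimum of $|\tilde\psi'|$ over a subset of $D_1$; the paper instead compares $z\in D_1$ directly with a real point $\hat z$ of the inducing interval, both lying in one fixed compact subset of the slit plane, which avoids re-tuning $I_1$.
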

\begin{proof}
From Lemma~\ref{lem:24ga,1} and Fact~\ref{fact:24ga,1} all
$\tilde{\psi}$ have univalent continuations to $\CC\setminus\{
x\in\RR :\: x\notin [\tau_{\ell}^3,0]\}$ so by Koebe's lemma 
\[ |(\tilde{\psi})'(z)| \leq L_1  |(\tilde{\psi})'(\hat{z})| \]
for any $\hat{z}\in D_1$, where the constant $L_1$ is independent of
$n, (m_n), \ell, z$. But for a suitably chosen real $\hat{z}$ 
\[ |(\tilde{\psi})'(\hat{z})| =
\frac{|\tilde{\psi}(\tau^2_{\ell}X_{\ell},X_{\ell})|}{|(\tau^2_{\ell}X_{\ell},X_{\ell})|} \]
    which yields the lemma, since the denominator is bounded below by 
\[ 2^{-1}|(x_{0,\infty},\tau^{-2}_{\infty}x_{0,\infty})|\] 
 for all $\ell$
    sufficiently large.  
 \end{proof}

It is now clear that the sum in formula~\ref{equ:24gp,1} converges
uniformly in the operator norm on ${\cal F}_1$ and so the operator is
well-defined. Furthermore,
\begin{lem}\label{lem:24gp,4}
For $\ell$ sufficiently large or infinite operator $P_{\ell}$ acting
on ${\cal F}_0$ is compact. 
\end{lem}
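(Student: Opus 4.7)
The plan is to show that $P_\ell$ factors through the Banach space $\mathcal{A}(D_1)$ of functions continuous on $\overline{D_1}$, holomorphic on $D_1$, and real on $I_1$, and then to obtain compactness from the compact inclusion $\mathcal{A}(D_1)\hookrightarrow\mathcal{F}_0$ given by restriction to $\overline{D_0}$. Since $\overline{D_0}$ is compactly contained in $D_1$, Montel's theorem together with Arzel\`a--Ascoli makes that inclusion compact, so it is enough to show that $P_\ell:\mathcal{F}_0\to\mathcal{A}(D_1)$ is bounded.

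The crux is the geometric assertion $\psi_{m,\ell}(\overline{D_1})\subset D_0$ for every admissible $m$ and all $\ell$ sufficiently large or infinite. On the real axis this is essentially contained in Lemma~\ref{lem:24gp,2}, which places $\psi_{m,\ell}(I_1)\subset\psi_{m,\ell}(p_0,0)\subset I_0\subset\mathrm{int}(I_1)$ with a definite Euclidean gap. To pass to the Euclidean disks, I use Fact~\ref{fact:24ga,1} and Lemma~\ref{lem:24ga,1} to extend each $\psi_{m,\ell}$ univalently to a fixed open neighborhood of $\overline{D_1}$, on which Koebe distortion is uniformly bounded; this pins $\psi_{m,\ell}(\overline{D_1})$ inside a round domain of controlled shape surrounding the real image segment. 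Alternatively, Corollary~\ref{coro:24gp,1} gives uniform Poincar\'e contraction of the $\psi_{m,\ell}$ in $\Omega=\mathbb{C}\setminus(\mathbb{R}\setminus[p_0,0])$, and $\overline{D_1}$ has bounded Poincar\'e diameter in $\Omega$, so its image sits in a hyperbolic neighborhood of $I_0$ of fixed size. By choosing $I_0$ slightly tighter than required by Lemma~\ref{lem:24gp,2} (which the proof of that lemma permits), this neighborhood is contained in $D_0$.

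Granted the inclusion $\psi_{m,\ell}(\overline{D_1})\subset D_0$, for each $f\in\mathcal{F}_0$ every summand $(-1)^m\psi_{m,\ell}'(z)f(\psi_{m,\ell}(z))$ is holomorphic on a neighborhood of $\overline{D_1}$. The argument of Lemma~\ref{lem:24gp,3}, carried out on $\overline{D_1}$ in place of $D_1$, gives $\sum_m\sup_{\overline{D_1}}|\psi_{m,\ell}'|<\infty$ uniformly in $\ell$, so the series defining $P_\ell f$ converges uniformly on $\overline{D_1}$ and represents an element of $\mathcal{A}(D_1)$ of norm at most $C\|f\|_{\mathcal{F}_0}$. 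Composing with the compact restriction $\mathcal{A}(D_1)\to\mathcal{F}_0$ yields the compactness of $P_\ell$ on $\mathcal{F}_0$. The only non-routine step is the Euclidean containment $\psi_{m,\ell}(\overline{D_1})\subset D_0$; once that is settled, the rest is a standard Montel-plus-absolute-convergence argument.
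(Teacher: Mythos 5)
Your proof is correct and follows essentially the same route as the paper's: $P_\ell$ sends the unit ball of ${\cal F}_0$ into a uniformly bounded family of holomorphic functions on the strictly larger disk $D_1$ (via Lemma~\ref{lem:24gp,3}), and restriction of this normal family to $\overline{D_0}\Subset D_1$ is compact by Montel. The one sub-step you labour over, $\psi_{m,\ell}(D_1)\subset D_0$, is obtained in the paper directly from the Epstein class property of Fact~\ref{fact:24ga,1} (half-plane preservation plus the standard Poincar\'e-disk lemma gives $\psi_{m,\ell}(D(I_1))\subset D(\psi_{m,\ell}(I_1))\subset D(I_0)$), which is cleaner than the Koebe-distortion variant you sketch first; your Poincar\'e-metric alternative is essentially this argument.
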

\begin{proof}
For any $f$ from the unit ball in ${\cal F}_0$,
formula~(\ref{equ:24gp,1}) 
shows that $P_{\ell}f$ is a holomorphic element of ${\cal F}_1$. From
Lemma~\ref{lem:24gp,3} its norm is bounded by $K_1 |(\tau_{\ell}^2
X_{\ell},X_{\ell})|$. So, $P_{\ell} B_{{\cal F}_0}(0,1)$  is a normal
family on $D_1$. Then, after
restriction to ${\overline D}_{0}$ every sequence from this family has
a uniformly convergent subsequence, but this means compactness in ${\cal
  F}_0$. 
\end{proof}

Additionally we get
\begin{lem}\label{lem:25ga,1}
There is a constant $K_2$ such that for the transfer operator
acting on ${\cal F}_{0}$ for all $\ell$ and $n$ 
$\| P^n_{\ell} \| \leq K_2$.
\end{lem}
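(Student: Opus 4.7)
The plan is to iterate formula~(\ref{equ:24gp,1}) and apply the pointwise bound from Lemma~\ref{lem:24gp,3} to the compositions of inverse branches that arise. Concretely, for $f \in {\cal F}_0$ and $z \in D_0 \subset D_1$,
\[
P_\ell^n f(z) = \sum_{(m_1,\dots,m_n)} (-1)^{m_1+\cdots+m_n}\, \tilde\psi'(z)\, f\bigl(\tilde\psi(z)\bigr),
\]
where $\tilde\psi = \psi_{m_n,\ell}\circ\cdots\circ\psi_{m_1,\ell}$ runs over all admissible compositions, all of which send $D_1$ into $D_0$ by Lemma~\ref{lem:24gp,1} and the Epstein class property (Fact~\ref{fact:24ga,1}). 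Hence each term is holomorphic on $D_1$, real on the real line, and the sum defines an element of ${\cal F}_0$ once convergence is established.

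The next step is to take absolute values and apply Lemma~\ref{lem:24gp,3}, which gives
\[
|\tilde\psi'(z)| \;\le\; K_1\, \bigl|\tilde\psi(\tau_\ell^2 X_\ell, X_\ell)\bigr|
\]
uniformly in $z\in D_1$, in $\ell$ large or infinite, and in the length $n$ of the composition. Therefore
\[
|P_\ell^n f(z)| \;\le\; K_1\, \|f\|_{{\cal F}_0} \sum_{(m_1,\dots,m_n)} \bigl|\tilde\psi(\tau_\ell^2 X_\ell, X_\ell)\bigr|.
\]

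The key observation is then that the intervals $\tilde\psi(\tau_\ell^2 X_\ell, X_\ell)$ are precisely the domains of the cylinders of generation $n$ of the induced dynamics $\Phi_\ell$, and so are pairwise disjoint subsets of the inducing interval $(\tau_\ell^2 X_\ell, X_\ell)$. Consequently
\[
\sum_{(m_1,\dots,m_n)} \bigl|\tilde\psi(\tau_\ell^2 X_\ell, X_\ell)\bigr| \;\le\; \bigl|(\tau_\ell^2 X_\ell, X_\ell)\bigr|,
\]
and the right-hand side is bounded uniformly in $\ell$ because the parameters $\tau_\ell, X_\ell$ converge to their values at $\ell = \infty$. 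Combining these three estimates yields $\|P_\ell^n f\|_{{\cal F}_0} \le K_2 \|f\|_{{\cal F}_0}$ with $K_2 := K_1 \sup_{\ell}|(\tau_\ell^2 X_\ell, X_\ell)|$, valid for all $n$ and all $\ell$ sufficiently large or infinite.

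There is no real obstacle here; the only point requiring a moment's care is the disjointness of the cylinders, which follows from the fact that $\Phi_\ell^n$ is injective on each cylinder so that distinct itineraries $(m_1,\dots,m_n)$ give disjoint images under $\tilde\psi$. Everything else is a direct combination of Lemma~\ref{lem:24gp,3} with the Koebe-type distortion control it encodes and the elementary disjointness/total-length bound.
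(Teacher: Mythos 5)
Your proof is correct and follows essentially the same route as the paper: iterate the transfer operator, apply the uniform derivative bound of Lemma~\ref{lem:24gp,3} to each composed inverse branch, and use the disjointness of the generation-$n$ cylinders of $\Phi_\ell$ inside the inducing interval to bound the total length uniformly in $n$ and $\ell$. The only cosmetic difference is that the paper bounds the total length by $|p_0|$ while you use $|(\tau_\ell^2 X_\ell, X_\ell)|$ together with parameter convergence; both give the same uniform constant.
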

\begin{proof}
The transfer operator for iterates is given by 
\[ P^n_{\ell} f(z) = \sum_{m_1,\cdots,m_n} (-1)^{\sum
  m_k}\left(\psi_{m_n,\ell}\circ\cdots\circ\psi_{m_1,\ell}\right)'(z)\cdot 
\left( f\circ\psi_{m_n,\ell}\circ\cdots\circ\psi_{m_1,\ell}(z) \right) \; .\]
By Lemma~\ref{lem:24gp,3}, 
\[ \| P^n_{\ell} f \| \leq K_1 \|f\| \sum_{m_1,\cdots,m_n} \left|
\psi_{m_n,\ell}\circ\cdots,\circ\psi_{m_1,\ell}(\tau_{\ell}^2X_{\ell},X_{\ell})
\right| \; .\] 

Segments $\psi_{m_n,\ell}\circ\cdots,\circ\psi_{m_1,\ell}(\tau_{\ell}^2X_{\ell},X_{\ell})$
are domains of branches of $\Phi_{\ell}^n$, hence disjoint and
contained in $(\tau_{\ell}^2 X_{\ell},X_{\ell})$. Their sum is
uniformly bounded by $|p_0|$ and so we get the claim. 
\end{proof}

\paragraph{Convergence of densities.}
We are ready to prove our main result for this section, namely that 
\begin{prop}\label{prop:24gp,1}
As $\ell\rightarrow\infty$ invariant densities $f_{\ell}$ tend to
$f_{\infty}$ almost uniformly on
$(x_{0,\infty},\tau^{-2}_{\infty}x_{0,\infty})$. 
\end{prop}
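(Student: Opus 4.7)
The plan is to apply classical spectral perturbation theory to the transfer operators $P_\ell$ acting on ${\cal F}_0$. By Lemma~\ref{lem:24gp,4} each $P_\ell$ is compact, and by Fact~\ref{fact:24gp,2} together with the Folklore Theorem, $1$ is a simple isolated eigenvalue of $P_\ell$ whose normalised eigenvector is $f_\ell$. If I can establish operator-norm convergence $P_\ell\to P_\infty$ on ${\cal F}_0$, then the associated rank-one spectral projections $\Pi_\ell$ converge in norm to $\Pi_\infty$; after normalising by $\int f_\ell\,dx=1$, this yields $f_\ell\to f_\infty$ in ${\cal F}_0$, and in particular uniformly on $\overline{D_0}$. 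Since any compact subset of $(x_{0,\infty},\tau_\infty^{-2}x_{0,\infty})$ lies in $D_0$ (by choosing $I_0$ in Lemma~\ref{lem:24gp,2} sufficiently large), almost uniform convergence on the stated interval follows.

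The core technical task is therefore to show $\|P_\ell-P_\infty\|_{{\cal F}_0}\to 0$. Using~(\ref{equ:24gp,1}), I would decompose the difference according to the index $m$ into three groups: (a) even $m$, which contributes only to $P_\ell$ since $\psi_{m,\infty}$ is undefined for even $m$; (b) odd $m$ with $|m|>N$; and (c) odd $m$ with $|m|\leq N$. For (a), Lemma~\ref{lem:24gp,3} bounds the operator-norm contribution by $K_1$ times $\sum_{m \text{ even}}|\psi_{m,\ell}(\tau_\ell^2X_\ell,X_\ell)|$; these are pairwise disjoint subintervals of $(\tau_\ell^2X_\ell,x_{0,\ell})$, so their total length is at most $|x_{0,\ell}-\tau_\ell^2X_\ell|$, which tends to $0$ because $\tau_\infty^2X_\infty=x_{0,\infty}$. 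For (c), Fact~\ref{fact:24gp,1} yields $\psi_{m,\ell}\to\psi_{m,\infty}$ almost uniformly on $D_1$ for each fixed $m$, and Cauchy's estimates then upgrade this to uniform convergence of both $\psi_{m,\ell}$ and $\psi'_{m,\ell}$ on $\overline{D_0}$, so this finite sum tends to $0$ in operator norm. For (b), an analogous Lemma~\ref{lem:24gp,3} bound reduces matters to showing that $\sum_{|m|>N}|\psi_{m,\ell}(\tau_\ell^2X_\ell,X_\ell)|$ is small uniformly in large $\ell$ and in $\ell=\infty$, which follows by combining the global tiling bound (total length $\leq |p_0|$) with the localisation of high-$|m|$ branches near the critical point.

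The main obstacle I expect is the uniform tail estimate (b): individual branch lengths do not decay in a way uniform in $\ell$, so one must use the global tiling property together with Fact~\ref{fact:24gp,1} to obtain a tail bound depending only on $N$ for all $\ell$ sufficiently large and for $\ell=\infty$. Once operator-norm convergence is in hand, the convergence of invariant densities is immediate: any accumulation point of $\Pi_\ell{\bf e}$ in ${\cal F}_0$ must be a nonnegative invariant density of $P_\infty$, hence by uniqueness a positive multiple of $f_\infty$, so normalisation forces convergence to $f_\infty$ itself.
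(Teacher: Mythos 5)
Your plan — prove operator-norm convergence $\|P_\ell-P_\infty\|\to 0$ on ${\cal F}_0$ and then invoke spectral perturbation of a simple isolated eigenvalue — is a genuinely different route from the paper, which only ever establishes strong convergence ($P_\ell f\to P_\infty f$ for each fixed $f$, Lemma~\ref{lem:24gp,5}) and replaces spectral perturbation by a compactness-plus-uniqueness argument on the fixed points themselves. Your norm-convergence decomposition is essentially sound and in fact proves more than the paper does at that stage: block (a) works exactly for the reason you give (the even-$m$ images are disjoint subintervals of $(\tau_\ell^2X_\ell,x_{0,\ell})$, an interval collapsing to a point because $\tau_\infty^2X_\infty=x_{0,\infty}$); and for block (b) the clean way to get the uniform tail bound is to note that the union of the odd images with $m\le -N$ is, up to countably many points, the single interval $(x_{0,\ell},\psi_{-N,\ell}(\tau_\ell^2X_\ell))$, whose length is monotone decreasing in $N$ and, for each fixed $N$, converges as $\ell\to\infty$ by Fact~\ref{fact:24gp,1}; the crude bound by $|p_0|$ alone would not suffice.

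The genuine gap is in the spectral step. You assert that Fact~\ref{fact:24gp,2} plus the Folklore Theorem make $1$ a \emph{simple} isolated eigenvalue of $P_\ell$ on ${\cal F}_0$. The Folklore Theorem is an $L_1$ statement about nonnegative densities; it gives neither geometric nor algebraic simplicity of the eigenvalue $1$ for the compact operator on the Banach space ${\cal F}_0$, and it does not by itself tell you that the $L_1$ density $f_\ell$ continues to an element of ${\cal F}_0$ fixed by $P_\ell$ — that is the content of the paper's Lemma~\ref{lem:25ga,2}, which already requires the uniform power bound of Lemma~\ref{lem:25ga,1} together with compactness. To close your argument you must still supply: (i) absence of a Jordan block at $1$ (this does follow from $\sup_n\|P_\ell^n\|\le K_2$); (ii) geometric simplicity, e.g.\ by an ergodicity/sign argument showing a second fixed function in ${\cal F}_0$ would yield a second absolutely continuous invariant measure; (iii) the identification of the normalized eigenvector of $P_\ell$ with $f_\ell$. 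None of these is deep, but none is "immediate," and your final normalisation step tacitly uses all three. The paper's proof is structured precisely so that the only uniqueness ever invoked is the $L_1$ uniqueness of the nonnegative invariant density: it extracts a convergent subsequence of $(f_{0,\ell})$, passes to the limit in $P_\ell f_{0,\ell}=f_{0,\ell}$ using strong convergence and the bound $\|P_\ell\|\le K_2$, and identifies the limit with $f_{0,\infty}$ on the real interval. If you fill in (i)--(iii), your route is valid and yields the stronger by-product of norm convergence of the operators.
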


\begin{lem}\label{lem:24gp,5}
For every $f\in {\cal F}_0$
\[ \lim_{\ell\rightarrow\infty} P_{\ell} f = P_{\infty} f \; .\]
\end{lem}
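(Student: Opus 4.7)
The plan is to write $(P_\ell-P_\infty)f(z)$ as a formal series over branches and split it into three blocks: a finite head $\{m\mbox{ odd},\ -N\le m\le -1\}$, an odd-negative tail $\{m\mbox{ odd},\ m<-N\}$, and the even-$m$ block, which contributes only to $P_\ell$ since the range of $\phi_\infty$ is $(0,\tau_\infty^2)$ and admits only odd-negative branches. The uniform estimate driving everything is the consequence of Lemma \ref{lem:24gp,3}: the ${\cal F}_0$-norm of each branch contribution is bounded by $K_1\|f\|\cdot|\psi_{m,\ell}(\tau_\ell^2X_\ell,X_\ell)|$, and these images are pairwise disjoint subintervals of the inducing domain $(\tau_\ell^2 X_\ell,X_\ell)$.

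For the even-$m$ block, each image $\psi_{m,\ell}(\tau_\ell^2X_\ell,X_\ell)$ is $\phi_\ell^{-1}$ of a negative subinterval of the range $(-\infty,\tau_\ell X_\ell)$, hence lies inside $(\tau_\ell^2X_\ell,x_{0,\ell})$; since both endpoints converge to $x_{0,\infty}$ as $\ell\to\infty$ (Theorem 1 of~\cite{leswi:common}), the total length is $o(1)$ and the entire block has norm tending to zero. For the odd-negative tail, the images cluster at $\phi_\ell^{-1}(0^+)=x_{0,\ell}$ as $|m|\to\infty$ because $\tau_\ell^m\to 0$, and by the convergence of parameters this clustering is uniform in $\ell$ large; hence given $\epsilon>0$ one picks $N$ so that the tails of both $P_\ell f$ and $P_\infty f$ have norm less than $\epsilon$ for all $\ell$ sufficiently large.

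For the finite head, term-by-term convergence follows from Fact \ref{fact:24gp,1}. For each fixed odd negative $m$, $\tau_\ell^m D_1$ stays in a fixed compact subset of $\CC\setminus[\tau_\infty^2,\infty)$ for $\ell$ large, since $|\tau_\ell^m z|\le|p_0|\cdot|\tau_\ell|^{-|m|}<1<\tau_\infty^2$ for $z\in D_1$, using $|p_0|<|\tau_\ell|$ from the choice of $p_0$. Hence $\psi_{m,\ell}(z)=\phi_\ell^{-1}(\tau_\ell^m z)\to\psi_{m,\infty}(z)$ uniformly on $\overline{D_0}$, and Cauchy's integral formula on the strictly larger disk $D_1$ transfers convergence to the derivatives. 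Summing the finitely many head terms and combining with the two tail estimates yields $\|P_\ell f-P_\infty f\|<3\epsilon$ for $\ell$ large. The most delicate step is the vanishing of the even-$m$ block, since individual even branches can have diverging derivatives near $x_{0,\ell}$; it is precisely the image-length form of the bound in Lemma \ref{lem:24gp,3} that makes this work, rather than any pointwise estimate of derivatives.
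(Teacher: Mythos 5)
Your argument is correct and follows essentially the same route as the paper: uniform-in-$\ell$ operator-norm convergence of the branch series via the image-length bound of Lemma~\ref{lem:24gp,3}, combined with term-by-term convergence of the (finitely many retained) odd branches from Fact~\ref{fact:24gp,1}. Your explicit treatment of the even-$m$ block --- whose total contribution vanishes because all its images lie in $(\tau_\ell^2 X_\ell, x_{0,\ell})$, an interval shrinking to a point as $\ell\to\infty$ --- makes precise a detail that the paper's one-line proof leaves implicit.
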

\begin{proof}
Since the sum in~(\ref{equ:24gp,1}) converges in the operator norm, it
is sufficient to show that for each term $\psi'_{m,\ell}
f\circ\psi_{m,\ell}$ converge in ${\cal F}_0$. This is so since
$\psi_{m,\ell}$ converge to $\psi_{m,\infty}$ analytically, hence in
$C^1$, by Fact~\ref{fact:24gp,1}.
\end{proof}

\begin{lem}\label{lem:25ga,2}
For every $\ell$ sufficiently large or infinite, there is $f_{0,\ell}
\in{\cal F}_0$ which is a fixed point of $P_{\ell}$ and when restricted
to $(\tau_{\ell}^2X_{\ell},X_{\ell})$ it is equal to $f_{\ell}$
introduced by Fact~\ref{fact:24gp,2}.
\end{lem}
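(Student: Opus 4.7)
The plan is to produce $f_{0,\ell}$ via a Ces\`{a}ro--Yosida averaging argument in the Banach space ${\cal F}_0$, using the compactness supplied by Lemma~\ref{lem:24gp,4}, and then to identify it with the $L_1$ density $f_\ell$ on the overlap between the two function spaces. Let ${\bf e}$ be the constant function on $D_0$ normalized by $\int_{(\tau_\ell^2 X_\ell, X_\ell)} {\bf e}\,dx = 1$; it is a holomorphic element of ${\cal F}_0$ and simultaneously the constant density used in Fact~\ref{fact:24gp,2}. Since $(\tau_\ell^2 X_\ell, X_\ell) \subset (p_0, 0)$, Lemma~\ref{lem:24gp,2} places $\psi_{m,\ell}(x) \in I_0 \subset D_0$ for every real $x$ in the inducing interval, so formula~(\ref{equ:24gp,1}) yields the same real values whether $P_\ell {\bf e}$ is interpreted in ${\cal F}_0$ or in $L_1$; inductively the iterates $P_\ell^n {\bf e}$ produced by the two actions coincide on the overlap $I_0 \cap (\tau_\ell^2 X_\ell, X_\ell)$.

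Form the Ces\`{a}ro averages $F_N := \frac{1}{N}\sum_{n=0}^{N-1} P_\ell^n {\bf e}$. By Lemma~\ref{lem:25ga,1} the family $\{F_N\}$ is uniformly bounded in ${\cal F}_0$, while
\[ F_N - P_\ell F_N = \frac{1}{N}\bigl({\bf e} - P_\ell^N {\bf e}\bigr) \longrightarrow 0 \quad \mbox{in } {\cal F}_0. \]
Compactness of $P_\ell$ on ${\cal F}_0$ yields a subsequence along which $P_\ell F_{N_k}$ converges in ${\cal F}_0$ to some $f_{0,\ell}$; the display above then forces $F_{N_k} \to f_{0,\ell}$ as well, and continuity of $P_\ell$ gives $P_\ell f_{0,\ell} = f_{0,\ell}$. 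Meanwhile, in $L_1(\tau_\ell^2 X_\ell, X_\ell)$ we have $P_\ell^n {\bf e} \to f_\ell$ by Fact~\ref{fact:24gp,2}, so the $F_N$ converge to $f_\ell$ in $L_1$ as well; since convergence in ${\cal F}_0$ on $D_0$ dominates $L_1$ convergence on $I_0$, the two limits must agree on the overlap, giving $f_{0,\ell} = f_\ell$ there. To propagate the equality to the rest of the inducing interval I would set
\[ \tilde{f}_{0,\ell}(x) := \sum_m (-1)^m \psi'_{m,\ell}(x)\, f_{0,\ell}\bigl(\psi_{m,\ell}(x)\bigr), \qquad x \in (\tau_\ell^2 X_\ell, X_\ell); \]
this is well defined because $\psi_{m,\ell}(x) \in I_0$, agrees with $f_{0,\ell}$ on the overlap by the fixed-point equation, and substituting the identification $f_{0,\ell}(\psi_{m,\ell}(x)) = f_\ell(\psi_{m,\ell}(x))$ shows it equals $P_\ell f_\ell(x) = f_\ell(x)$.

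The point I expect to require care is the bookkeeping between the two normalizations --- coordinating the $C^0$ norm on $D_0$ with the $L_1$ density normalization on $(\tau_\ell^2 X_\ell, X_\ell)$, and making sure the subsequential ${\cal F}_0$ limit is not a spurious scalar multiple of $f_\ell$. Taking ${\bf e}$ to be an $L_1$ density on the inducing interval does the job, because on the real axis $(-1)^m \psi'_{m,\ell}(x) > 0$ (the signs of $\tau_\ell^m$ and $(-1)^m$ cancel while $(\phi_\ell^{-1})' > 0$), so $P_\ell$ preserves the total $L_1$ mass of non-negative densities throughout the averaging and the limit lies in the same scalar class as $f_\ell$.
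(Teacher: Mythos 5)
Your argument is correct and is built from the same three ingredients as the paper's proof --- the uniform bound $\|P_\ell^n\|\le K_2$ of Lemma~\ref{lem:25ga,1}, the compactness of $P_\ell$ from Lemma~\ref{lem:24gp,4}, and the $L_1$-convergence $P_\ell^n{\bf e}\to f_\ell$ from Fact~\ref{fact:24gp,2} --- so it is essentially the same proof. The only mechanical difference is that the paper dispenses with the Ces\`{a}ro averaging: it takes an accumulation point of the sequence $(P_\ell^n{\bf e})$ itself, notes that every accumulation point must coincide with $f_\ell$ on the inducing interval and hence, by the identity principle for holomorphic functions, all accumulation points agree in ${\cal F}_0$, so the full sequence converges and its limit is automatically fixed by $P_\ell$; your extra step propagating the identification from $I_0$ to all of $(\tau_\ell^2X_\ell,X_\ell)$ by one application of the transfer operator is a legitimate refinement of a point the paper passes over.
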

\begin{proof}
Consider the constant density ${\bf e}$ for which 
\[ P^n_{\ell}{\bf e} \rightarrow f_{\ell}\]
 in $L_1(\tau_{\ell}^2X_{\ell},X_{\ell})$. In
${\cal F}_0$ the sequence $(P^n_{\ell} {\bf e})$ is bounded by 
Lemma~\ref{lem:25ga,1} and so belongs to a compact set by
Lemma~\ref{lem:24gp,4}. Let $f_{0,\ell}$ be an accumulation point of
this sequence. On the segment $(\tau_{\ell}^2 X_{\ell},X_{\ell})$ it is
equal to $f_{\ell}$ in $L_1$. So any two accumulation points are
simply equal on this segment, and so equal in ${\cal F}_0$ as
holomorphic functions. It develops that $P^n_{\ell} {\bf e}
\rightarrow f_{0,\ell}$ and so $f_{0,\ell}$ is a fixed point of
$P_{\ell}$. 
\end{proof}

\paragraph{Proof of Proposition~\ref{prop:24gp,1}.}
The sequence $(f_{0,\ell})=(P_{\ell} f_{0,\ell})$ has a convergent subsequence in  
${\cal F}_0$. Let us call its limit $\tilde{f}$. Observe that
$P_{\infty}\tilde{f} = \tilde{f}$. Indeed, for any $\ell$ sufficiently large, 
\[ \| P_{\infty}\tilde{f}-\tilde{f} \| \leq \|
P_{\infty}\tilde{f}-P_{\ell}\tilde{f} \| + \|P_{\ell}\tilde{f} -
P_{\ell} f_{0,\ell} \| + \|P_{\ell} f_{0,\ell} - \tilde{f}\| \; .\] 
As $\ell$ tend to $\infty$ all three terms tend to $0$. For the first one
this follows by Lemma~\ref{lem:24gp,5}. The second one can be
estimated by $K_2 \|\tilde{f}-f_{0,\ell}\|$ by Lemma~\ref{lem:25ga,1}
and the final one is just $\|\tilde{f}-f_{0,\ell}\|$ again since
$f_{0,\ell}$ is a fixed point of $P_{\ell}$. 

But now $\tilde{f}$ restricted to $I_0$ is a real-valued function and
a uniform limit of similarly restricted $f_{0,\ell}$. By
Lemma~\ref{lem:25ga,2} $f_{0,\ell}$ is a density on
$(\tau_{\ell}^2X_{\ell},X_{\ell})$. It follows that $\tilde{f}$ is a
density on $(x_{0,\infty},\tau^{-2}_{\infty}
x_{0,\infty})$. Furthermore, since $\tilde{f}$ was a fixed point of
$P$ and all $\psi_{m,\infty}$ map $(x_{0,\infty},\tau^{-2}_{\infty}
x_{0,\infty})$ into itself, this density is invariant. By the uniqueness claim in
fact~\ref{fact:24gp,2} it is equal to $f_{0,\infty}$. Thus,
$\tilde{f}$ and $f_{0,\infty}$ obtained from Lemma~\ref{lem:25ga,2}
are equal on $(x_{0,\infty},\tau^{-2}_{\infty}
x_{0,\infty})$ and hence equal in ${\cal F}_0$. 

Recall that $\tilde{f}$ was the limit in ${\cal F}_0$ of an arbitrary
convergent subsequence of $(f_{0,\ell})$. Hence $f_{0,\ell}
\rightarrow f_{0,\infty}$ in ${\cal F}_0$ which implies almost uniform
convergence of $f_{\ell}$ to $f_{\infty}$ on $(x_{0,\infty},\tau^{-2}_{\infty}
x_{0,\infty})$. 

This ends the proof of Proposition~\ref{prop:24gp,1}. 
\section{The Drift}
\begin{defi}\label{defi:25ga,1}
Recall Definition~\ref{defi:24ga,1} of the induced dynamics $\Phi$ and
in particular the function $m(x,\ell)$. For any $\ell<\infty$ the {\em drift} is 
\[ \vartheta(\ell) :=  -\int_{\tau^2_{\ell}X_{\ell}}^{X_{\ell}} m(x,\ell) f_{\ell}(x)\;
dx \; .\]
\end{defi} 

Recall fact~\ref{fact:24gp,1} and the set $U$ defined there. Also, we
get a holomorphic function $f_{0,\infty}\in {\cal F}_0$ from
Lemma~\ref{lem:25ga,2}. $f_{0,\infty}$ is defined on the disk $D_0$ 
which contains $(x_{0,\infty},\tau^{-2}_{\infty}x_{0,\infty})$ in its
interior.

Here is the main theorem of this paper:
\begin{theo}\label{theo:25ga,1}
There is $\theta :\: [0,1) \rightarrow (\log\phi_{\infty})^{-1}(U) \cap D_0$, an arc which is piecewise smooth on every compact subset of $[0,1)$, and satisfies
\begin{enumerate}
\item
$\theta(0)=\tau_{\infty}^{-2}x_{0,\infty}$, $\lim_{t\rightarrow 1}\theta(1) = x_{0,\infty}$ and $\Im \theta(t) > 0$ for all $t\in(0,1)$,
\item
the image of $\theta$ and the segment
$(x_{0,\infty},\tau^{-2}_{\infty}x_{0,\infty}]$ are contained in
  simply connected open subset of $(\log\phi_{\infty})^{-1}(U) \cap D_0$,
\item
\[ \lim\sup_{t\rightarrow 1} \frac{|\Re \theta(t) - \Re x_{0,\infty}|}{|\theta(t)-x_{0,\infty}|^3\log|\theta(t)-x_{0,\infty}|^{-1}}
< \infty \; .\]
\end{enumerate}

Additionally, the formula
\[ \lim_{\ell\rightarrow\infty} \vartheta(\ell) = (\log|\tau_{\infty}|)^{-1}\lim_{\epsilon\rightarrow 0}
\;\Re\left[ \int_0^{1-\epsilon}
  \log\frac{\phi_{\infty}(\theta(t))}{\theta(t)} f_{0,\infty}(\theta(t)) \frac{d\theta}{dt} \,
dt \right]\; \]
holds. 
\end{theo}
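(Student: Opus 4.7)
The plan is to rewrite $\vartheta(\ell)$ as a complex contour integral whose integrand extends to a domain independent of $\ell$, deform the contour into the upper half-plane so as to avoid the zero of $\phi_\ell$, and then pass to the limit $\ell\to\infty$ using the results of Section~\ref{1}. The starting point is the relation $\phi_\ell(x) = \tau_\ell^{m(x,\ell)} \Phi_\ell(x)$, which gives
\[ m(x,\ell)\log|\tau_\ell| = \log|\phi_\ell(x)| - \log|\Phi_\ell(x)|, \]
and $\Phi_\ell$-invariance of $\mu_\ell = f_\ell\,dx$ yields $\int_J \log|\Phi_\ell(x)| f_\ell(x)\,dx = \int_J \log|y| f_\ell(y)\,dy$, so that
\[ \vartheta(\ell) = -\frac{1}{\log|\tau_\ell|} \int_{\tau_\ell^2 X_\ell}^{X_\ell} \log\Bigl|\frac{\phi_\ell(x)}{x}\Bigr| f_\ell(x)\,dx = -\frac{1}{\log|\tau_\ell|}\, \Re \int_{\tau_\ell^2 X_\ell}^{X_\ell} \log\frac{\phi_\ell(x)}{x}\, f_\ell(x)\,dx, \]
where on the right a single-valued holomorphic branch of $\log(\phi_\ell/\mathrm{id})$ is fixed via the inverse chart $(\log\phi_\ell)^{-1}:U\to\CC$ of Fact~\ref{fact:24gp,1}, using boundary values from the upper half-plane at the critical point $x_{0,\ell}$.

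Next I would extend the integrand holomorphically to the simply connected subset of $(\log\phi_\infty)^{-1}(U)\cap D_0$ of part~(ii). Fact~\ref{fact:24ga,1} and Fact~\ref{fact:24gp,1} supply the extension of $\log\phi_\ell$ through the inverse chart, while Lemma~\ref{lem:25ga,2} supplies the holomorphic extension $f_{0,\ell}\in{\cal F}_0$ of the density on $D_0$. Cauchy's theorem in this region lets me replace the real segment by the arc $\theta$ traversed in the opposite direction; the orientation reversal cancels the leading minus sign, so that for each $\epsilon>0$ and $\ell$ sufficiently large
\[ \vartheta(\ell) = \frac{1}{\log|\tau_\ell|}\,\Re \int_0^{1-\epsilon}\log\frac{\phi_\ell(\theta(t))}{\theta(t)}\, f_{0,\ell}(\theta(t))\,\theta'(t)\,dt \;+\; R(\ell,\epsilon), \]
where $R(\ell,\epsilon)$ collects the short connecting arcs between $\theta(1-\epsilon)$ and the real endpoint $\tau_\ell^2 X_\ell$, and between $\theta(0)$ and $X_\ell$. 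For each fixed $\epsilon$, Proposition~\ref{prop:24gp,1} gives $f_{0,\ell}\to f_{0,\infty}$ uniformly on the compact arc $\theta([0,1-\epsilon])$ and Fact~\ref{fact:24gp,1} gives the corresponding uniform convergence of $\log\phi_\ell\circ\theta$, so the truncated main term converges to its $\infty$-analogue.

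The hard part is the interchange of the limits $\ell\to\infty$ and $\epsilon\to 0$, i.e.\ controlling $R(\ell,\epsilon)$ uniformly in $\ell$. Near $\theta(1)=x_{0,\infty}$ the factor $\log\phi_\infty$ has a logarithmic singularity, and for finite $\ell$ the corresponding zero of $\phi_\ell$ sits close to but just inside the original interval of integration. Condition~(iii) is the quantitative input that saves the argument: because $|\Re\theta(t)-\Re x_{0,\infty}|$ is of order $|\theta(t)-x_{0,\infty}|^3/|\log|\theta(t)-x_{0,\infty}||$, the arc approaches $x_{0,\infty}$ almost vertically, so after taking $\Re$ the leading logarithmic contribution of $\log\phi_\ell(\theta(t))\,\theta'(t)$ integrates to something of order $\epsilon^3$, uniformly in $\ell$ large. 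Combined with the uniform bound on $\|f_{0,\ell}\|$ coming from Lemma~\ref{lem:25ga,1} via $f_{0,\ell}=\lim_n P_\ell^n{\bf e}$, this permits the limits to be exchanged and, after factoring $\log|\tau_\ell|\to\log|\tau_\infty|$, yields the formula of the theorem.
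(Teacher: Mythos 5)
Your opening steps are exactly the paper's: the identity $m(x,\ell)\log|\tau_\ell|=\log|\phi_\ell(x)|-\log|\Phi_\ell(x)|$ plus $\Phi_\ell$-invariance of $f_\ell\,dx$ is Lemma~\ref{lem:26ga,1}, and the holomorphic extension of the integrand via Fact~\ref{fact:24gp,1} and Lemma~\ref{lem:25ga,2}, followed by a contour deformation and a compact-convergence argument for the truncated integral, is also how the paper proceeds. The gap is in your last paragraph, which is where essentially all of the work in the paper actually lies.

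First, you treat the arc $\theta$ as given and invoke property~(iii) as ``quantitative input.'' But~(iii) is a conclusion of the theorem about an arc that must be \emph{constructed}; the paper builds $\theta$ dynamically as $\bigcup_n {\cal C}_{n,\ell}$ with ${\cal C}_{n,\ell}=\hat G_\ell^{-n+1}({\cal C}_{1,\ell})$, pullbacks of a short initial segment under the inverse of the associated map $\hat G_\ell$, and~(iii) is then derived from the Fatou coordinate of the limiting parabolic map. Second, and more seriously, your claimed mechanism for bounding the tail uniformly in $\ell$ does not work. Since $\phi_\ell=E^\ell$, near its zero one has $\log\phi_\ell(z)\approx \ell\log E(z)$, i.e.\ the coefficient of the logarithmic singularity at $x_{0,\ell}$ blows up with $\ell$ (equivalently, the constants $K(\ell)$ of Lemma~\ref{lem:26ga,2} degenerate as the fixed point $x_{0,\ell}$ of $\hat G_\ell$ becomes parabolic). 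Hence no estimate of $R(\ell,\epsilon)$ based only on the limiting geometry of $\theta$ near $x_{0,\infty}$ --- in particular not your ``$O(\epsilon^3)$ uniformly in $\ell$'' claim --- can be uniform in $\ell$. The paper's actual mechanism is different: on the $n$-th pullback arc the functional equation $\hat\phi_\ell\circ\hat G_\ell=\tau_\ell^{-2}\hat\phi_\ell$ gives $\log\hat\phi_\ell(z)=2(n-1)\log|\tau_\ell|+O(1)$, so the dangerous term in $s_{n,\ell}$ is $2n\log|\tau_\ell|\,\Re(z_{n+1,\ell}-z_{n,\ell})$, and one needs the uniform estimates $|z_{n+1,\ell}-z_{n,\ell}|\leq Q_2 n^{-3/2}$ and $|\Re(z_{n+1,\ell}-z_{n,\ell})|\leq Q_3 n^{-5/2}\log n$ for the family $\hat G_\ell^{-1}$ degenerating from hyperbolic to parabolic. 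Establishing these is Theorem~\ref{theo:21ga,1} and the whole of Section~4 (the pre-Fatou coordinate analysis with the parameter $\eta(\ell)\to 0$), none of which is present, even in outline, in your proposal.
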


\begin{coro}\label{coro:5hp,1}
Let $W$ be any open neighborhood of
$[x_{0,\infty},\tau^{-2}_{\infty}x_{0,\infty}$. When $\epsilon_0$ is small enough, the arc
$[x_{0,\infty},\tau^{-2}_{\infty}x_{0,\infty}]+\theta$ is a Jordan
curve which is homotopically trivial in $(\log\phi_{\infty})^{-1}(U)
\cap W$. 
\end{coro}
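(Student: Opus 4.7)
}
The plan is to combine three ingredients from Theorem~\ref{theo:25ga,1}: the Jordan-curve property of the concatenated contour, the null-homotopy obtained by placing it inside the simply connected superset of item (2), and a shrinking argument that forces the enclosed disk into the prescribed neighborhood $W$, using item (3) to control the behavior at $x_{0,\infty}$.

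First I would verify that the concatenation is Jordan and null-homotopic in the larger set. By item (1), setting $\theta(1):=x_{0,\infty}$ extends $\theta$ continuously to $[0,1]$ with endpoints matching those of the real segment $[x_{0,\infty},\tau_\infty^{-2}x_{0,\infty}]$ and with $\Im\theta(t)>0$ for $t\in(0,1)$. Being an arc, $\theta$ is injective, and it is disjoint from the open real segment, so the concatenation $\gamma:=\theta\cup[x_{0,\infty},\tau_\infty^{-2}x_{0,\infty}]$ is a simple closed curve. By item (2), $\gamma\subset V$ for some simply connected open $V\subset(\log\phi_\infty)^{-1}(U)\cap D_0$. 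Every Jordan curve in a planar simply connected open set bounds a topological disk contained in that set (its bounded complementary component is contained in $V$), so $\gamma$ is null-homotopic in $V$, and hence in $(\log\phi_\infty)^{-1}(U)\cap D_0$.

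To upgrade this null-homotopy to one inside $(\log\phi_\infty)^{-1}(U)\cap W$, the bounded component of $\CC\setminus\gamma$ must also sit inside $W$. Here the unspecified parameter $\epsilon_0$ enters as a scaling controlling how tightly $\theta$ hugs the real segment, for example by replacing $\theta$ by a homothetic copy about $x_{0,\infty}$ of small ratio, or by rescaling its imaginary part. Item (3) ensures that the scaled arc still approaches $x_{0,\infty}$ without tangency or self-crossing, since the real-part deviation satisfies $|\Re\theta(t)-\Re x_{0,\infty}|=O\bigl(|\theta(t)-x_{0,\infty}|^3\log|\theta(t)-x_{0,\infty}|^{-1}\bigr)$. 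For $\epsilon_0$ small enough, the scaled $\gamma$ lies in an arbitrarily thin tubular neighborhood of the real segment, and the region it encloses is squeezed between $\theta$ and the segment, hence also contained in $W$. Combined with the previous paragraph, $\gamma$ bounds a disk in $(\log\phi_\infty)^{-1}(U)\cap W$, yielding the desired null-homotopy.

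The main obstacle is the third step: one must verify that the scaled arc still satisfies items (2) and (3), i.e.\ that it remains inside a simply connected subset of $(\log\phi_\infty)^{-1}(U)\cap D_0$. Since this ambient set is open and contains the interior of the real segment, a sufficiently thin tubular neighborhood of the segment lies inside it automatically, so any sufficiently small scaling is safe in the interior. The delicate point is the behavior at the endpoints $x_{0,\infty}$ and $\tau_\infty^{-2}x_{0,\infty}$, where $\log\phi_\infty$ has branch singularities that dictate the shape of $(\log\phi_\infty)^{-1}(U)$ and from which $U$ excludes a ray; this is precisely what the cubic approach control of item (3) is designed to accommodate, guaranteeing that the shrunk arc stays inside the preimage of $U$ near the critical endpoint.
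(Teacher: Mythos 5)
Your skeleton agrees with the paper's: the concatenation is a Jordan curve by item (1) of Theorem~\ref{theo:25ga,1}, the bounded component of its complement lies inside the simply connected set $(\log\phi_{\infty})^{-1}(U)$ (or the set $V$ of item (2)), and the whole content of the corollary is to force that bounded component into $W$. Those first two steps are correct as you wrote them.

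The gap is in how you realize ``$\epsilon_0$ small enough.'' You propose to shrink a \emph{fixed} arc $\theta$, either by a homothety about $x_{0,\infty}$ or by squeezing its imaginary part. The homothety is simply wrong: it moves the other endpoint off $\tau_{\infty}^{-2}x_{0,\infty}$, so the concatenation with the full real segment is no longer a closed curve. The vertical squeeze keeps the endpoints but produces an arc that is no longer one of the contours constructed in the proof of Theorem~\ref{theo:25ga,1}, and you give no argument that it stays inside $(\log\phi_{\infty})^{-1}(U)$ near $x_{0,\infty}$ --- which is exactly the hard point, since the shape of that set near $x_{0,\infty}$ is governed by the parabolic dynamics of $\hat{G}_{\infty}$ and is the reason the contour had to be built dynamically (as $\bigcup_n \hat{G}_{\infty}^{-n+1}({\cal C}_{1,\infty})$) rather than drawn by hand. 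Item (3) is a \emph{property} of the constructed $\theta$, not a license to deform it; invoking it for the squeezed arc is circular. The intended mechanism is different and sidesteps this entirely: $\epsilon_0$ parametrizes the choice of the seed point $z_0$ in the construction of $\theta$, which Proposition~\ref{prop:26gp,1} allows to be taken arbitrarily close to $0$ with argument near $\pi/2$. The resulting arc is again an instance of the construction, hence automatically in the domain of $\log\hat{\phi}_{\infty}$ by the functional equation~(\ref{equ:26ga,3}), and in the integrated-density coordinate it lies in an $O(|z_0|)$-neighborhood of the real segment: the initial chord from $1$ to $z_0$ has imaginary part at most $|z_0|$, and all subsequent arcs ${\cal C}_{n,\infty}$ have modulus at most comparable to $|z_0|$ since $|\hat{G}_{\infty}^{-n}(z_0)|$ decreases. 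Taking $|z_0|$ small therefore puts the entire Jordan curve, and with it the bounded component of its complement, inside $W$; your concluding step then applies verbatim.
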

\begin{proof}
Clearly, the closed curve mentioned here is a Jordan curve by
Theorem~\ref{theo:25ga,1}.  For $\epsilon_0$ small enough, the bounded
connected component of its complement is contained in $W$, but then
it is also contained in $(\log\phi_{\infty})^{-1}(U)$ which is simply
connected.
\end{proof}

\paragraph{Integrated density as a coordinate.}
Since $f_{0,\ell}$ is a positive density on
$(x_{0,\ell},\tau^{-2}_{\ell}x_{0,\ell})$, 
\[\mu(x) = \int_{x_{0,\ell}}^x f_{0,\infty}(z)\, dz\]

defines a univalent map from a neighborhood of the interval
$[x_{0,\ell},\tau^{-2}_{\infty} x_{0,\ell}]$ contained in $D_0$ onto a neighborhood of
$[0,1]$. This coordinate not only normalizes that inducing domain in a
way independent of $\ell$, but also internalizes that invariant
density factor in integration. 

Fix a neighborhood $W'$ of $[0,1]$. Then for $\ell$ large enough or
infinite, $\mu^{-1}_{\ell}$ is defined on $W'$. Let $W'_{\ell} =
\mu^{-1}_{\ell}(W_0)$.  Since $\mu_{\ell}$ is defined on $W'_{\ell}$,
we must have $W'_{\ell} \subset D_0$. 

\begin{defi}\label{defi:5hp,1}
Let $\ell$ be finite and large enough or infinite, as specified
above. 
Introduce a function $\hat{\phi}_{\ell} := \phi_{\ell}\circ
\mu_{\ell}^{-1}$. Also, define set $W_{\ell}$ to be the connected
component of the intersection of $\mu_{\ell}\circ
(\log\phi_{\ell}^{-1}(U) \cap W'$. 
\end{defi}

Surely, $W_{\ell}$ is contained in the domain of
$\hat{\phi}_{\ell}$. It is also simply connected, because connected
components of the intersection of two simply connected sets
$W'_{\ell}$ and $(\log\phi_{\ell})^{-1}(U)$ are simply connected. 

Theorem~\ref{theo:25ga,1} preserves its meaning in the new coordinate 
and the main integral formula becomes

\begin{equation}\label{equ:5hp,1}
 \lim_{\ell\rightarrow\infty} \vartheta(\ell) = \lim_{\epsilon\rightarrow 0}
\; (\log|\tau_{\infty}|)^{-1}\Re\left[ \int_0^{1-\epsilon}
  \log\frac{\hat{\phi}_{\infty}(\theta(t))}{\mu^{-1}(\theta(t))} \frac{d\theta}{dt} \,
dt \right] \; .
\end{equation}

\subsection{A few identities.}
The proof of Theorem~\ref{theo:25ga,1} is based among other things on
a several identities which will be given now. 

\paragraph{Associated dynamics.}
For every $\ell$ we define {\em associated maps} 
\begin{equation}\label{equ:26ga,2}
\begin{array}{ccc}
G_{\ell} & = & \tau_{\ell}^{-1} \phi_{\ell} \tau_{\ell}^{-1} \\
\Gamma_{\ell} & = & \tau_{\ell} \phi_{\ell} \tau_{\ell}^{-2} \\
\end{array}
\end{equation}

Their relevant properties will now be listed.
\begin{fact}\label{fact:26ga,2}
\begin{itemize}
\item
For each $\ell$, $G_{\ell}=\Gamma_{\ell}\circ\Gamma_{\ell}$ and $\Gamma_{\ell}$ fixes $\tau^2_{\ell}X_{\ell}$
  with a negative derivative. Also, the functional equation 
\[ \phi_{\ell} \circ G_{\ell} = \tau^{-2}_{\ell} \phi_{\ell} \]
holds. 
\item
For any finite $\ell$ point $x_{0,\ell}$ is an attracting fixed point
of $G_{\ell}$ and $\tau^2_{\ell} X_{\ell}$ is a repelling fixed point.
\item
For $\ell=\infty$, $x_{0,\ell}$ is a neutral point for $G_{\infty}$
with the expansion 
\[ G_{\infty}(z) = z - c (z-x_{0,\infty})^3 + O(|z|^4) \]
for $c>0$. 
\end{itemize}
\end{fact}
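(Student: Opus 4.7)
The plan is to establish the three bullets in order, using the fixed point equation~(\ref{equ:27up,1}), the Epstein factorization $\phi_\ell=E_\ell^\ell$ from Fact~\ref{fact:24ga,1}, and the convergence statement of Fact~\ref{fact:24gp,1}.

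For the first bullet I would proceed by direct algebra. Writing $\Gamma_\ell\circ\Gamma_\ell(x)=\tau_\ell\phi_\ell(\tau_\ell^{-1}\phi_\ell(\tau_\ell^{-2}x))$ and substituting $y=\tau_\ell^{-2}x$ into the fixed point equation yields $\tau_\ell\phi_\ell\tau_\ell^{-1}\phi_\ell(y)=\tau_\ell^{-1}\phi_\ell\tau_\ell(y)$, so $\Gamma_\ell\circ\Gamma_\ell(x)=\tau_\ell^{-1}\phi_\ell\tau_\ell^{-1}(x)=G_\ell(x)$. The functional equation $\phi_\ell\circ G_\ell=\tau_\ell^{-2}\phi_\ell$ is the same identity rearranged (substitute $y=\tau_\ell^{-1}x$ and multiply by $\tau_\ell^{-1}$). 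The point $\tau_\ell^2 X_\ell$ is fixed by $\Gamma_\ell$ because $\phi_\ell(X_\ell)=\tau_\ell X_\ell$, and $\Gamma'_\ell(\tau_\ell^2X_\ell)=\tau_\ell^{-1}\phi'_\ell(X_\ell)$ is negative since $\tau_\ell<-1$ and $\phi_\ell$ is increasing on $(x_{0,\ell},\tau_\ell x_{0,\ell})$.

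For the second bullet I would insert $\phi_\ell=E_\ell^\ell$ into the functional equation and extract $\ell$-th roots on the positive branch, obtaining $E_\ell\circ G_\ell=\tau_\ell^{-2/\ell}E_\ell$. Since $E_\ell$ is a diffeomorphism vanishing at $x_{0,\ell}$, this realizes $G_\ell$ near $x_{0,\ell}$ as a smooth conjugate of the linear contraction $z\mapsto\tau_\ell^{-2/\ell}z$; in particular $G_\ell(x_{0,\ell})=x_{0,\ell}$ with multiplier $\tau_\ell^{-2/\ell}$, of modulus strictly less than one because $|\tau_\ell|>1$, so $x_{0,\ell}$ is attracting. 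For the repelling fixed point, $G'_\ell(\tau_\ell^2X_\ell)=\Gamma'_\ell(\tau_\ell^2X_\ell)^2=\tau_\ell^{-2}\phi'_\ell(X_\ell)^2$; the identity $\phi'_\ell=\ell E_\ell^{\ell-1}E'_\ell$ together with $E_\ell(X_\ell)^\ell=\tau_\ell X_\ell$ forces $|\phi'_\ell(X_\ell)|$ to grow linearly in $\ell$, so the multiplier exceeds one in modulus and $\tau_\ell^2X_\ell$ is repelling. (Alternatively, since $G_\ell$ maps $(\tau_\ell^2X_\ell,x_{0,\ell})$ strictly inside itself with $x_{0,\ell}$ attracting, repulsion at the other endpoint is forced.)

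For the third bullet, neutrality of $x_{0,\infty}$ is an immediate consequence of Fact~\ref{fact:24gp,1}: $G_\ell$ and $G'_\ell$ converge locally uniformly to $G_\infty$ and $G'_\infty$, and $G'_\ell(x_{0,\ell})=\tau_\ell^{-2/\ell}\to 1$. For the cubic expansion I would recast the functional equation logarithmically: defining $\Phi(x):=-\log\phi_\infty(x)/(2\log|\tau_\infty|)$, the identity $\log\phi_\infty\circ G_\infty=\log\phi_\infty-2\log|\tau_\infty|$ becomes $\Phi\circ G_\infty=\Phi+1$, so $\Phi$ is a Fatou coordinate at the parabolic point $x_{0,\infty}$. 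Substituting an ansatz $G_\infty(x_{0,\infty}+y)=x_{0,\infty}+y-cy^3+O(y^4)$ and comparing leading terms, the cubic form with $c>0$ is equivalent to the asymptotic $\Phi(x_{0,\infty}+y)\sim 1/(2c\,y^2)$ as $y\to 0^+$, i.e.\ to $(\log\phi_\infty)^{-1}(v)-x_{0,\infty}\sim\sqrt{\log|\tau_\infty|/(c\,|v|)}$ as $v\to-\infty$ along the real axis. Establishing this precise square-root boundary asymptotic, with a definite positive constant, for the univalent map $(\log\phi_\infty)^{-1}:U\to D_0$ at its prime end $v=-\infty$ is the main technical hurdle; since $U=\CC\setminus[\log\tau_\infty^2,\infty)$ is a slit plane and $-\infty$ is a conformally smooth boundary point, I expect a Koebe-type distortion argument applied to the $\ell$-uniform family $(\log\phi_\ell)^{-1}$ of Fact~\ref{fact:24gp,1}, combined with the Epstein control of Fact~\ref{fact:24ga,1}, to yield both the correct exponent and the sign $c>0$, with the step of ruling out higher-order cancellations at $x_{0,\infty}$ (which would force a quintic or higher local form) being the most delicate part.
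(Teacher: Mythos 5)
Your algebra for the first bullet is correct and is essentially how the paper handles it (its entire justification there is the one line that ``the identities follow from the functional equation $\phi_{\ell}=\tau_\ell^2 \phi_{\ell} \tau_\ell^{-1} \phi_{\ell}  \tau_\ell^{-1}$''), and your Epstein-root argument $E_\ell\circ G_\ell=\tau_\ell^{-2/\ell}E_\ell$ for the attracting fixed point is clean — it is the same computation the paper records for unimodal maps in Fact~\ref{assunipro}. Be aware, however, that the paper does not prove the second and third bullets at all: it imports them from Theorem 3 and Lemma 2.1 of~\cite{leswi:common}. It is precisely in those two places that your proposal has genuine gaps.

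For repulsion at $\tau_\ell^2X_\ell$ you need $\tau_\ell^{-2}\phi_\ell'(X_\ell)^2>1$, i.e.\ $|\phi_\ell'(X_\ell)|>|\tau_\ell|$, and neither of your arguments delivers this. The ``grows linearly in $\ell$'' estimate — besides requiring uniform lower bounds on $\tau_\ell X_\ell E_\ell'(X_\ell)/E_\ell(X_\ell)$ that you have not established — is an asymptotic statement, whereas the Fact asserts repulsion for \emph{every} finite odd $\ell\ge 3$; and the topological alternative only shows the boundary fixed point is not attracting from inside the interval, which cannot exclude a parabolic point with multiplier exactly $1$ (the multiplier is a square, hence positive, so ``orientation'' gives no help). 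For the third bullet, the vanishing of the quadratic term is indeed automatic ($G_\infty=\Gamma_\infty\circ\Gamma_\infty$ with $\Gamma_\infty'(x_{0,\infty})=-1$, since for $\ell=\infty$ the points $\tau_\infty^2X_\infty$ and $x_{0,\infty}$ coincide), but the substance of the claim is that the leading correction is \emph{exactly} cubic with $c>0$, and your proposal, as you concede, establishes neither: the reformulation $\Phi\circ G_\infty=\Phi+1$ is just the functional equation in disguise, and it is equally consistent with a degenerate parabolic point of order $5,7,\dots$, for which the Fatou coordinate would behave like $y^{-4},y^{-6},\dots$ rather than $y^{-2}$; pinning down the order and the sign requires the boundary analysis of $\phi_\infty$ near $x_{0,\infty}$ carried out in~\cite{leswi:common}. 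So either reproduce that analysis or cite it as the paper does; as written, the second and third bullets are not proved.
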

The identities follow from the functional equation                     
$\phi_{\ell}=\tau_\ell^2 \phi_{\ell} \tau_\ell^{-1} \phi_{\ell}  \tau_\ell^{-1}$. 
The other claims are contained in Theorem 3 and Lemma 2.1 of~\cite{leswi:common}.

We will use the associated dynamics in the integrated density
coordinate, for example 
\begin{equation}\label{equ:26gp,3} {\hat \Gamma_{\ell}}
:=\mu_{\ell}\circ\Gamma_{\ell}\circ\mu_{\ell}^{-1}\; .
\end{equation}

Claims of Fact~\ref{fact:26ga,2} have obvious reformulations in the new
coordinate. We will just quote one:
\begin{equation}\label{equ:26ga,3}
\hat{\phi}_{\ell}\circ\hat{G}_{\ell} = \tau_{\ell}^{-2}\hat{\phi}_{\ell} 
\end{equation}

where $\hat{\phi}_{\ell}$ is introduced by Definition~\ref{defi:5hp,1}.  
 
\begin{lem}\label{lem:26ga,2}
For every finite $\ell$ there exist positive constants $K(\ell)$ and
$\epsilon(\ell)$ such that 
\[ \begin{array}{ccc}
\forall |z-x_{0,\ell}|<\epsilon(\ell)\; & |\log \hat{\phi}_{\ell}(z)| & \leq K(\ell) \log |z-x_{0,\ell}|^{-1}\\
\forall |z-\tau^2_{\ell}X_{\ell}|<\epsilon(\ell)\; & |\log
\hat{\phi}_{\ell}(z)| & \leq K(\ell) \log
|z-\tau^2_{\ell}X_{\ell}|^{-1} \; .
\end{array} \]
\end{lem}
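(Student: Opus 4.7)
The plan is to isolate the two singularities of $\log\phi_\ell$ using the Epstein factorization $\phi_\ell = E^\ell$ from Fact~\ref{fact:24ga,1}, derive the required bounds for $\phi_\ell$ in the original coordinate, and then transfer them to $\hat\phi_\ell$ via the integrated-density change of coordinate $\mu_\ell$, which is a local biholomorphism at both relevant points. Because the constants $K(\ell)$ and $\epsilon(\ell)$ are allowed to depend on $\ell$, the analysis reduces to controlling the local orders of vanishing and blow-up at the two endpoints, with no uniformity required across $\ell$.

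Near $x_{0,\ell}$, the map $E$ is a diffeomorphism with $E(x_{0,\ell}) = 0$ and $E'(x_{0,\ell}) \neq 0$ admitting a univalent holomorphic extension to a complex neighborhood, so $|E(z)| \leq C_\ell |z - x_{0,\ell}|$ there and $|\phi_\ell(z)| \leq C_\ell^\ell |z - x_{0,\ell}|^\ell$. On a simply connected slit neighborhood, a single-valued branch of $\log\phi_\ell$ has bounded argument, hence $|\log \phi_\ell(z)| \leq \ell\log|z - x_{0,\ell}|^{-1} + O_\ell(1)$; shrinking the neighborhood so that $\log|z - x_{0,\ell}|^{-1} \geq 1$ absorbs the additive term into a slightly larger multiplicative constant. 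Near $\tau_\ell^2 X_\ell$ the argument is analogous: $E(z) \to -\infty$ along the real interval as $z \to \tau_\ell^2 X_\ell$, and since $E$ lies in the Epstein class its natural meromorphic extension has a pole of some finite order $p \geq 1$ at this point, giving $|E(z)| \leq C'_\ell |z - \tau_\ell^2 X_\ell|^{-p}$ and hence $|\log\phi_\ell(z)| \leq p\ell \log|z - \tau_\ell^2 X_\ell|^{-1} + O_\ell(1)$, absorbed into $K(\ell)\log|z-\tau_\ell^2 X_\ell|^{-1}$ on a small enough neighborhood. Finally, $\mu_\ell$ is biholomorphic at both points, hence bi-Lipschitz on a small neighborhood, so distances in the $\mu$-coordinate and the original coordinate differ by at most a factor $C_\ell''$, and the bounds transfer to $\hat\phi_\ell = \phi_\ell \circ \mu_\ell^{-1}$ with a revised $K(\ell)$.

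The main technical point is confirming the polynomial blow-up order of the Epstein-class diffeomorphism $E$ at the boundary point $\tau_\ell^2 X_\ell$. This should be standard: Epstein-class maps are built from Möbius transformations and square roots, all of which can at worst contribute a finite-order pole at a boundary singularity of the real interval. Once this is in hand, the rest of the argument is a routine analysis of $\log$ near zeros and poles of finite order, combined with Koebe distortion control for the holomorphic extension of $E$.
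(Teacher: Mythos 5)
Your treatment of the vanishing endpoint $x_{0,\ell}$ is essentially sound (and is a legitimate alternative to the paper's argument there): $E$ is univalent on a complex neighborhood of $x_{0,\ell}$ with a simple zero, so $|E(z)|\asymp|z-x_{0,\ell}|$ two-sidedly and $\bigl|\log|\phi_\ell(z)|\bigr|\le \ell\log|z-x_{0,\ell}|^{-1}+O_\ell(1)$; do note that you only wrote the upper bound $|E(z)|\le C_\ell|z-x_{0,\ell}|$, whereas bounding $|\log\phi_\ell|$ from above requires the matching lower bound on $|E|$ as well.

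The genuine gap is at $\tau_\ell^2X_\ell$, exactly the point you flag as "the main technical point." The Epstein class does \emph{not} mean that $E$ is built from M\"obius maps and roots, and it does not provide a meromorphic extension with a finite-order pole at $\tau_\ell^2X_\ell$. It only says that $E^{-1}$ extends univalently to the plane slit along the complement of the range; a bounded univalent map on a slit plane can approach a boundary value arbitrarily slowly at the prime end corresponding to $w\to\infty$ (e.g.\ like $1/\log|w|$), which would make $|E(z)|$ grow exponentially in $|z-\tau_\ell^2X_\ell|^{-1}$ and destroy the logarithmic bound. The finiteness of the blow-up exponent is genuinely dynamical: the paper uses the functional equation $\hat\phi_\ell\circ\hat G_\ell=\tau_\ell^{-2}\hat\phi_\ell$ (Fact~\ref{fact:26ga,2} / formula~(\ref{equ:26ga,3})) together with the fact that $\tau_\ell^2X_\ell$ is a \emph{hyperbolic} repelling fixed point of $G_\ell$ for finite $\ell$, so that the number of iterates needed to push $z$ out to a fixed annulus is $\asymp\log|z-\tau_\ell^2X_\ell|^{-1}$, each iterate contributing exactly $\log\tau_\ell^2$ to $\log\hat\phi_\ell$. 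That hyperbolicity is where finiteness of $\ell$ enters; for $\ell=\infty$ the fixed point is parabolic and indeed $\log\phi_\infty(z)\sim C(z-x_0)^{-2}$, far worse than logarithmic. Any proof that does not use the hyperbolic fixed point (or equivalent information about the multiplier) cannot establish the claimed rate, so this step of your argument needs to be replaced by the dynamical one rather than patched.
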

The branch of the logarithm in $\log\hat{\phi}_{\ell}$ is chosen so
that the value if real on $(0,1)$.  
\begin{proof}
Both $x_{0,\ell}$ and $\tau_{\ell}^2 X_{\ell}$ are hyperbolic fixed
points for $\hat{G}_{\ell}$. For example, for $z$ in a neighborhood of $\tau^2_{\ell}
X_{\ell}$,  
\[ \log\hat{\phi}_{\ell}(z) = n \log\tau^2_{\ell} + \log\hat{\phi}_{\ell}(
  \hat{G}^n_{\ell}(z)) \; .\]
Choosing $G^n_{\ell}(z)$ in some fixed ring of outer radius $r$ and picking $1<\rho$ so
that it is smaller than $|\hat{G}_{\ell}'(z)|$ inside this ring, we get
\[ | \log\phi_{\ell}(z) | \leq   \frac{\log r/|z|}{\log
  \rho}\log\tau^2_{\ell} + L(\ell) \]
where $L(\ell)$ is a estimate on the ring.  The claim of the Lemma
follows and we proceed in the same way near $x_{0,\ell}$ using
$\hat{G}^{-1}_{\ell}$ instead of $\hat{G}_{\ell}$. 
\end{proof}

As a corollary:
\begin{coro}\label{coro:26ga,1}
For any $\ell<\infty$ function
$\log\hat{\phi}_{\ell}$ is integrable on any piecewise Lipschitz arc
in a neighborhood of $[0,1]$ intersected with the domain of
$\log\hat{\phi}_{\ell}$. 
\end{coro}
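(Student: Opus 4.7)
My plan is to combine Lemma~\ref{lem:26ga,2} with the standard fact that a logarithmic singularity is locally integrable along rectifiable arcs. First, I would observe that $\hat{\phi}_\ell=\phi_\ell\circ\mu_\ell^{-1}$ is holomorphic on its entire domain (an open set containing $[0,1]$) except at the isolated points $\mu_\ell(x_{0,\ell})$, where $\hat{\phi}_\ell$ has a zero of order $\ell$ inherited from the critical point of $\phi_\ell$, and $\mu_\ell(\tau_\ell^2 X_\ell)$, where $\hat{\phi}_\ell$ has a pole-like singularity coming from the Epstein-class extension in Fact~\ref{fact:24ga,1}. Consequently $\log\hat{\phi}_\ell$ is bounded on every compact subset of its domain, so the integral over any compact piece of a piecewise Lipschitz arc contained in that domain is automatically finite.

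What remains is to handle a tail of the arc $\gamma$ approaching one of the singular points $z_0$. Here Lemma~\ref{lem:26ga,2} supplies the explicit bound
\[
|\log \hat{\phi}_\ell(z)| \le K(\ell) \log |z-z_0|^{-1}
\]
on a punctured neighborhood of $z_0$. I would reparametrize the relevant tail by arc length $s$, with $\gamma(s)\to z_0$ as $s\downarrow 0$, and note that $|d\gamma/ds|=1$, so that the line integral over the tail is majorised by
\[
K(\ell)\int_0^{s_0} \log |\gamma(s)-z_0|^{-1}\, ds .
\]

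The main point to settle is the convergence of this last integral. The piecewise Lipschitz structure of $\gamma$ combined with injectivity forces each Lipschitz piece of $\gamma$ to meet $z_0$ only at a single parameter value, and the function $|\gamma(s)-z_0|$ is absolutely continuous in $s$ with bounded derivative; this prevents $|\gamma(s)-z_0|$ from decaying faster than a fixed positive power of $s$ on such a piece, so the integrand is dominated by a constant multiple of $\log(1/s)$, which is integrable on $[0,s_0]$. Summing the bounded central contribution with the finitely many convergent tail integrals at the singular points $\mu_\ell(x_{0,\ell})$ and $\mu_\ell(\tau_\ell^2 X_\ell)$ then yields absolute integrability of $\log\hat{\phi}_\ell$ along $\gamma$, completing the proof.
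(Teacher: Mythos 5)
Your argument is exactly the one the paper intends: Corollary~\ref{coro:26ga,1} is stated immediately after Lemma~\ref{lem:26ga,2} with no separate proof, the point being precisely that the two singularities of $\log\hat{\phi}_{\ell}$ near $[0,1]$ (the order-$\ell$ zero at the image of $x_{0,\ell}$ and the point corresponding to $\tau_{\ell}^2X_{\ell}$, where $|\phi_{\ell}|\to\infty$) are only logarithmic by that lemma, and a logarithmic singularity is integrable along the arc. Your decomposition into a compact part where $\log\hat{\phi}_{\ell}$ is bounded plus tails near the two singular points is the right reading.

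One step, however, is not justified as written: the claim that piecewise Lipschitz structure plus injectivity prevents $|\gamma(s)-z_0|$ from decaying faster than a fixed power of $s$ in the arc-length parametrization. That is false for a general injective rectifiable arc: a curve that zigzags (or spirals) so as to spend arc length of order $1/k^2$ at distances comparable to $e^{-k}$ from $z_0$ is $1$-Lipschitz in arc length and injective, yet $\int_0^{s_0}\log|\gamma(s)-z_0|^{-1}\,ds \gtrsim \sum_k 1/k=\infty$. Having $|r'(s)|\le 1$ with $r(0)=0$ bounds $r(s)$ from \emph{above} by $s$, not from below, so the domination by $\log(1/s)$ does not follow. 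The gap is harmless for the paper: under the natural reading of ``piecewise Lipschitz arc'' as a finite union of Lipschitz graphs the lower bound $|\gamma(s)-z_0|\gtrsim s$ does hold on each piece, and the only arcs to which the corollary is actually applied --- the segment $[0,1]$ in Lemma~\ref{lem:26ga,1} and the concatenations $\bigcup_n {\cal C}_{n,\ell}$, whose pieces shrink geometrically toward the endpoint since $0$ is a repelling fixed point of $\hat{G}_{\ell}$ for finite $\ell$ --- satisfy it. But as a statement about arbitrary injective Lipschitz parametrizations, the step you rely on needs this extra hypothesis made explicit.
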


\paragraph{Formula for the drift.}

\begin{lem}\label{lem:26ga,1}
For every finite $\ell$ 
\[ \vartheta(\ell) = -(\log|\tau_{\ell}|)^{-1}\Re \int_0^1
\log\frac{\hat{\phi}_{\ell}(x)}{\mu^{-1}_{\ell}(x)}\, dx\; .    \]
\end{lem}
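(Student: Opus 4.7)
The strategy is to express $m(x,\ell)$ algebraically as a logarithmic ratio, then exploit the $\Phi_\ell$-invariance of $f_\ell$ to simplify one of the resulting integrals, and finally pass to the $\mu_\ell$-coordinate.

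\textbf{Step 1: Invert the defining relation for $m$.} By Definition~\ref{defi:24ga,1}, $\Phi_\ell(x) = \tau_\ell^{-m(x,\ell)} \phi_\ell(x)$, i.e.\ $\phi_\ell(x) = \tau_\ell^{m(x,\ell)}\Phi_\ell(x)$. Since $|\tau_\ell|>1$, taking absolute values and logarithms and solving for $m$ gives the pointwise identity
\[
 m(x,\ell)\log|\tau_\ell| \;=\; \log|\phi_\ell(x)| - \log|\Phi_\ell(x)|.
\]
I multiply by $-f_\ell(x)$ and integrate over $J = (\tau^2_\ell X_\ell, X_\ell)$. Integrability is not an issue: $m\in L_p(J)$ for all $p>0$ (stated in the Introduction), $\log|\Phi_\ell|$ is bounded because $\Phi_\ell(J)\subset J$ sits in the negative reals bounded away from $0$ and $\infty$, and $\log|\phi_\ell|$ has only doubly-logarithmic singularities at the endpoints $x_{0,\ell}$ and $\tau^2_\ell X_\ell$ by Lemma~\ref{lem:26ga,2}.

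\textbf{Step 2: Apply $\Phi_\ell$-invariance of $f_\ell$.} Because $f_\ell\,dx$ is $\Phi_\ell$-invariant and $\log|\cdot|$ is bounded on $J$,
\[
 \int_J \log|\Phi_\ell(x)|\,f_\ell(x)\,dx \;=\; \int_J \log|y|\,f_\ell(y)\,dy.
\]
Combining with Step~1 yields
\[
 \log|\tau_\ell|\cdot\vartheta(\ell) \;=\; -\int_J \log\!\left|\frac{\phi_\ell(x)}{x}\right| f_\ell(x)\,dx.
\]

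\textbf{Step 3: Change to the $\mu_\ell$-coordinate and take $\Re\log$.} Using $y=\mu_\ell(x)$, with $dy=f_\ell(x)\,dx$ and $\mu_\ell(J)=[0,1]$ (adopting the normalization $\mu_\ell(x)=\int_{\tau_\ell^2 X_\ell}^x f_{0,\ell}$, which differs by a constant shift from the limit-friendly version used elsewhere), the integrand transforms according to $\phi_\ell(x)\mapsto\hat\phi_\ell(y)$ and $x\mapsto\mu_\ell^{-1}(y)$, giving
\[
 \log|\tau_\ell|\cdot\vartheta(\ell) \;=\; -\int_0^1 \log\!\left|\frac{\hat\phi_\ell(y)}{\mu_\ell^{-1}(y)}\right| dy.
\]
Since $\Re\log z = \log|z|$ whenever $z\neq 0$, and since along the real interval $(0,1)$ the branch of $\log(\hat\phi_\ell/\mu_\ell^{-1})$ specified in the statement has real part equal to $\log|\hat\phi_\ell/\mu_\ell^{-1}|$, the right-hand side equals $-\Re\int_0^1\log(\hat\phi_\ell(x)/\mu_\ell^{-1}(x))\,dx$ as required.

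The steps are computational; the only subtlety worth checking is that the change-of-variable domain is genuinely $[0,1]$ (which fixes the normalization of $\mu_\ell$ on $J$), and that the various integrals converge—both handled by Lemma~\ref{lem:26ga,2} and the boundedness of $J$. The real content of the identity is the replacement of the discrete counting function $m$ by the continuous logarithmic ratio $\log|\phi_\ell(x)/x|$ after exploiting invariance; this is what will enable the contour-deformation argument in Theorem~\ref{theo:25ga,1}.
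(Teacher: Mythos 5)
Your proposal is correct and follows essentially the same route as the paper: the paper applies the invariance identity $\int f\circ\Phi_\ell\,d\mu_\ell=\int f\,d\mu_\ell$ with $f(x)=\log|x|$ and expands $\Phi_\ell=\tau_\ell^{-m}\phi_\ell$, which is exactly your Steps 1--2 in a slightly different order, and then passes to the integrated-density coordinate as in your Step 3. Your added remarks on integrability (via Lemma~\ref{lem:26ga,2}) and on the normalization of $\mu_\ell$ so that $\mu_\ell(J)=[0,1]$ are consistent with what the paper uses implicitly.
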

\begin{proof}

Since $d\mu_{\ell}(x)$ is an invariant measure of the induced map
$\Phi_{\ell}$, see Definition~\ref{defi:24ga,1}, for any
complex-valued integrable function $f$ one gets

\[ \int_{\tau_{\ell}^2X_{\ell}}^{X_{\ell}} f\circ\Phi_{\ell}(x)\,
d\mu_{\ell}(x) = \int_{\tau_{\ell}^2X_{\ell}}^{X_{\ell}} f(x)\,
d\mu_{\ell}(x) \; .\]
Picking $f(x) = \log |x|$ and recalling 
$\Phi_{\ell}(x) = \tau^{-m_{\ell}(x)}\phi_{\ell}(x)$ leads to 
\[  \int_{\tau_{\ell}^2X_{\ell}}^{X_{\ell}} \left[ \log|\phi_{\ell}(x)|
  - \log|\tau_{\ell}| m(x) - \log |x|\right]\, d\mu_{\ell}(x) = 0\]
which results in 
\[   \log|\tau_{\ell}| \int_{\tau_{\ell}^2X_{\ell}}^{X_{\ell}}
m_{\ell}(x)\, d\mu_{\ell}(x) = \int_{\tau_{\ell}^2X_{\ell}}^{X_{\ell}}
\log\frac{|\phi_{\ell}(x)|}{|x|}\, d\mu_{\ell}(x)\; . \]
Given the Definition~\ref{defi:25ga,1} the left-hand side is just 
$ -\log |\tau_{\ell}|\vartheta(\ell)$. The right-hand side can be
expressed as
\[ \Re \int_0^1 \log \frac{\hat{\phi}_{\ell}(x)}{\mu_\ell^{-1}(x)}\, dx\]
in the integrated density coordinate which leads the formula of~(\ref{lem:26ga,1}). 
\end{proof}

\paragraph{An identity for the invariant density $f_{\ell}$.}
Observe that for every $\ell$ the mapping $\tau^{-2}\Gamma_{\ell}$ transforms the
interval $(\tau^2_{\ell}X_{\ell}, X_{\ell})$ onto a smaller interval.
Since $\tau_\ell^2 X_\ell$ is fixed by $\Gamma_\ell$ by Fact~\ref{fact:26ga,2},
$\tau_\ell^{-2}\Gamma_\ell(\tau_\ell^2 X_\ell)=X_\ell$. Then, since $\Gamma_\ell$ reverses the orientation,
$\tau_\ell^{-2}\Gamma_\ell(X_\ell)<X_\ell$. Also, $\tau_\ell^{-2}\Gamma_\ell(X_\ell)=
\tau_\ell^{-1}\phi_\ell(\tau_\ell^{-2}X_\ell)>\tau_\ell^{-1}\phi_\ell(\tau_\ell X_\ell)=\tau_\ell^2 X_\ell$.
\begin{lem}\label{lem:26gp,1}
There exists $\epsilon_3>0$ such that for all $\ell$ sufficiently
large and finite and
$x\in (\tau^2_{\ell}x_{\ell},\tau^2_{\ell}X_{\ell}+\epsilon_3)$ one gets
\[ f_{\ell}(\tau^{-2}_{\ell}\Gamma_{\ell}(x)) (\tau^{-2}_{\ell}\Gamma_{\ell})'(x) -
f_{\ell}(\Gamma_{\ell}(x))\Gamma'_{\ell}(x) = \tau_{\ell}^{-2}
f_{\ell}(\tau^{-2}_{\ell}(x))  \; .\]
\end{lem}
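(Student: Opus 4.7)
The strategy is to apply the invariance equation $f_\ell = P_\ell f_\ell$, in the holomorphic form on $\mathcal{F}_0$ supplied by Lemma~\ref{lem:25ga,2}, at the two arguments $w$ and $\tau_\ell^{-2}w$, and then to compare the two resulting series term by term. I set $w = \Gamma_\ell(x)$; since $\Gamma_\ell$ fixes $\tau_\ell^2 X_\ell$ with negative derivative, for $x \in (\tau_\ell^2 X_\ell, \tau_\ell^2 X_\ell + \epsilon_3)$ with $\epsilon_3$ small the point $w$ lies slightly to the left of $\tau_\ell^2 X_\ell$ and $\tau_\ell^{-2} w$ slightly to the left of $X_\ell$; both remain inside $(p_0,0) \subset (\tau_\ell^3,0)$, hence inside $D_0$ and inside the common domain of every branch $\psi_{m,\ell}$ indexed by the admissible set $M$ from Lemma~\ref{lem:24ga,1}.

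The algebraic key is the shift identity
\[
\psi_{m,\ell}(\tau_\ell^{-2}z) = \phi_\ell^{-1}(\tau_\ell^{m-2}z) = \psi_{m-2,\ell}(z),
\qquad
\psi_{m,\ell}'(\tau_\ell^{-2}z) = \tau_\ell^{2}\,\psi_{m-2,\ell}'(z),
\]
read off directly from $\psi_{m,\ell}(z) = \phi_\ell^{-1}(\tau_\ell^m z)$. Substituting into $f_\ell(\tau_\ell^{-2}w) = P_\ell f_\ell(\tau_\ell^{-2}w)$ and relabelling $m' = m - 2$ rewrites the invariance at $\tau_\ell^{-2}w$ as
\[
\tau_\ell^{-2} f_\ell(\tau_\ell^{-2} w) = \sum_{m' \in M - 2} (-1)^{m'}\, \psi_{m',\ell}'(w)\, f_\ell(\psi_{m',\ell}(w)).
\]
Lemma~\ref{lem:24ga,1} and the parity analysis in its proof identify $M = \{\text{even integers}\} \cup \{-1,-3,-5,\dots\}$, so $M - 2 = \{\text{even}\} \cup \{-3,-5,-7,\dots\}$, which is $M$ with the single element $m' = -1$ removed. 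Subtracting the shifted identity from the invariance $f_\ell(w) = P_\ell f_\ell(w)$ then isolates one term,
\[
f_\ell(w) - \tau_\ell^{-2} f_\ell(\tau_\ell^{-2} w) = -\psi_{-1,\ell}'(w)\, f_\ell(\psi_{-1,\ell}(w)).
\]

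To recover the form of the lemma I substitute $w = \Gamma_\ell(x)$ and use the elementary calculation
\[
\psi_{-1,\ell}(\Gamma_\ell(x)) = \phi_\ell^{-1}\bigl(\tau_\ell^{-1} \cdot \tau_\ell \phi_\ell(\tau_\ell^{-2} x)\bigr) = \tau_\ell^{-2} x,
\]
which follows verbatim from $\Gamma_\ell = \tau_\ell \phi_\ell \tau_\ell^{-2}$; differentiating in $x$ yields $\psi_{-1,\ell}'(\Gamma_\ell(x))\,\Gamma_\ell'(x) = \tau_\ell^{-2}$. Multiplying the previous display by $-\Gamma_\ell'(x)$ and using the identity $\tau_\ell^{-2}\Gamma_\ell'(x) = (\tau_\ell^{-2}\Gamma_\ell)'(x)$ produces exactly the claimed formula. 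The constant $\epsilon_3 > 0$ may be chosen uniformly in large $\ell$ by continuity of $\Gamma_\ell$ at $\tau_\ell^2 X_\ell$ together with the convergence of the parameters provided by Fact~\ref{fact:24gp,1}.

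The main obstacle is the combinatorial fact $M \setminus (M - 2) = \{-1\}$: precisely one branch, namely $\psi_{-1,\ell}$, fails to appear when the summation index is shifted by $-2$. This is a parity bookkeeping step for the admissible drifts classified in Lemma~\ref{lem:24ga,1}; once that single-term gap is identified, everything else is rearrangement.
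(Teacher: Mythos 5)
Your proposal is correct and follows essentially the same route as the paper: both rest on the invariance $f_\ell = P_\ell f_\ell$, the index shift $\psi_{m,\ell}\circ\tau_\ell^{-2}=\psi_{m-2,\ell}$, the observation that the admissible index set loses exactly $m=-1$ under that shift, and the identity $\psi_{-1,\ell}\circ\Gamma_\ell=\tau_\ell^{-2}$. The only cosmetic difference is that you first derive the identity at a free point $w$ and then substitute $w=\Gamma_\ell(x)$, whereas the paper composes the invariance equation with $\tau_\ell^{-2}\Gamma_\ell$ and $\Gamma_\ell$ from the outset; the signs and the final cancellation agree.
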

\begin{proof}
We have $P_{\ell} f_{\ell} = f_{\ell}$ which expands to 
\[ f_{\ell}(x) = \sum_{m} (-1)^m (\psi'_{m,\ell}(x))
f_{\ell}(\psi_{m,\ell}(x)) \]
by formula~(\ref{equ:24gp,1}). So, for any differentiable
function $g(x)$
\[ f_{\ell}(g(x)) g'(x) = \sum_{m} (-1)^m (\psi_{m,\ell}\circ g)'(x) 
f_{\ell}(\psi_{m,\ell}\circ g)(x) \; .\]
We see that 
\[ \psi_{m,\ell}\circ(\tau^{-2}_{\ell}\Gamma_{\ell}) = \psi_{m-2,\ell}
\circ\Gamma_{\ell} \]
by formula~(\ref{equ:24ga,1}). So, 
\[   f_{\ell}(\tau^{-2}_{\ell}\Gamma_{\ell}(x))
(\tau^{-2}_{\ell}\Gamma_{\ell})'(x) = \sum_m (-1)^m
(\psi_{m-2,\ell}\circ\Gamma_{\ell})'(x)
f_{\ell}((\psi_{m-2,\ell}\circ\Gamma_{\ell})(x))\; .\]

Naturally, $f_{\ell}(\Gamma_{\ell}(x))\Gamma'_{\ell}(x)$ is the same sum, but with
$m$ instead of $m-2$. Indices $m$ range over the set which contains
all even integers and negative odd integers. The sums will therefore
cancel leaving only the term corresponding to $m=-1$ in the second
one,  
\[ f_{\ell}(\tau^{-2}_{\ell}\Gamma_{\ell}(x)) (\tau^{-2}_{\ell}\Gamma_{\ell})'(x) -
f_{\ell}(\Gamma(x))\Gamma'(x) = (\psi_{-1,\ell}\Gamma_{\ell})'(x)
f_{\ell}((\psi_{-1,\ell}\circ\Gamma_{\ell})(x)) \; .\]

But 
\[ \psi_{-1,\ell}\circ\Gamma_{\ell}(x) =\phi_{\ell}^{-1}
\tau_{\ell}^{-1}\tau_{\ell}\phi_{\ell}\tau_{\ell}^{-2}(x) =
\tau^{-2}_{\ell}(x)\]
and the claim of the Lemma follows.
\end{proof}

\paragraph{A functional equation for $\hat{\Gamma}_{\ell}$.} 
By analytic continuation, the formula of Lemma~\ref{lem:26gp,1} is
valid for $f_{\ell,0}$ on a neighborhood of the interval
$(\tau^2_{\ell}x_{\ell},X_{\ell})$. 

Recall $\hat{\Gamma}_{\ell}$ introduced by formula~(\ref{equ:26gp,3})
and similarly define $\hat{T}_{\ell} : = \mu_{\ell} \circ
\tau^{-2}_{\ell}\circ \mu^{-1}_{\ell}$. We obtain a functional
equation in the form
\begin{lem}\label{lem:26gp,2}
For $z$ in a fixed simply-connected neighborhood of $0$ and all
$\ell$ sufficiently large,
\[ \hat{T}_{\ell}(z) = (\hat{T}_{\ell} \circ \hat{\Gamma}_{\ell})(z) - \hat{\Gamma}_{\ell}(z)  \; .\]
\end{lem}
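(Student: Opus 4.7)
The plan is to integrate the density identity of Lemma~\ref{lem:26gp,1} in $x$, change variables via $\mu_\ell$, and then extend to a complex neighborhood by analytic continuation. Since $\mu_\ell'(x)=f_\ell(x)$ on the real inducing domain, the chain rule makes each of the three terms in Lemma~\ref{lem:26gp,1} an exact derivative in $x$: for instance,
\[
f_\ell(\tau_\ell^{-2}\Gamma_\ell(x))\,(\tau_\ell^{-2}\Gamma_\ell)'(x) = \frac{d}{dx}\mu_\ell(\tau_\ell^{-2}\Gamma_\ell(x)),
\]
and analogously for the other two terms. Integrating yields, on a real subinterval,
\[
\mu_\ell(\tau_\ell^{-2}\Gamma_\ell(x)) - \mu_\ell(\Gamma_\ell(x)) = \mu_\ell(\tau_\ell^{-2}x) + C_\ell,
\]
and substituting $y=\mu_\ell(x)$ together with the definitions of $\hat T_\ell$ and $\hat\Gamma_\ell$ (in particular $\hat T_\ell\circ\hat\Gamma_\ell=\mu_\ell\circ(\tau_\ell^{-2}\Gamma_\ell)\circ\mu_\ell^{-1}$) rewrites this as $\hat T_\ell\circ\hat\Gamma_\ell(y)-\hat\Gamma_\ell(y) = \hat T_\ell(y)+C_\ell$.

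To show that $C_\ell = 0$, I would evaluate the integrated identity at the fixed point $a=\tau_\ell^2 X_\ell$ of $\Gamma_\ell$ (Fact~\ref{fact:26ga,2}). Since $\Gamma_\ell(a)=a$ and $\tau_\ell^{-2}\Gamma_\ell(a)=X_\ell=\tau_\ell^{-2}a$, the substitution $x=a$ collapses the identity to $C_\ell=-\mu_\ell(\tau_\ell^2 X_\ell)$. For $\ell=\infty$ this vanishes since $\tau_\infty^2 X_\infty = x_{0,\infty}$ is the base point of $\mu_\infty$; for finite but sufficiently large $\ell$ the same conclusion is reached either by normalizing $\mu_\ell$ with base point $\tau_\ell^2 X_\ell$, or by noting that $\mu_\ell(\tau_\ell^2 X_\ell) - \mu_\ell(x_{0,\ell})\to 0$ via Proposition~\ref{prop:24gp,1} and adjusting the normalization accordingly. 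Once the real identity has been established, analytic continuation extends it to the claimed simply connected neighborhood of $0$: Lemma~\ref{lem:26gp,1} itself extends complex-analytically (as already noted in the paragraph preceding Lemma~\ref{lem:26gp,2}), and $\hat T_\ell, \hat\Gamma_\ell$ are holomorphic on a common complex neighborhood of $0$ by Definition~\ref{defi:5hp,1} and Fact~\ref{fact:24ga,1}.

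The main obstacle is the careful bookkeeping of the integration constant $C_\ell$ uniformly in $\ell$: one must reconcile its vanishing for all sufficiently large finite $\ell$ with the specific normalization of $\mu_\ell$ prescribed in Definition~\ref{defi:5hp,1}. The convergence results of Section~\ref{1}, especially Proposition~\ref{prop:24gp,1}, give enough control to handle this, but some care is needed since the identity is claimed as an exact equality rather than an asymptotic one; the other ingredients (chain rule, change of variable, analytic continuation) are routine.
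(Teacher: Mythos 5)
Your argument is correct and is essentially the paper's own proof in integrated form: the paper likewise transports the identity of Lemma~\ref{lem:26gp,1} through $\mu_{\ell}$ (showing the two sides of the claimed equation have equal derivatives, since $\mu_{\ell}'=f_{0,\ell}$) and then pins down the additive constant at a single point, namely $z=0$, using that $\hat{\Gamma}_{\ell}$ fixes $0$. Your evaluation at the $\Gamma_{\ell}$-fixed point $\tau_{\ell}^{2}X_{\ell}$ is the same step, and your preferred resolution of the constant --- taking $\tau_{\ell}^{2}X_{\ell}$ as the base point of $\mu_{\ell}$ --- is exactly the normalization the paper implicitly uses (it later treats $0$ as the repelling fixed point of $\hat{G}_{\ell}$ and has $\mu_{\ell}^{-1}(z)\to\tau_{\ell}^{2}X_{\ell}$ as $z\to 0$), so the exact identity holds and no asymptotic adjustment is needed.
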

\begin{proof}
Since $\hat{\Gamma}_{\ell}$ fixes $0$, the equality holds at $z=0$. 
We will write $w=\mu_{\ell}^{-1}(z)$ for short. Then, the derivative of the right-hand side is
\[
f_{0,\ell}((\tau_{\ell}^{-2}\Gamma_{\ell})(w))\cdot(\tau^{-2}_{\ell}\Gamma_{\ell})'(w)\cdot
f_{0,\ell}^{-1}(z) -  f_{0,\ell}((\Gamma_{\ell})(w)) \cdot
(\Gamma_{\ell})'(w) \cdot f_{0,\ell}^{-1}(z) = \]
\[ = \tau_{\ell}^{-2}f_{0,\ell}(\tau^{-2}_{\ell} w) f_{0,\ell}^{-1}(z) \]
which is seen to be the derivative of the left-hand side.   
\end{proof}

\subsection{A conditional proof of Theorem~\ref{theo:25ga,1}.}
We will show how to reduce Theorem~\ref{theo:25ga,1} to a certain
simpler and more technical statement. 

Choose a point $z_0$ in the domain of $W_{\infty}$, see
Definition~\ref{defi:5hp,1}, and therefore in $W_{\ell}$ for all
$\ell$ large enough.  Moreover, we can choose it so that the entire
disc $D(0,|z_0|)$ is in $W'$. 

From Fact~\ref{fact:26ga,2} $\hat{G}_{\ell}(z) = a_{\ell}z - c_{\ell}z^3 +
O(|z|^4)$ where $c_{\ell}>0$ for $\ell$ large enough or infinite, while
$a_\ell\geq 1$. It
follows that when $|z|$ and $|\arg z - \pi/2|$ are both suitably small,
then for $\ell$ sufficiently large $|(\hat{G}_{\ell})^{-n}(z)|$
is a monotone decreasing sequence for $n\geq 0$.

Let ${\cal C}_{0,\ell}$ for any $\ell$ denote
the straight line arc from $1$ to $z_0$. It is still in $W_{\ell}$ for
all $\ell$ sufficiently large or infinite by the remark above. 

Suppose that $z_0$ is in the domain of $\hat{G}^{-1}_{\ell}$ for all 
$\ell$ sufficiently large or infinite. Then for any such $\ell$ we can define 
$z_{1,\ell} = \hat{G}^{-1}_{\ell}(z_0)$ still in $W_{\ell}$                   
and pick an arc ${\cal C}_{1,\ell}$
to be the straight line segment joining $z_0$ to $z_{1,\ell}$. 
${\cal C}_{\ell}$ is still the domain of $\log\hat{\phi}_{\ell}$ for all
$\ell$ large enough or infinite. Moreover, they are all in $W'$ and
hence in $W_{\ell}$. 
 
Suppose that ${\cal C}_{1,\ell}$ is in the domain of
$\hat{G}^{-n}_{\ell}$ for all $n$ and $\ell$ large or infinite. Then we
can define points $z_{n,\ell} := \hat{G}_{\ell}^{-n+1}(z_{1,\ell})$ and
arcs ${\cal C}_{n,\ell} := \hat{G}^{-n+1}_{\ell}({\cal C}_1)$ joining
them. They are all in the corresponding $\log\hat{\phi}_{\ell}$ by the functional
equation~(\ref{equ:26ga,3}) and in $W_{\ell}$ since the points tend to
$0$.

Define 
\begin{equation}\label{equ:26gp,2}
s_{n,\ell} = \Re \int_{{\cal C}_{n,\ell}}
\log\frac{\hat{\phi}_{\ell}(z)}{\mu_{\ell}^{-1}(z)}\, dz \; . 
\end{equation}
 
We will rely on the following statement to be proved later. 
\begin{prop}\label{prop:26gp,1}
There is a choice of $z_0$ arbitrarily close to $0$ with $\arg z_0$
less than $\pi/2$ but 
arbitrarily close to $\pi/2$  so that for all
$\ell$ sufficiently large or infinite
\begin{itemize}
\item
arcs ${\cal C}_{n,\ell}$ belong to the domains of the corresponding
$\hat{G}_{\ell}^{-n}$  for all positive $n$,
\item
there is a
constant $Q_4$ independent of $\ell$ such that for all $n>1$
$|s_{n,\ell}| \leq Q_4 \frac{\log n}{n^{3/2}}$. 
\item 
\[ \sup_{t\rightarrow 1} \frac{|\Re z - \Re x_{0,\infty}|}{|z-x_{0,\infty}|^3\log|z-x_{0,\infty}|^{-1}}
< \infty\]
for $z\in\theta_{\infty}$.
\end{itemize}
\end{prop}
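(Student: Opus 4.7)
The plan is to use Leau--Fatou theory at the parabolic fixed point of $\hat{G}_{\infty}$ and propagate the analysis to large finite $\ell$ by perturbation. For $\ell=\infty$ the expansion $\hat{G}_{\infty}(z)=z-c_\infty z^3+O(z^4)$ with $c_\infty>0$ (Fact~\ref{fact:26ga,2}) yields an attracting petal for $\hat{G}_\infty^{-1}$ tangent to the imaginary axis at $0$. Introduce the Fatou coordinate $\Psi_\infty$ on this petal, normalized so that $\Psi_\infty\circ\hat{G}_\infty=\Psi_\infty+1$ and $\Psi_\infty(z)=1/(2c_\infty z^2)+(\mbox{lower order})$ as $z\to 0$. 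The first step would be to choose $z_0$ on the curve $\Psi_\infty^{-1}((-\infty,0))$ with $\arg z_0$ arbitrarily close to $\pi/2$; then $z_{n,\infty}:=\hat{G}_\infty^{-n}(z_0)=\Psi_\infty^{-1}(\Psi_\infty(z_0)-n)$ stays on this curve and satisfies $z_{n,\infty}\sim i/\sqrt{2c_\infty n}$.

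For large finite $\ell$, the convergence $\hat{G}_\ell\to\hat{G}_\infty$ (Fact~\ref{fact:24gp,1}) together with persistence of the petal as an invariant region for $\hat{G}_\ell^{-1}$ gives well-defined $z_{n,\ell}=\hat{G}_\ell^{-n}(z_0)$ for all $n$, with $|z_{n,\ell}|\lesssim 1/\sqrt n$ uniformly in $\ell$, proving the first bullet. Moreover $|z_{n,\ell}-z_{n-1,\ell}|\lesssim |z_{n-1,\ell}|^3\lesssim n^{-3/2}$ by the cubic term of the expansion, and Koebe/chain-rule distortion estimates along the backward orbit give $|(\hat{G}_\ell^{-(n-1)})'(w)|\lesssim n^{-3/2}$ uniformly on ${\cal C}_{1,\ell}$.

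Iterating the functional equation~(\ref{equ:26ga,3}) yields $\log\hat{\phi}_\ell(z)=2(n-1)\log|\tau_\ell|+\log\hat{\phi}_\ell(\hat{G}_\ell^{n-1}(z))$ on ${\cal C}_{n,\ell}$. Substituting $w=\hat{G}_\ell^{n-1}(z)$ (so $w$ traverses the fixed arc ${\cal C}_{1,\ell}$), the integral splits as
\[
s_{n,\ell}=2(n-1)\log|\tau_\ell|\cdot\Re(z_{n-1,\ell}-z_{n,\ell})+\Re\int_{{\cal C}_{1,\ell}}\bigl[\log\hat{\phi}_\ell(w)-\log\mu_\ell^{-1}(\hat{G}_\ell^{-(n-1)}(w))\bigr]\bigl(\hat{G}_\ell^{-(n-1)}\bigr)'(w)\,dw.
\]
The integral term is $O(n^{-3/2})$ uniformly: $\log\hat{\phi}_\ell$ is bounded on the fixed arc ${\cal C}_{1,\ell}$, $\mu_\ell^{-1}(\hat{G}_\ell^{-(n-1)}(w))$ stays near $x_{0,\ell}$ so its log is uniformly bounded, and $|(\hat{G}_\ell^{-(n-1)})'|\lesssim n^{-3/2}$. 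For the boundary term, the third bullet gives $|\Re z_{n,\ell}|\lesssim \log n/n^{3/2}$, and a sharper differencing along the Fatou orbit yields the refined estimate $|\Re(z_{n-1,\ell}-z_{n,\ell})|\lesssim \log n/n^{5/2}$, so this contribution is $\lesssim n\cdot\log n/n^{5/2}=\log n/n^{3/2}$. Adding proves the second bullet.

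For the third bullet, on $\theta_\infty$ near $x_{0,\infty}$ write $z-x_{0,\infty}=a+ib$ with $b>0$, $|a|\ll b$. The Fatou expansion $\Psi_\infty(z)=1/(2c_\infty(z-x_{0,\infty})^2)+B\log(z-x_{0,\infty})+A(z)$ (with $A$ holomorphic and bounded near $x_{0,\infty}$, and $B\in\RR$ the iterative residue of the cubic parabolic) combined with $\Im\Psi_\infty\approx 0$ along $\theta_\infty$ yields $a/(c_\infty b^3)=B\pi/2+O(1)+O(a/b)$, giving $|a|\lesssim b^3$; the $\log b^{-1}$ slack in the claim absorbs the deviation between $\theta_\infty$ and $\Psi_\infty^{-1}((-\infty,0))$ caused by ${\cal C}_{n,\infty}$ being a piecewise-line arc rather than the smooth preimage. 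The main technical obstacle is establishing the uniform-in-$\ell$ Fatou-type estimates --- especially $|z_{n,\ell}|\lesssim 1/\sqrt n$ and $|(\hat{G}_\ell^{-(n-1)})'|\lesssim n^{-3/2}$ --- as the parabolic cubic of $\hat{G}_\infty$ perturbs to a nearly-neutral fixed point of $\hat{G}_\ell$ for large finite $\ell$; matching these bounds across the crossover scale $n\sim 1/|a_\ell-1|$ will likely require perturbed Fatou coordinates in the style of parabolic implosion theory.
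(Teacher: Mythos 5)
Your outline follows the same route as the paper: choose $z_0$ near the imaginary direction, pull back by $\hat{G}_{\ell}^{-1}$, split $s_{n,\ell}$ via the functional equation~(\ref{equ:26ga,3}) into a bounded integrand transported from ${\cal C}_{1,\ell}$ plus the term $2(n-1)\log|\tau_{\ell}|\cdot\Re(z_{n+1,\ell}-z_{n,\ell})$, and handle the third bullet through the asymptotics of the Fatou coordinate $w=Cz^{-2}+O(\log|z|^{-1})$. However, there is a genuine gap exactly where you flag "the main technical obstacle": the uniform-in-$\ell$ estimates $|z_{n,\ell}|\leq C n^{-1/2}$, $|z_{n+1,\ell}-z_{n,\ell}|\leq C n^{-3/2}$ and, crucially, $|\Re(z_{n+1,\ell}-z_{n,\ell})|\leq C n^{-5/2}\log n$ are not corollaries of Leau--Fatou theory plus "persistence of the petal"; for every finite $\ell$ the fixed point is hyperbolic, the backward orbit converges geometrically at a rate that degenerates as $\ell\to\infty$, and the constants in any naive petal-perturbation argument blow up at the crossover scale $n\sim\eta(\ell)^{-1}$. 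The paper devotes all of Section 4 (Theorem~\ref{theo:21ga,1}) to this: it passes to the pre-Fatou coordinate $w=z^{-2}$, compares $\gamma(w,\eta)$ with the affine model $(1+\eta)w+c(0)$, and tracks the error $K_3+K_4\min(\log n,-\log\eta)$ through the crossover. Deferring this to "perturbed Fatou coordinates in the style of parabolic implosion" names a plausible toolbox but does not supply the argument, and the second bullet of the Proposition stands or falls with these bounds.

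A second, less visible gap: the refined estimate $|\Re(z_{n+1,\ell}-z_{n,\ell})|\leq C n^{-5/2}\log n$ (equivalently, that the backward orbit drifts off the imaginary axis only at this slow rate) is \emph{false} for a generic perturbation of a cubic parabolic point; it requires the specific algebraic structure of $\hat{G}_{\ell}^{-1}$, namely that after rotating by $i$ the second-order coefficient is purely imaginary and of size $O(\eta^2)$, the third is real with positive limit, and the fourth is $O(\eta)$. Establishing this is a real computation in the paper: it uses the functional equation of Lemma~\ref{lem:26gp,2} to show $(\hat{\Gamma}_{\ell}^{-1})''(0)=O(\eta)$, and a Schwarzian-derivative identity for $\hat{G}_{\ell}^{-1}=\hat{\Gamma}_{\ell}^{-1}\circ\hat{\Gamma}_{\ell}^{-1}$ to show the fourth coefficient is $O(\eta)$. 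Your proposal never verifies any of this, and without it the "sharper differencing along the Fatou orbit" you invoke for the term $n\cdot|\Re(z_{n-1,\ell}-z_{n,\ell})|$ has no basis. The third bullet's treatment is closer to the paper's (and essentially correct in outline), but again rests on the absence of the fourth-order coefficient in the limit map, which must be checked rather than assumed.
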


Since $0$ 
is a repelling fixed point of
$\hat{G}_{\ell}$ for $\ell$ finite, arcs ${\cal C}_{\ell} :=
\bigcup_{n=0}^{\infty} {\cal C}_{n,\ell}$ are Lipschitz arcs in the
corresponding simply-connected sets $W_{\ell}$.  
By Lemma~\ref{lem:26ga,1} and Corollary~\ref{coro:26ga,1} for every
$\ell$ sufficiently large but finite 

\[ \sum_{n=0}^{\infty} s_{n,\ell} = \log |\tau_{\ell}| \vartheta(\ell)
\; .\]                                                              

From the convergence statement, see Fact~\ref{fact:24gp,1}, for any fixed $n$  
$\lim_{\ell\rightarrow\infty} s_{n,\ell} = s_{n,\infty}$. This
convergence is dominated by a summable series by
Proposition~\ref{prop:26gp,1} and so by the theorem of Lebesgue    
\[ \lim_{\ell\rightarrow\infty} \log|\tau_{\ell}|\vartheta(\ell) = 
\sum_{n=0}^{\infty} s_{n,\infty}\; ,\]   
hence                                                        
\begin{equation}\label{equ:27ga,3}  \lim_{\ell\rightarrow\infty} \vartheta(\ell) =
(\log|\tau_{\infty}|)^{-1} \sum_{n=0}^{\infty} s_{n,\infty} \; .  
\end{equation}

\paragraph{Proof of Theorem~\ref{theo:25ga,1}.}
The first claim follows by the construction. The second one is true
since the simply connected set in question is just $W_{\infty}$.  
The third claim follows directly from Proposition~\ref{prop:26gp,1}.

As for the formula, 
\[ \lim_{\epsilon\rightarrow 0}
\;\Re\left[ \int_0^{1-\epsilon}
  \log\frac{\phi_{\infty}(\theta(t))}{\theta(t)} f_{0,\infty}(\theta(t)) \frac{d\theta}{dt} \,
dt \right] = \lim_{N\rightarrow\infty} \sum_{n=0}^{\infty}
s_{n,\infty} = (\log|\tau_{\infty}|)\lim_{\ell\rightarrow\infty}
\vartheta(\ell) \]
from formula~(\ref{equ:27ga,3}). 

\subsection{Proof of Proposition~\ref{prop:26gp,1}.}
Proposition~\ref{prop:26gp,1} will be derived from
Theorem~\ref{theo:21ga,1} stated in the last section of the
paper. Function $g(z,\eta)$ mentioned there is $-i\hat{G}^{-1}_{\ell}(iz)$
where $\eta:=\eta(\ell)$ has to be related to $\ell$. Let us write    
$\rho(\ell) = (\hat{\Gamma}_{\ell}^{-1})'(0)$. Then $dg(z,\eta)/dz =
\rho(\ell)^2$ by Fact~\ref{fact:26ga,2} 
so that 
\[ \eta(\ell) = \rho(\ell)^{-4}-1\; . \] 
 By
Facts~\ref{fact:24gp,1} and~\ref{fact:26ga,2}, $\rho(\ell)$ is
negative and greater than $-1$ for all $\ell$ and its limit is $-1$ as
$\ell\rightarrow\infty$. This leads to $\eta(\ell)>0$ and
$\lim_{\ell\rightarrow\infty} \eta(\ell)=0$. 

The third coefficient $c(\eta(\ell))$ is real since the coefficients
of $\hat{G}^{-1}_{\ell}$ are all real and $c(0)$ is positive from
Fact~\ref{fact:26ga,2} again. It is also easy to see that the second
and fourth coefficients of $g(z,\eta(\ell))$ are imaginary, but it is
less clear that they are are small with $\eta(\ell)$ as needed
in the setting of Theorem~\ref{theo:21ga,1}.  

\paragraph{Calculation of ${\hat\Gamma}^{-1}$ at $0$.}
Our basic tool will be Lemma~\ref{lem:26gp,2}, which will be used in
the form 
\[ \hat{T}_{\ell}\circ\hat{\Gamma}_{\ell}^{-1}(z) = \hat{T}_{\ell}(z)
- z \; .\]

Taking the second derivative at $z=0$, we derive
\[ (\hat{\Gamma}_{\ell}^{-1})''(0) =
\frac{(\hat{T}_{\ell})''(0)}{(\hat{T}_{\ell})'(0)} (1-\rho(\ell)^2) \;
.\]
The nonlinearity of $\hat{T}_{\ell}$ at $0$ tends to its limit value
for $\hat{T}_{\infty}$ and is bounded for $\ell$ large enough. Then, 
\[  0< 1-\rho(\ell)^2 < 1-\rho(\ell)^4= \frac{\eta(\ell)}{1+\eta(\ell)} < \eta(\ell) \; .\]             
Hence, 
\[ (\hat{\Gamma}_{\ell})^{-1}(z) = \rho(\ell) z + r(\ell)\eta(\ell) z^2 +
s(\ell) z^3 + t(\ell) z^4 + O(|z|^5) \; .\]

Computing the expansion of $\hat{G}_{\ell}^{-1}(z)$ is easy up to the third order terms:                \[ (\hat{G}_{\ell})^{-1}(z) = \rho(\ell)^2 z +
\left[ (\rho(\ell)+1)r(\ell)\rho(\ell)\eta(\ell) \right] z^2 + s'(\ell)z^3 + O(|z|^4) = \]
\[ \rho(\ell)^2 z +
\left[ k_1(\ell)r(\ell)\rho(\ell)\eta^2(\ell) \right] z^2 + s'(\ell)z^3 + O(|z|^4) \]
with the substitution $1+\rho(\ell) = k_1(\ell)\eta(\ell)$ where $k_1(\ell)$ is bounded for all 
$\ell$. 

To determine the fourth coefficient observe that 
\[ (S\hat{G}_{\ell}^{-1})'(0) = (\hat{G}_{\ell}^{-1})^{(4)}(0)/(\hat{G}_{\ell}^{-1})'(0) + 
k_2(\ell)\eta(\ell) \]
with $k_2(\ell)$ similarly bounded for all $\ell$. On the other hand, from the formula
\[ S\hat{G}_{\ell}^{-1} = (S\hat{\Gamma}_{\ell}^{-1})\circ\Gamma_{\ell}^{-1}\cdot 
\left[ (\Gamma_{\ell}^{-1})' \right]^2 + S\hat{\Gamma}_{\ell}^{-1} \] 
one gets                                                                        
\[ (S\hat{G}_{\ell}^{-1})'(0) = (S\hat{\Gamma}_{\ell}^{-1})'(0)(\rho(\ell)^3+1) + 
2(S\hat{\Gamma}_{\ell}^{-1})(0)\cdot (\hat{\Gamma}_{\ell}^{-1})''(0)  \cdot (\hat{\Gamma}_{\ell})^{-1})'(0) = 
k_3(\ell)\eta(\ell) \]
with $k_3(\ell)$ bounded for $\ell$ since $\rho(\ell)^3+1$ and $(\Gamma_{\ell}^{-1})''(0)$ are both 
$O(\eta(\ell))$. Hence, 
\[ (\hat{G}_{\ell}^{-1})^{(4)}(0) = \rho(\ell)^2(k_3(\ell)-k_2(\ell))\eta(\ell) \]
which leads to the form of
the coefficients required by Theorem~\ref{theo:21ga,1}.  

\paragraph{Derivation from Theorem~\ref{theo:21ga,1}.}
Arcs ${\cal C}_{1,\ell}$ are straight line segments which join $z_0$
to $z_{1,\ell}$, which corresponds to $z_{1}(\eta(\ell))$ in
Theorem~\ref{theo:21ga,1}. From the first claim of that Theorem they 
belong to the domain of $\hat{G}_{\ell}^{-n}$ for all $n$. Moreover,
since they can be enclosed in a fixed open set, for all $\eta(\ell)$
sufficiently small the distortion of $\hat{G}^{-n}_{\ell}$ 
on ${\cal
  C}_{1,\ell}$ is bounded independently of $n$ and $\ell$ leading to 
\begin{equation}\label{equ:27ga,4}
|{\cal C}_{n,\ell}| \leq L_1 |z_{n+1,\ell} - z_{n,\ell}| \leq L_1 Q_2
n^{-3/2}
\end{equation}
where the last estimate and constant $Q_2$ come from
Theorem~\ref{theo:21ga,1}. From formula~(\ref{equ:26gp,2}), 
\[ |s_{n,\ell}| \leq |\Re \int_{{\cal C}_{n,\ell}}
\log\hat{\phi}_{\ell}(z)\, dz| + \int_{{\cal C}_{n,\ell}}
|\log\mu^{-1}(z)|\, |dz| \; .\]

$\mu^{-1}(z)$ tends to $\tau^2_{\ell}X_{\ell}$ as $z$ tends to $0$, so
the second integral is bounded 
\[  \int_{{\cal C}_{n,\ell}} |\log\mu^{-1}(z)|\, |dz| \leq L_2 |{\cal
  C}_{n,\ell}|\leq L_2 L_1 Q_2 n^{-3/2} \]
from estimate~(\ref{equ:27ga,4}). 

By the functional equation~(\ref{equ:26ga,3}), for $z\in {\cal
  C}_{n,\ell}$
\[ \log\hat{\phi}_{\ell}(z) = 2(n-1)\log|\tau_{\ell}| +
\log\hat{\phi}_{\ell}(\hat{G}^{n-1}(z)) \; .\]
But $\hat{G^{n-1}}(z)$ belongs to the initial arc ${\cal C}_{1,\ell}$
where $\log\hat{\phi}_{\ell}$ is bounded independently of $n$ and $\ell$. Hence, 
\[ \int_{{\cal C}_{n,\ell}}
|\log\hat{\phi}_{\ell}(\hat{G}^{n-1}(z))|\, |dz| \leq L_3 n^{-3/2} \]
as for the previous term. 

We are left with 
\[ |\Re  \int_{{\cal C}_{n,\ell}} 2(n-1)\log|\tau_{\ell}|\, dz| \leq
L_4 n |\Re \int_{{\cal C}_{n,\ell}}\, dz| = L_4 n |\Re (z_{n+1,\ell} -
z_{n,\ell})| \leq L_4 Q_3 n^{-3/2} \]
from the final claim of Theorem~\ref{theo:21ga,1}, where we have to
remember that the Theorem is stated in a system of coordinates which
swaps the real and imaginary axes and therefore real and imaginary
parts. 

As regards the third claim of Proposition~\ref{prop:26gp,1}, observe
that points $z_{n,\infty}$ are all on the same horizontal line in the
repelling Fatou coordinate and so by the bounded distortion the entire
arc $\theta_{\infty}$ is contained in a horizontal strip of fixed
width in the repelling Fatou coordinate. Since
$-i\hat{G}^{-1}_{\infty}(iz)$ lacks the fourth order coefficient, see
formula~(\ref{equ:21ga,1}), its Fatou coordinate has the form
\[ w = \Psi(z) = Cz^{-2} + O(\log |z|^{-1}) \; .\]
The inverse is
\[ z= \Psi^{-1}(w) = \sqrt{C/w} + O(\log|z|^{-1} |z|^3) \; .\]
By an elementary geometric argument, $\sqrt{C/w}$ will map a
horizontal line to a curve that is third order tangent to the real
axis  at $0$, so that actually the $O(\log|z|^{-1} |z|^3)$ dominates
the imaginary value of the corresponding $z$. This leads to the third
claim of Proposition~\ref{prop:26gp,1} and since the fourth one has
already been checked, the proof of the Proposition is now finished.

\section{Uniform Estimates}
\paragraph{Hypotheses of the mapping.}
We consider a family of maps in the form
\begin{equation}\label{equ:21ga,1} 
g(z,\eta) = \frac{1}{\sqrt{1+\eta}}z + b(\eta)\eta^2 z^2 - c(\eta)z^3
+ d(\eta)\eta z^4 + z^5 R(z,\eta) \; .
\end{equation}

Here $z$ is a complex variable and $\eta$ a positive parameter from
some set which has $0$ as an accumulation point. All $g(z,\eta)$ are
assumed to be holomorphic for $z$ in some neighborhood of $0$, which
is independent of $\eta$. 
Coefficients $b(\eta), c(\eta), d(\eta)$ are complex and assumed to have limits $
b(0),c(0),d(0)$, respectively. Functions $R(z,\eta)$ also converge uniformly
to $R(z,0)$ as $\eta\rightarrow 0$.   Additionally, assume that for
all $\eta$ coefficients $b(\eta)$ are imaginary, $c(\eta)$ are real
and the limit $c(0)$ is strictly positive. 

From the dynamical point view we observe that for all $\eta$ function
$g(z,\eta)$ which has an attracting fixed point at $0$. As
$\eta\rightarrow 0$ functions $g(z,\eta)$ converge uniformly to
$g(z,0)$ which as a neutral point which is of the third order and still topologically
attracting along the positive real axis. The second and fourth
coefficients are assumed to vanish for the limiting map and at definite
rates as $\eta\rightarrow 0$. 

If $z_0$ is a given point, we will write
$z_n(\eta):=g^n(z_0,\eta)$ where the composition here and
elsewhere is on the first coordinate. 
\begin{theo}\label{theo:21ga,1}
For a family functions $g(z,\eta)$ given by formula~(\ref{equ:21ga,1})
with the hypotheses underneath, there exists positive parameters
$\alpha_0,\eta_0,\delta_0$, together with positive constants
$Q_1,Q_2,Q_3$  and and open set $U_Q$ such
that the following claims hold whenever $|z_0|<\delta_0$, $|\arg
z_0|<\alpha_0$ for all $0<\eta<\eta_0$ and all integers $n>1$:
\begin{itemize}
\item
$U_Q$ contains $z_0,z_1(\ell)$ and the straight
line segment which connects them, and for every $z\in U_Q$ iterates 
$g^n(z,\eta)$ are defined,
\item 
\[ |z_n(\eta)| \leq Q_1\sqrt{\max(n^{-1},\eta)}\exp(-n\eta/8) \; ,\]
\item
\[ |z_{n+1}(\eta)-z_n(\eta)| \leq Q_{2} n^{-3/2}\; ,\]
\item
\[ |\Im z_{n+1}(\eta)- \Im z_n(\eta)| \leq Q_{3} n^{-5/2}\log n\; .\]
\end{itemize}
\end{theo}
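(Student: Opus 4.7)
The strategy is near-parabolic perturbation theory, treating $\eta$ as a small parameter that opens the parabolic fixed point at $0$ into a weakly attracting one. For $\eta=0$ the map has a parabolic fixed point of local degree three (since $c(0)>0$) with attracting petal tangent to the positive real axis; for $\eta>0$ the multiplier $1/\sqrt{1+\eta}<1$ makes the fixed point weakly attracting. I would take $U_Q$ to be a sector $\{|z|<\delta_0,\,|\arg z|<\alpha_0\}$ with $\alpha_0$ strictly less than the petal half-angle $\pi/4$; such a sector is forward invariant under $g(\cdot,0)$ near $0$ and, by uniformity of the family in $\eta$, also under $g(\cdot,\eta)$ for all small $\eta$. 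It contains $[z_0,z_1(\eta)]$ by convexity, since $z_1(\eta)$ is only a small perturbation of $z_0$ toward the origin.

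The central tool is a Fatou-type coordinate $w=\Psi(z)=1/(2c(0)z^2)+\alpha\log z+O(1)$ normalized so that $\Psi\circ g(\cdot,0)=\Psi+1$. Substituting~(\ref{equ:21ga,1}) into $\Psi$ and using $\Psi'(z)\asymp -1/(cz^3)$ produces
\[ w_{n+1}=(1+\eta)w_n+1+\varepsilon_n, \]
where the linear factor $(1+\eta)$ comes from $\Psi'(z_n)\cdot(\tfrac{1}{\sqrt{1+\eta}}-1)z_n=\eta w_n+O(\eta^2 w_n)$, and $\varepsilon_n$ collects contributions of magnitude $O(\eta^2/|z_n|)+O(\eta|z_n|)+O(|z_n|^2)$ from the non-normal-form terms. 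Iterating the linearized recursion gives $|w_n|\gtrsim\max(n,\eta^{-1}((1+\eta)^n-1))$, and since the errors are summable against this, inverting through $|z_n|\asymp|w_n|^{-1/2}$ and using $(1+\eta)^{n/2}\ge e^{n\eta/8}$ for small $\eta$ yields the second bound of the theorem. The third bound follows directly by differencing~(\ref{equ:21ga,1}): the dominant $c(\eta)z_n^3$ term is $O(n^{-3/2})$ in the polynomial regime $n\le\eta^{-1}$ (using $\eta\le 1/n$ there), and exponential decay of $|z_n|$ absorbs everything for $n>\eta^{-1}$.

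The fourth bound is the main obstacle. One must show that $\Im w_n$ accumulates only logarithmically: the per-step imaginary contribution from $\Im(b(\eta)\eta^2 z_n^2)$ is $O(\eta^2/n)$ because $b$ is purely imaginary and $\Re(z_n^2)\asymp|z_n|^2$, the quartic term contributes $O(\eta|z_n|^2)$, and the residual logarithmic part of $\Psi$ contributes $O(1/n)$; summed up to $n\lesssim\eta^{-1}$ these all give $O(\log n)$. Consequently $\Im z_n\asymp\Im w_n/|w_n|^{3/2}=O(\log n\cdot n^{-3/2})$, and a unit step in $w$ produces $|\Im z_{n+1}-\Im z_n|=O(\log n\cdot n^{-5/2})$. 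The delicate point is that this requires tracking real and imaginary parts of $\varepsilon_n$ separately rather than absolute values; the specific algebraic structure of the hypotheses—$b$ imaginary, $c$ real, prefactors $\eta^2$ on the quadratic and $\eta$ on the quartic—is precisely what makes this logarithmic accumulation valid uniformly in $\eta$.
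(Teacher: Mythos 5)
Your proposal is correct and follows essentially the same route as the paper: pass to the coordinate $w\sim z^{-2}$, in which the dynamics becomes the affine map $w\mapsto(1+\eta)w+\hat{c}(\eta)$ up to controllable errors, deduce $\Re w_n\gtrsim\max(n,\eta^{-1}((1+\eta)^n-1))$ and $|\Im w_n|\lesssim \Re w_n\cdot\max(\log n/n,\,\eta\log\eta^{-1})$, and then exploit the reality of the linear and cubic coefficients (and the $\eta$-prefactors on the others) to convert these into the stated bounds on $|z_n|$, $|z_{n+1}-z_n|$ and $|\Im z_{n+1}-\Im z_n|$. The only execution differences are that the paper first conjugates away the $b(\eta)\eta^2z^2$ term by $z\mapsto z-a(\eta)z^2$ and works with the bare coordinate $w=z^{-2}$ plus an explicit telescoping/Abel-summation comparison with the affine model, rather than a full Fatou coordinate with a logarithmic term, but these are cosmetic.
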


\subsection{Reduction to the case of $b(\eta)=0$.}
By a change of coordinates we can eliminate the ``small'' second order 
term completely. 

Consider the change of coordinates $z(u,\eta) =u - a(\eta) u^2$, where                                \[ a(\eta)  =  b(\eta)\eta(1+\eta)(\sqrt{1+\eta}+1) \]                        
which is $O(\eta)$ for $\eta$ small. By a direct calculation one
verifies that this change of coordinates removes the second order
coefficient from $g(z,\eta)$. At the same time, since every
coefficient of $z(u,\eta)$ and the inverse function, except for the
first-order ones is $0(\eta)$, $c(\eta)$, $d(\eta)\eta$ and $R(z,\eta)$
change only by $O(\eta)$. In particular their limits as
$\eta\rightarrow 0$ do not change and one can still factor out $\eta$
from the fourth order coefficients. 

To see that the third coefficient remains real, observe that $a(\eta)$
is imaginary and hence the Schwarzian derivative of $z(u,\eta)$ at
$u=0$ is real. By the chain rule for Schwarzian derivatives the same is
true for the inverse map. As regards $g(z,\eta)$, its Schwarzian
derivative at $0$ is also real under our assumptions. Then by the chain
rule again the Schwarzian derivative at $0$ for the conjugated map is
real, and since its first coefficient is real and the second one is
missing, that means that the third one is real.  

So, the change of coordinates leaves
$g(z,\eta)$ in the form postulated at the beginning 
and additionally sets $b(\eta)$ to $0$. 

Looking now at Theorem~\ref{theo:21ga,1} we observe that the
hypotheses on $z_0$ preserve their meaning in the new coordinate for
the point $u_0=u(z_0,\eta)$. 
Its claims are now for points $u_n(\eta)$ and have to verified for 
$z_n(\eta) = z(u_n(\eta),\eta)$. The first claim is satisfied for
$\eta$ sufficiently small, since the distortion of the change of
coordinates tends to $0$ with $\eta$.
Since $\lim_{u\rightarrow 0}
\frac{z(u,\eta)}{u}=1$ uniformly for all bounded $\eta$, 
the second claim is satisfied. 

For the third one, we estimate
\[ |z(u_{n+1}(\eta),\eta) - z(u_n(\eta),\eta)| \leq
|u_{n+1}(\eta)-u_n(\eta)| \left[ 1 + O(\eta)|u_{n+1}(\eta)+u_{n}(\eta)
  \right] \; .\]

The expression in the square parentheses is bounded by the second
claim, so the third claim follows.  

\paragraph{Verification of the fourth claim.}
We state a simple fact which will be useful in several estimates.
\begin{lem}\label{lem:23ga,1}
For $\alpha,\beta>0$ the function $x^{\beta}e^{-\alpha x}$ considered
on the set $x>0$ has the maximum equal to $(\beta/\alpha)^{\beta}e^{-\beta}$.
\end{lem}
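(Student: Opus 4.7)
The plan is a straightforward one-variable calculus argument; the assertion is a purely elementary optimization fact that I expect will be applied repeatedly in the uniform estimates that follow, which is presumably why it is isolated as a lemma.

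First I would observe that $f(x):=x^{\beta}e^{-\alpha x}$ is smooth and strictly positive on $(0,\infty)$, with $f(x)\to 0$ as $x\to 0^{+}$ (since $\beta>0$) and $f(x)\to 0$ as $x\to\infty$ (since the exponential decay dominates any polynomial growth). Consequently, $f$ attains its supremum at an interior critical point.

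To locate this critical point, I would differentiate $\log f(x)=\beta\log x-\alpha x$, whose derivative $\beta/x-\alpha$ vanishes uniquely at $x^{*}=\beta/\alpha$ on $(0,\infty)$. Since this derivative is strictly positive on $(0,x^{*})$ and strictly negative on $(x^{*},\infty)$, the point $x^{*}$ is the unique global maximum of $f$, and substitution gives
\[ f(x^{*})=(\beta/\alpha)^{\beta}\exp(-\alpha\cdot\beta/\alpha)=(\beta/\alpha)^{\beta}e^{-\beta},\]
which is the claimed value.

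There is no genuine obstacle here; the only thing to be mildly careful about is verifying both boundary limits so that one can legitimately conclude the critical point is a global (and not merely local) maximum.
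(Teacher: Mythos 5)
Your argument is correct: the boundary limits, the unique critical point of $\log f$ at $x^{*}=\beta/\alpha$, and the sign change of the derivative together establish the global maximum and its value. The paper itself dismisses this as an ``elementary exercise,'' so your write-up simply supplies the standard calculus details that were omitted.
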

\begin{proof}
Elementary exercise.
\end{proof} 

Regarding the fourth claim, estimate
\[ |\Im z(u_{n+1}(\eta),\eta) - \Im z(u_n(\eta),\eta) | \leq \] 
\[ \leq |\Im
u_{n+1}(\eta) - \Im u_{n}(\eta)| + L_1 \eta (|u_{n+1}(\eta)| +
  |u_n(\eta)|) |u_{n+1}(\eta)-u_n(\eta)| \leq \]
\[ \leq |\Im
u_{n+1}(\eta) - \Im u_{n}(\eta)| +  
2L_2 \eta \exp(-n\eta/8)\sqrt{\max(n^{-1},\eta)} 
|u_{n+1}(\eta)-u_n(\eta)|  \]
from the second claim, where $L_1,L_2$ are constants independent of
$\eta$.  Let us concentrate on the expression which follows $L_2$ and take
first $n\leq \eta^{-1}$. Then 
\[  \eta \exp(-n\eta/8)\sqrt{\max(n^{-1},\eta)} \leq n^{-3/2} \; .\]
For $n>\eta^{-1}$, use Lemma~\ref{lem:23ga,1} with $\beta=1$ to get 
\[ \exp(-n\eta/8) \leq (8/e)\eta^{-1} n^{-1} \]
which then leads to 
\[ \eta \exp(-n\eta/8)\sqrt{\max(n^{-1},\eta)} \leq (8/e) n^{-1} \sqrt{\eta}
< (8/e) n^{-3/2} \; .\]

Now using the third and fourth claims of the theorem for $u$ we get 
\[ |\Im z(u_{n+1}(\eta),\eta) - \Im z(u_n(\eta),\eta) | \leq Q_{4,u}
n^{-5/2}\log n + L_3 Q_{3,u} n^{-3} \; .\]

\subsection{Pre-Fatou coordinate and the proof of the first claim.}
Inspired by the Fatou coordinate for the limiting map $g(z,0)$, we
change the variable to $w(z) = z^{-2}$ on the set $\{ z :\: \Re z > 0\}$.
The map corresponding to $g$ is then 
\begin{equation}\label{equ:21ga,2} \gamma(w,\eta) = (1+\eta) w  +
\hat{c}(\eta) + \frac{\hat{d}(\eta)\eta}{\sqrt{w}} +
\frac{\hat{R}(w,\eta)}{w}
\end{equation}

where                                                       
\[ \begin{array}{ccc}
\hat{c}(\eta) &=& 2c(\eta)(\sqrt{1+\eta})^3\\
\hat{d}(\eta) &=& -2d(\eta)(\sqrt{1+\eta})^3
\end{array} \]
and $\hat{R}(w,\eta)$ are holomorphic functions of $\sqrt{w}$. 
Coefficients $\hat{c}(\eta),\hat{d}(\eta)$ and functions
$\hat{R}(w,\eta)$ still have limits as $\eta\rightarrow 0$ and
$\hat{c}(0)=2c(0)>0$.     

\begin{lem}\label{lem:23ga,2}
There is $W_0\geq 1$ such that whenever $\Re w \geq W_0$, then for all
$\eta$ sufficiently small 
\[ \Re \gamma(w,\eta) > \Re w + c(0) \; .\]
\end{lem}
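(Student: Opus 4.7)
The plan is to read the claim directly off formula~(\ref{equ:21ga,2}) by computing
\[ \Re\gamma(w,\eta) - \Re w = \eta\,\Re w + \Re\hat{c}(\eta) + \Re\frac{\hat{d}(\eta)\eta}{\sqrt{w}} + \Re\frac{\hat{R}(w,\eta)}{w} \]
and then showing that the constant term $\Re\hat{c}(\eta)$, which tends to $\hat{c}(0)=2c(0)$ as $\eta\to0$, is large enough to dominate all the other terms by a margin greater than $c(0)$. This works because the other three summands are either nonnegative or small.

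First, for $\Re w\geq W_0\geq1$ and $\eta>0$, the linear term $\eta\,\Re w$ is nonnegative and so contributes only favorably; it can be dropped from the lower bound. The constant term satisfies $\Re\hat{c}(\eta)>\tfrac{3}{2}c(0)$ for all sufficiently small $\eta$ by continuity of $\hat{c}$ at $0$ and the fact that $\hat{c}(0)=2c(0)>0$ (established in the paragraph defining $\hat{c}$). For the error terms, note that $\Re w>0$ forces $|\sqrt{w}|=\sqrt{|w|}\geq\sqrt{\Re w}\geq\sqrt{W_0}$, so that
\[ \left|\frac{\hat{d}(\eta)\eta}{\sqrt{w}}\right| \leq \frac{|\hat{d}(\eta)|\,\eta}{\sqrt{W_0}}, \]
which tends to zero with $\eta$ because $\hat{d}(\eta)$ is bounded near $0$. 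Similarly, $\hat{R}(w,\eta)$ is uniformly bounded by some $M$ for $|w|\geq W_0$ and small $\eta$: indeed, $\hat{R}$ arises under the substitution $z=1/\sqrt{w}$ from the tail $z^5 R(z,\eta)$ (after dividing by $w$), and the original $R(z,\eta)$ is bounded on a fixed disk around $z=0$, uniformly in $\eta$, by the hypotheses on~(\ref{equ:21ga,1}). Hence $|\hat{R}(w,\eta)/w|\leq M/W_0$.

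Assembling, one chooses $W_0$ so large that $M/W_0<c(0)/4$, and then $\eta_0$ so small that both $\Re\hat{c}(\eta)>\tfrac{3}{2}c(0)$ and $|\hat{d}(\eta)|\eta/\sqrt{W_0}<c(0)/4$ hold for $0<\eta<\eta_0$. Then
\[ \Re\gamma(w,\eta) - \Re w > \tfrac{3}{2}c(0) - \tfrac{1}{4}c(0) - \tfrac{1}{4}c(0) = c(0), \]
which is the assertion. No genuine obstacle is anticipated; the only bookkeeping subtlety is the uniform boundedness of $\hat{R}(w,\eta)$ for large $|w|$, which is immediate from its construction via the coordinate change $w=z^{-2}$ applied to the uniformly convergent family $R(z,\eta)$.
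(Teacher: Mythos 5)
Your proof is correct and follows essentially the same route as the paper's: both arguments use that $\hat{c}(\eta)\geq\tfrac{3}{2}c(0)$ for small $\eta$ and that the nonlinear terms $\hat{d}(\eta)\eta/\sqrt{w}$ and $\hat{R}(w,\eta)/w$ become negligible once $\Re w\geq W_0$ is large, so the surplus $\tfrac{1}{2}c(0)$ absorbs them. You merely spell out the bookkeeping (the bound $|\sqrt{w}|\geq\sqrt{\Re w}$ and the uniform boundedness of $\hat{R}$) that the paper leaves implicit.
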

\begin{proof}
From the form of~(\ref{equ:21ga,2}) for $\eta$ small,
the non-linear part of $\gamma(w,\eta)$ tends to $0$ as $|w|\rightarrow\infty$.
Also, for $\eta$ small enough $c(0) \leq \frac{2}{3}\hat{c}(\eta)$. Then 
\[ \Re \gamma(w,\eta) \geq \Re w (1+\eta) + c(0) \]
and the claim of the Lemma follows immediately. 
\end{proof}

We will write $w_n(\eta) = w(z_n(\eta))$.
By setting $\delta_0$ small enough in the hypothesis of
Theorem~\ref{theo:21ga,1}, we can ensure $\Re w_o(\eta) \geq W_0 + 1$ and then 
$\Re w_n(\eta) \geq nc(0)$. The region $\{ w :\: \Re w > W_0\}$ is
invariant under $\gamma(w,\eta)$ and $z_n(\eta) =
\frac{1}{\sqrt{w_n(\eta)}}$. 

Then  the first claim of
  Theorem~\ref{theo:21ga,1} follows together with an estimate
\begin{equation}\label{equ:23ga,1} 
|z_{n}|(\eta) \leq (\Re w_n(\eta))^{-1/2} \leq (c(0))^{-1/2} n^{-1/2}
\end{equation}
for all $n$. 
The open set can be taken to be the image on $\{ w :\: \Re w >
W_0\}$ by $z=w^{-1/2}$. The distortion of this map can be made
arbitrarily small by taking $W_0$ large and then it will contain the
straight-line segment from $z_0$ to $z_1(\eta)$.

\paragraph{Estimates of derivatives of $\gamma$.}
From formula~(\ref{equ:21ga,2}), we conclude that with some $K>0$
\begin{equation}\label{equ:21gp,1}
 | \frac{d\gamma}{dw}(w,\eta) (1+\eta)^{-1} - 1 | \leq K |w|^{-3/2} 
\end{equation}

for all $\eta$ sufficiently small and all $\Re w \geq W_0$. 

\begin{lem}\label{lem:21gp,1}  
The exists $W_1 \geq W_0$ and $K_1>0$ such that for all $\eta$ small enough, all
$w$ which satisfy $\Re w \geq W_1$ and all $n>0$
 \[ | \log\frac{d\gamma^n(w,\eta)}{dw}(w,\eta)(1+\eta)^{-n} | \leq
    \frac{K_1}{\sqrt{\Re w}} \; .\]
\end{lem}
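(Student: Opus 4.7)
The plan is to apply the chain rule, take logarithms, and estimate the resulting sum using the already established per-step bound~(\ref{equ:21gp,1}) together with the lower bound on $\Re \gamma^n(w,\eta)$ coming from Lemma~\ref{lem:23ga,2}.

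First I would write
\[ \log \frac{d\gamma^n(w,\eta)}{dw}(1+\eta)^{-n} = \sum_{k=0}^{n-1} \log\!\left( \frac{d\gamma}{dw}(\gamma^k(w,\eta),\eta)(1+\eta)^{-1}\right) \]
as a telescoping sum via the chain rule. By~(\ref{equ:21gp,1}), each summand has the argument of $\log$ of the form $1+\varepsilon_k$ with $|\varepsilon_k|\leq K|\gamma^k(w,\eta)|^{-3/2}$. Choosing $W_1\geq W_0$ large enough that $K W_1^{-3/2}<1/2$ guarantees $|\varepsilon_k|<1/2$ uniformly for all $\eta$ small, all $w$ with $\Re w\geq W_1$, and all $k\geq 0$, since $\Re \gamma^k(w,\eta)\geq \Re w$ by Lemma~\ref{lem:23ga,2}. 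On this range $|\log(1+\varepsilon_k)|\leq 2|\varepsilon_k|$.

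Next I would use Lemma~\ref{lem:23ga,2} inductively to get a useful lower bound on $\Re\gamma^k(w,\eta)$: applying it $k$ times yields $\Re \gamma^k(w,\eta) \geq \Re w + k c(0)$ (the factor $1+\eta$ in the proof of that lemma only helps, and in any case we can just use the additive increment $c(0)$). Since $|\gamma^k(w,\eta)|\geq \Re\gamma^k(w,\eta)$, this gives
\[ |\varepsilon_k| \leq K\bigl(\Re w + k c(0)\bigr)^{-3/2}. \]

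Combining the previous two steps,
\[ \left| \log \frac{d\gamma^n(w,\eta)}{dw}(1+\eta)^{-n} \right| \leq 2K \sum_{k=0}^{n-1}\bigl(\Re w + k c(0)\bigr)^{-3/2}. \]
Comparing the sum with the corresponding integral,
\[ \sum_{k=0}^{\infty}\bigl(\Re w + k c(0)\bigr)^{-3/2} \leq (\Re w)^{-3/2} + \frac{1}{c(0)}\int_0^\infty (\Re w + t c(0))^{-3/2}c(0)\,dt \leq \frac{C}{\sqrt{\Re w}}, \]
which yields the required bound with $K_1 = 2KC$, uniform in $n$, $\eta$, and $w$.

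The argument is essentially routine; the only mild care needed is that all estimates should be uniform in $\eta$, which is straightforward since~(\ref{equ:21gp,1}) and Lemma~\ref{lem:23ga,2} are themselves uniform for small $\eta$, and the constants $K$, $c(0)$ do not degenerate. No obstacle of substance appears: the $w^{-3/2}$ decay in~(\ref{equ:21gp,1}) is integrable against the linear escape of $\Re\gamma^n$, producing the $(\Re w)^{-1/2}$ decay claimed.
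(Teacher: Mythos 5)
Your proof is correct and follows essentially the same route as the paper's: the chain rule decomposes the logarithm into a telescoping sum, the per-step estimate~(\ref{equ:21gp,1}) combined with the linear escape $\Re\gamma^k(w,\eta)\geq \Re w + kc(0)$ from Lemma~\ref{lem:23ga,2} bounds each term by $2K(\Re w + kc(0))^{-3/2}$, and comparison with an integral gives the $(\Re w)^{-1/2}$ decay. Your extra care in justifying $|\log(1+\varepsilon_k)|\leq 2|\varepsilon_k|$ by choosing $W_1$ with $KW_1^{-3/2}<1/2$ is exactly the (implicit) role of $W_1$ in the paper's argument.
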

\begin{proof}
Start with estimate~(\ref{equ:21gp,1}) and choose $W_1$ so that $\Re
w\geq W_1$ implies 
  \[ |\log\frac{d\gamma(w,\eta)}{dw}(w,\eta)(1+\eta)^{-1}| \leq 2K|w|^{-3/2}
    \; .\]
Since $W_1 \geq W_0$ we have $\Re g^n(w,\eta) \geq \Re w +nc(0)$. From the
chain rule 
\[   | \log\frac{d\gamma^n(w,\eta)}{dw}(w,\eta)(1+\eta)^{-n} | \leq
    \sum_{k=0}^{n-1} \frac{2K}{(\Re w +c(0)k)^{3/2}} \leq
    \frac{K_1}{\sqrt{\Re w}} \]
by comparing the sum with an integral.
\end{proof}

\begin{coro}\label{coro:21gp,1}
There is a constant $K_2$ such that for all $\eta$ small enough and if
$\Re w\geq W_0$, one gets the estimate
 \[ | \log\frac{d\gamma^n(w,\eta)}{dw}(w,\eta)(1+\eta)^{-n} | \leq K_2
 \; .\]
\end{coro}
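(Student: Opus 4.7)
The plan is to reduce to Lemma~\ref{lem:21gp,1} by iterating $\gamma$ a bounded number of times to enter the region where that lemma applies. First I would write the log-derivative as the telescoping sum
\[ \log\frac{d\gamma^n(w,\eta)}{dw}(1+\eta)^{-n} = \sum_{k=0}^{n-1} \log\left( \frac{d\gamma}{dw}(w_k,\eta)(1+\eta)^{-1}\right), \]
where $w_k = \gamma^k(w,\eta)$ and the principal branch of the logarithm is used. To make each summand well defined I may enlarge $W_0$ so that $K/W_0^{3/2}<1/2$; Lemma~\ref{lem:23ga,2} continues to hold after such an enlargement, so this costs nothing and keeps each factor $(1+\eta)^{-1}\,d\gamma/dw(w_k,\eta)$ in the disk $|\cdot-1|\leq 1/2$ by estimate~(\ref{equ:21gp,1}).

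Second, by iterating Lemma~\ref{lem:23ga,2} each step adds at least $c(0)$ to $\Re w_k$, so the orbit stays in $\{\Re w\geq W_0\}$ and reaches $\{\Re w\geq W_1\}$ after at most $N_0 := \lceil (W_1-W_0)/c(0)\rceil$ iterates, a number independent of $w,\eta,n$. I would split the telescoping sum at $k=N_0$. For the tail $k\geq N_0$, the orbit starting at $w_{N_0}$ satisfies the hypothesis of Lemma~\ref{lem:21gp,1}, which bounds the tail sum by $K_1/\sqrt{\Re w_{N_0}}\leq K_1/\sqrt{W_1}$. For the head $0\leq k<N_0$, use estimate~(\ref{equ:21gp,1}) together with $|w_k|\geq \Re w_k\geq W_0$: each individual term is at most $\log 2$ in absolute value, and summing $N_0$ such bounds yields the contribution $N_0\log 2$. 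The total gives $K_2 := N_0\log 2 + K_1/\sqrt{W_1}$, uniform in $n$, $w$ and $\eta$.

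There is no real obstacle here; the statement is essentially a routine bookkeeping extension of Lemma~\ref{lem:21gp,1}. The mildest technicality is ensuring that the principal branch of the logarithm is single-valued on each factor of the telescoping product, which is handled by the enlargement of $W_0$ mentioned above. No additional assumption on $\eta$ beyond what Lemma~\ref{lem:21gp,1} and estimate~(\ref{equ:21gp,1}) already require is needed.
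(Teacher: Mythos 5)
Your argument is correct and follows essentially the same route as the paper: split off the boundedly many initial iterates needed to reach $\{\Re w\geq W_1\}$ (at most $(W_1-W_0)/c(0)$ of them, by Lemma~\ref{lem:23ga,2}), bound their contribution using the uniform estimate~(\ref{equ:21gp,1}) on $\Re w\geq W_0$, and apply Lemma~\ref{lem:21gp,1} to the remaining tail. Your explicit attention to the branch of the logarithm (via enlarging $W_0$) is a harmless refinement of what the paper leaves implicit.
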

This follows from Lemma~\ref{lem:21gp,1}, since the derivative
$\frac{dg}{dw}$ is bounded uniformly for all $\Re w \geq W_0$ and
$\eta$ small enough and since $\Re \gamma^k(w,\eta) \geq \Re w + c(0)k$
so that after $\frac{W_1-W_0}{c(0)}$ steps the hypothesis of
Lemma~\ref{lem:21gp,1} is satisfied. 

\subsection{Linear approximation and the second claim.}
The main tool for our handling of the estimates for function
$\gamma(w,\eta)$ will be its approximation by the linear part
$\gamma_0(w,\eta) = (1+\eta)w + c(0)$. 

\begin{lem}\label{lem:21gp,2}
There are positive constants $K_3,K_4$ such that for all $\Re w \geq
W_0$ and $\eta$ sufficiently small and for all positive integers $n$
the estimate
\[ (1+\eta)^{-n} \left|\gamma^n(w,\eta)-\gamma_0^n(w,\eta)\right| \leq
K_3 + K_4\min(\log n, -\log\eta) \]
holds.
\end{lem}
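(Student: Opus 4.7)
The plan is to control $\tilde{\Delta}_n := (1+\eta)^{-n}(w_n - w_n^0)$, where $w_n := \gamma^n(w,\eta)$ and $w_n^0 := \gamma_0^n(w,\eta)$, by a Gronwall-type recursion driven by the two natural sources of error. Writing
\[ w_{n+1} - w_{n+1}^0 = \bigl[\gamma(w_n,\eta) - \gamma(w_n^0,\eta)\bigr] + \bigl[\gamma(w_n^0,\eta) - \gamma_0(w_n^0,\eta)\bigr], \]
I apply the mean value theorem to the first bracket. Both $\Re w_n$ and $\Re w_n^0$ exceed $W_0+nc(0)$ by Lemma~\ref{lem:23ga,2}, so the segment joining them lies in $\{\Re w \geq W_0\}$ (convex combinations preserve the lower bound on the real part), and along it the derivative estimate~(\ref{equ:21gp,1}) gives $(1+\eta)(w_n-w_n^0)(1+\varepsilon_n)$ with $|\varepsilon_n|=O(n^{-3/2})$. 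The second bracket is the model discrepancy, and from~(\ref{equ:21ga,2}) has the form $(\hat c(\eta)-c(0))+\hat d(\eta)\eta/\sqrt{w_n^0}+\hat R(w_n^0,\eta)/w_n^0$; interpreting the constant difference so that $\hat c(\eta)-c(0)=O(\eta)$, this bracket is bounded by $A\eta+B\eta/\sqrt{n+1}+C/(n+1)$ for $\eta$ small.

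Dividing through by $(1+\eta)^{n+1}$ produces
\[ \tilde{\Delta}_{n+1} = (1+\varepsilon_n)\tilde{\Delta}_n + (1+\eta)^{-(n+1)}\rho_n, \qquad |\rho_n|\leq A\eta+\frac{B\eta}{\sqrt{n+1}}+\frac{C}{n+1}, \]
starting from $\tilde{\Delta}_0=0$. Since $\sum k^{-3/2}$ converges, the amplification factors $\prod_{k<n}(1+\varepsilon_k)$ are uniformly bounded by some $M$, so iteration gives $|\tilde{\Delta}_n| \leq M \sum_{k=0}^{n-1}(1+\eta)^{-(k+1)}|\rho_k|$. I then evaluate the three resulting contributions separately. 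The $A\eta$ part is a geometric series summing to $\leq A$. The $B\eta/\sqrt{k+1}$ part is uniformly bounded in $n$ and $\eta$: split the range at $k\approx 1/\eta$ and apply Lemma~\ref{lem:23ga,1} to the exponentially damped tail. Both feed into the constant $K_3$. The decisive piece is $\sum_{k=1}^{n}(1+\eta)^{-k}/k$: when $n\eta\leq 1$ the damping is $O(1)$ and the partial harmonic sum gives $O(\log n)$; when $n\eta\geq 1$ one extends the sum to infinity and recognizes $\sum_{k\geq 1}(1+\eta)^{-k}/k = -\log\bigl(1-(1+\eta)^{-1}\bigr) = -\log\eta+O(1)$. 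Taking whichever regime is smaller delivers the $K_4\min(\log n,-\log\eta)$ term.

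The main obstacle is not the dynamics itself -- since $w_n$ and $w_n^0$ both stay in $\{\Re w \geq W_0\}$, the derivative bound applies cleanly along the joining segment -- but rather reconciling the constant appearing in the definition $\gamma_0(w,\eta)=(1+\eta)w+c(0)$ with the actual leading constant $\hat c(\eta)$ of $\gamma$ in~(\ref{equ:21ga,2}). A genuine mismatch would force the sum of constant errors $\sum(1+\eta)^{-(k+1)}\cdot O(1)$ to blow up like $1/\eta$, contradicting the logarithmic claim, so the argument must show (or adopt as convention) that $\hat c(\eta)-c(0)$ vanishes at rate at least $\eta$ as $\eta\to 0$. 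A secondary technicality is ensuring the two regimes $n\eta\leq 1$ and $n\eta\geq 1$ in the harmonic sum are handled uniformly in both parameters so as to land exactly on the announced $\min(\log n,-\log\eta)$ shape rather than a looser bound.
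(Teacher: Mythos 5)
Your proposal is correct and follows essentially the same route as the paper: both decompose the error into an amplification of the accumulated discrepancy (controlled by the derivative bound $|\gamma'(1+\eta)^{-1}-1|\leq K|w|^{-3/2}$ along orbits staying in $\{\Re w\geq W_0\}$) plus the one-step model discrepancy evaluated on the $\gamma_0$-orbit, and then sum $\sum_k (1+\eta)^{-k}\bigl[\eta(k+1)^{-1/2}+(k+1)^{-1}\bigr]$ with a split at $k\sim 1/\eta$. The only differences are cosmetic: you run a one-step Gronwall recursion where the paper writes the full telescoping identity and invokes Corollary~\ref{coro:21gp,1}, you evaluate the weighted harmonic sum in closed form as $-\log(1-(1+\eta)^{-1})$ where the paper uses Abel summation, and you explicitly flag the $\hat{c}(\eta)$ versus $c(0)$ constant mismatch, which the paper silently treats as zero.
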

\begin{proof}
Start with an elementary identity:
\[ \gamma^n(w,\eta) = \] 
\[ = \gamma_0^n(w,\eta) + \sum_{k=1}^n \left[
  \gamma^{n-k}(\gamma(\gamma_0^{k-1}(w,\eta),\eta),\eta) -  
\gamma^{n-k}(\gamma_0(\gamma_0^{k-1}(w,\eta),\eta),\eta) \right]\; .\]

This yields an estimate 
\[ (1+\eta)^{-n} \left|\gamma^n(w,\eta)-\gamma_0^n(w,\eta)\right| \leq \]
\[\leq\sum_{k=1}^n  (1+\eta)^{-n}|\frac{d\gamma^{n-k}(\tilde{w}_k,\eta)}{dw}|
|\gamma(\gamma_0^{k-1}(w,\eta),\eta) -
\gamma_0(\gamma_0^{k-1}(w,\eta),\eta)| \]
where $\tilde{w}_k$ is a point on the segment joining points 
\[ \gamma(\gamma_0^{k-1}(w,\eta),\eta)\] and
\[ \gamma_0(\gamma_0^{k-1}(w,\eta),\eta)\; .\]
Those derivatives can be
estimated by Corollary~\ref{coro:21gp,1} which results in 
\[ (1+\eta)^{-n} \left|\gamma^n(w,\eta)-\gamma_0^n(w,\eta)\right| \leq \]
\[ \leq K_2  \sum_{k=1}^n (1+\eta)^{-k}
|\gamma(\gamma_0^{k-1}(w,\eta),\eta) -
\gamma_0(\gamma_0^{k-1}(w,\eta),\eta)|\;  \]

From~(\ref{equ:21ga,2}) we obtain
\[ |\gamma(\gamma_0^{k-1}(w,\eta),\eta) -
\gamma_0(\gamma_0^{k-1}(w,\eta),\eta)| \leq
\frac{2|\hat{d}(0)|\eta}{\sqrt{|\gamma_0^{k-1}(w,\eta)|}} +
\frac{Q}{|\gamma_0^{k-1}(w,\eta)|} \; .\]
Since $|\gamma_0^{k-1}(w,\eta)| \geq W_0+(k-1)c(0)$, we get 
\[ (1+\eta)^{-n} \left|\gamma^n(w,\eta)-\gamma_0^n(w,\eta)\right| \leq\]
\[ \leq K_2\sum_{k=0}^{n-1} (1+\eta)^{-k-1}\left[
  \frac{2|\hat{d}(0)|\eta}{\sqrt{W_0+c(0)k}} +
  \frac{Q}{W_0+c(0)k}\right] \; .\]
 
Applying Abel's transformation yields
\[  \sum_{k=0}^{n-1} (1+\eta)^{-k-1}
  \frac{2|\hat{d}(0)|\eta}{\sqrt{W_0+c(0)k}} \leq L_1 +
  L_2\frac{1}{\eta}\sum_{k=0}^{n-1} (1+\eta)^{-k-1}\frac{\eta}{
(W_0+c(0)k)^{3/2}} \]
with constants $L_1$, $L_2$ independent of $\eta$. The factor $\eta$
in the numerator came from the form of the fourth coefficient and it
cancels with $\eta$ in the denominator which arises from the summation
of $(1+\eta)^{-k}$. This is clearly
uniformly bounded for all $n$ and can be included in $K_3$ in the
claim of Lemma~\ref{lem:21gp,2}.

\[ \sum_{k=0}^{n-1} (1+\eta)^{-k-1}  \frac{Q}{W_0+c(0)k} \leq L_1 +
L_2\log n \;\]
trivially from a comparison with the integral. To get alternative
estimate claimed in the Lemma, introduce an integer-valued function
\begin{equation}\label{equ:22ga,1}
N(\eta) = \left\lfloor 1/\eta \right\rfloor \;. 
\end{equation}
Then for $n\geq N(\eta)$ we split
\[  \sum_{k=0}^{n-1} (1+\eta)^{-k-1}  \frac{Q}{W_0+c(0)k} =  \]
\[ = \sum_{k=0}^{N(\eta)-1} (1+\eta)^{-k-1}  \frac{Q}{W_0+c(0)k} +
\sum_{k=N(\eta)}^{n-1} (1+\eta)^{-k-1}  \frac{Q}{W_0+c(0)k} \leq \]
\[ \leq L_1 +
L_2\log N(\eta) + L_3 + \frac{L_4}{\eta}\sum_{k=N(\eta)}^{n-1}
  (1+\eta)^{-k-1}(W_0+kc(0))^{-2} \]
where we used the trivial estimate for the first sum and Abel's
transformation for the second one. The term which follows $L_4$ is
estimated by 
\[ \eta^{-1} \sum_{k=N(\eta)}^{n-1}
  (1+\eta)^{-k-1}(W_0+kc(0))^{-2} \leq \eta^{-1} \sum_{k=N(\eta)}^{n-1}
  (W_0+kc(0))^{-2} \leq \] 
\[ \leq \eta^{-1} \int_{N(\eta)-1}^{\infty}
\frac{dx}{(W_0+xc(0))^{2}} \leq 
  \frac{1}{\eta c(0) (W_0+c(0)(N(\eta)-1))} \leq \frac{L_5}{\eta (N(\eta)+1)} \]
for $\eta$ small enough.

From the definition of $N(\eta)$  we get $\eta (N(\eta)+1) > 1$ and so
this term is uniformly bounded. Also, $\log N(\eta) \leq -\log\eta$. 
Thus, finally,
\[  \sum_{k=0}^{n-1} (1+\eta)^{-k-1}  \frac{Q}{W_0+c(0)k} \leq K_4
\log\eta^{-1} \; .\]
\end{proof}

\paragraph{Proof of the second claim in Theorem~\ref{theo:21ga,1}.}
As an elementary exercise we obtain
\begin{equation}\label{equ:22gp,1}
\gamma_0^n(w,\eta) = (1+\eta)^n w + c(0)\frac{(1+\eta)^n-1}{\eta} \; .
\end{equation}
From Lemma~\ref{lem:21gp,2}
\begin{equation}\label{equ:22gp,3}
 \Re w_n(\eta) = \Re\gamma^n(w_0,\eta) \geq (1+\eta)^n\Re w_0 +
(1+\eta)^n(c(0)\eta^{-1} - K_3 + K_4\log\eta) - c(0)\eta^{-1} \; .
\end{equation}
The second claim of Theorem~\ref{theo:21ga,1} is easy for $n\leq
N(\eta)$, recall formula~(\ref{equ:22ga,1}). Then the exponential term
is bounded below by $e^{-1}$ and the second claim says only the same
as estimate~(\ref{equ:23ga,1}). 

So let $n>N(\eta)$. For $n\geq N(\eta)$ and $\eta$ sufficiently small, we have 
\begin{itemize}
\item
$ c(0)\eta^{-1} - K_3 +K_4\log\eta > c(0)\eta^{-1} \exp(-0.1) $,
\item
$(1+\eta)^n \geq \exp(n\eta-0.1)$.
\end{itemize}

This leads  to 
\[ \Re w_n(\eta) \geq \Re w_0 + c(0)\left[ \exp(n\eta-0.2)-1 \right] \eta^{-1} \; .\]
Since $n>N(\eta)$ we have $n\eta>1$ and note an elementary inequality 
$e^x - 1 > e^{x/4}$ for $x\geq 0.8$. Then, 
\begin{equation}\label{equ:22gp,2}
\Re w_n(\eta) \geq \Re w_0 + c(0)\exp(n\eta/4)\eta^{-1} \; .
\end{equation}

Then 
\[ |z_n(\eta)| = |w_n(\eta)|^{-1/2} \leq
\exp(-n\eta/8)\sqrt{\eta/c(0)} \; .\]
and the second claim of Theorem~\ref{theo:21ga,1} is satisfied. 

\subsection{Proofs of the remaining claims.}
We start with a lemma which extracts some estimates from the
discussion of the preceding section. 
\begin{lem}\label{lem:22ga,1}
There are constants $K_5, K_6$ such that for all $\eta$ sufficiently small
and $n>1$, the following estimates hold:
\[ \begin{array}{ccc} \Re w_n(\eta) &\geq& \Re w_0 + K_5
  \min(n,\eta^{-1})\exp(n\eta/4)\\
|\Im w_n(\eta)| &\leq & K_6 |\Re w_n(\eta)|  \max(\frac{\log n}{n},-\eta\log\eta)\end{array}   \; .\]
\end{lem}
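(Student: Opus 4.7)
The plan is to combine the explicit formula~(\ref{equ:22gp,1}) for the linearized iteration $\gamma_0^n$ with the comparison estimate of Lemma~\ref{lem:21gp,2}, splitting the argument at the critical time $n = N(\eta) = \lfloor 1/\eta\rfloor$ already introduced in~(\ref{equ:22ga,1}). Below $N(\eta)$, the dynamics is essentially in its parabolic phase: $(1+\eta)^n$ is bounded by $e$ and the $c(0)n$ drift from $\gamma_0$ dominates the logarithmic correction in Lemma~\ref{lem:21gp,2}. Above $N(\eta)$, $\gamma_0^n$ grows exponentially and we have already derived~(\ref{equ:22gp,2}), which gives the $\exp(n\eta/4)\eta^{-1}$ growth.

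For the first inequality I would proceed as follows. Writing $\Re w_n(\eta) \ge \Re\gamma_0^n(w_0,\eta) - |\gamma^n(w_0,\eta)-\gamma_0^n(w_0,\eta)|$ and using the elementary estimate $(1+\eta)^n - 1 \geq n\eta$, I get from~(\ref{equ:22gp,1}) that $\Re\gamma_0^n(w_0,\eta) \ge \Re w_0 + c(0)n$. Combined with Lemma~\ref{lem:21gp,2} and the bound $(1+\eta)^n\le e$ in the range $n\le N(\eta)$, this yields $\Re w_n(\eta)\ge \Re w_0 + c(0)n - e(K_3+K_4\log n)$, which absorbs into $\Re w_0 + K_5 n$ for $n\ge 2$ with a suitable constant (for $n=2$ we just take a smaller constant, using $\min(n,\eta^{-1})\exp(n\eta/4)\le ne^{1/4}$). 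For $n>N(\eta)$, inequality~(\ref{equ:22gp,2}) is already precisely the desired bound with $\min(n,\eta^{-1})=\eta^{-1}$.

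For the second inequality I would start from the identity $\Im\gamma_0^n(w_0,\eta)=(1+\eta)^n\Im w_0$, which holds because $c(0)$, $\eta$ and the first coefficient are real, and write
\[
|\Im w_n(\eta)|\le (1+\eta)^n|\Im w_0| + (1+\eta)^n\bigl(K_3 + K_4\min(\log n,-\log\eta)\bigr).
\]
In the regime $n\le N(\eta)$, $(1+\eta)^n\le e$ and $\min=\log n$, while the first claim gives $\Re w_n\ge K_5 n$; dividing shows $|\Im w_n|/\Re w_n \lesssim \log n/n$ (absorbing the constant $|\Im w_0|$, bounded via $|\arg z_0|<\alpha_0$). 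In the regime $n>N(\eta)$, $\min=-\log\eta$, and I would refine the lower bound on $\Re w_n$ by going back to~(\ref{equ:22gp,1}) and keeping the dominant factor: since $c(0)\eta^{-1}$ dominates $K_3+K_4(-\log\eta)$ for $\eta$ small, Lemma~\ref{lem:21gp,2} gives $\Re w_n\gtrsim (1+\eta)^n\eta^{-1}$. Therefore
\[
\frac{(1+\eta)^n|\Im w_0|}{\Re w_n}\lesssim \eta|\Im w_0|,\qquad \frac{(1+\eta)^n(-\log\eta)}{\Re w_n}\lesssim -\eta\log\eta,
\]
and both are absorbed into $K_6(-\eta\log\eta)$ for $\eta$ small.

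The main obstacle is the second claim in the regime $n>N(\eta)$: a naive application of the first claim using only $\exp(n\eta/4)\eta^{-1}$ is insufficient to control the imaginary part, since $(1+\eta)^n$ grows like $e^{n\eta}$ rather than $e^{n\eta/4}$. The saving observation is that the exponential factor cancels exactly when one uses the sharper lower bound $\Re w_n\gtrsim (1+\eta)^n\eta^{-1}$ coming directly from the explicit linearized iteration, rather than from~(\ref{equ:22gp,2}). The two regimes join smoothly at $n\sim 1/\eta$, where $\log n/n$ and $-\eta\log\eta$ coincide up to constants.
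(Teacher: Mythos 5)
Your argument follows the paper's proof almost exactly: both split at $N(\eta)=\lfloor 1/\eta\rfloor$, both control $\Im w_n$ by noting $\Im\gamma_0^n(w_0,\eta)=(1+\eta)^n\Im w_0$ and bounding the deviation via Lemma~\ref{lem:21gp,2}, and both use the refined lower bound $\Re w_n\gtrsim (1+\eta)^n\eta^{-1}$ (estimate~(\ref{equ:22gp,3}) in the paper) so that the exponential factors cancel in the regime $n>N(\eta)$ --- your closing remark about why $\exp(n\eta/4)\eta^{-1}$ alone would not suffice is precisely the point of that estimate.

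The one place where your route differs, and where it has a small hole, is the first inequality for $n\le N(\eta)$. You derive $\Re w_n\ge \Re w_0+c(0)n-e(K_3+K_4\log n)$ from the linearization comparison and claim this ``absorbs into $\Re w_0+K_5 n$,'' taking a smaller constant for small $n$. But nothing prevents $K_3$ from being large, in which case $c(0)n-e(K_3+K_4\log n)$ is negative for $n$ in a bounded range, and no choice of positive $K_5$ repairs that: shrinking $K_5$ cannot make a negative quantity dominate $K_5 n$. The paper avoids the comparison altogether here and uses Lemma~\ref{lem:23ga,2} directly: $\Re\gamma(w,\eta)>\Re w+c(0)$ on $\{w:\Re w\ge W_0\}$ gives $\Re w_n\ge \Re w_0+nc(0)$ exactly, with no error term, for every $n$. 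Substituting that one step makes your proof complete; everything else, including the treatment of the $|\Im w_0|$ term and the matching of the two regimes at $n\sim\eta^{-1}$, is sound.
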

\begin{proof}
The first estimate is  just the estimate~(\ref{equ:22gp,2}) in the
case when $n>N(\eta)$ and minimum comes from using the 
estimate of 
Lemma~\ref{lem:23ga,2} for smaller $n$. 

To prove the second one, we need to return to Lemma~\ref{lem:21gp,2}
and formula~(\ref{equ:22gp,1}). Observe that $\gamma_0$ preserves the
imaginary part and so 
\[ |\Im w_n(\eta)| \leq (1+\eta)^n \left[ K_3 + |\Im w_0| +
  K_4\min(\log n,-\log \eta) \right]\;
.\]
Since $w_0$ is the fixed initial point, it does not depend on
$\eta$. So we can rewrite the estimate as for $n>1$ and $\eta$
sufficiently small. 
\[ |\Im w_n(\eta)| \leq L_1 (1+\eta)^n \min(\log n,-\log \eta)\; .\]
 
For $n\leq N(\eta)$  the growth of $\Re w_n(\eta)$ is at least linear
by Lemma~\ref{lem:23ga,1} while the exponential term in the preceding
estimate is bounded by $e$ and so the second estimate will hold with
$\frac{\log n}{n}$ picked in the maximum. 

For $n > N(\eta)$, we have to come back to
estimate~(\ref{equ:22gp,3}), which yields
\[ \Re w_n(\eta) \geq (1+\eta)^n c(0)\eta^{-1}(0.9 - (1+\eta)^{-n})
\geq (1+\eta)^n c(0)\eta^{-1}/2 \]
for $\eta$ sufficiently small. Now the second estimate also follows
with the second possibility in the maximum. 
\end{proof}

\begin{lem}\label{lem:22gp,1}
There is a constant $K_7$ such that for every $\eta$ sufficiently
small and $n>1$
\[ |\Im z_n(\eta)| \leq K_7 \max(n^{-3/2}\log n, -\eta^{3/2}\log\eta) \exp(-n\eta/8)   \; .\]
\end{lem}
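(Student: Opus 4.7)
The plan is to derive the bound directly from the second claim of Theorem~\ref{theo:21ga,1} together with the imaginary-part estimate of Lemma~\ref{lem:22ga,1}, working with $z_n(\eta) = w_n(\eta)^{-1/2}$ in polar form.

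Write $w_n = |w_n|e^{i\theta_n}$ with $\theta_n = \arg w_n \in (-\pi/2,\pi/2)$, valid since $\Re w_n(\eta) > 0$ throughout the iteration (from the discussion preceding Lemma~\ref{lem:23ga,2}). The elementary inequality $|\arctan x|\leq |x|$ gives $|\theta_n| \leq |\Im w_n(\eta)|/\Re w_n(\eta)$, so
\[ |\Im z_n(\eta)| = |w_n(\eta)|^{-1/2}\,|\sin(\theta_n/2)| \leq \frac{1}{2}|z_n(\eta)|\,|\theta_n| \leq \frac{1}{2}|z_n(\eta)|\cdot\frac{|\Im w_n(\eta)|}{\Re w_n(\eta)}. \]
Feeding in the bound $|\Im w_n|/\Re w_n \leq K_6\max(\log n/n,-\eta\log\eta)$ from Lemma~\ref{lem:22ga,1} and the second claim of Theorem~\ref{theo:21ga,1} for $|z_n(\eta)|$, one obtains
\[ |\Im z_n(\eta)| \leq L_1 \sqrt{\max(n^{-1},\eta)}\cdot\max\left(\frac{\log n}{n},\,-\eta\log\eta\right)\cdot\exp(-n\eta/8). \]

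The final step is to combine the two $\max$ factors. Both switch regimes at the same threshold $n = N(\eta) = \lfloor 1/\eta\rfloor$ from~(\ref{equ:22ga,1}): for $n\leq N(\eta)$ the first factor equals $n^{-1/2}$ and $\log n/n$ dominates $-\eta\log\eta$ (since $\log x/x$ is decreasing past $x=e$ and the two quantities coincide up to a bounded constant at $n = N(\eta)$), giving product $\leq L_2\, n^{-3/2}\log n$; for $n>N(\eta)$ the first factor equals $\sqrt{\eta}$ and $-\eta\log\eta$ dominates $\log n/n$, giving product $\leq L_3\,(-\eta^{3/2}\log\eta)$. In each regime the product is bounded by $\max(n^{-3/2}\log n,\,-\eta^{3/2}\log\eta)$, which yields the stated estimate with $K_7 = L_1\max(L_2,L_3)$. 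No step presents a genuine obstacle; the only care needed is observing that both $\max$-factors switch regimes at the \emph{same} threshold $N(\eta)$, which is precisely what makes the two maxima combine cleanly without a cross term.
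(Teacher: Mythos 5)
Your proof is correct and follows essentially the same route as the paper: both reduce $|\Im z_n(\eta)|$ to a product of a bound on $|z_n(\eta)|$ (equivalently $(\Re w_n)^{-1/2}$, which the paper takes from the first estimate of Lemma~\ref{lem:22ga,1} where you invoke the already-established second claim of Theorem~\ref{theo:21ga,1}) with the ratio $|\Im w_n|/\Re w_n$ controlled by Lemma~\ref{lem:22ga,1}, and then combine the two maxima by noting they switch regimes at the same threshold $N(\eta)$. Your explicit verification that the cross terms cause no trouble is slightly more careful than the paper's one-line remark to the same effect.
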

\begin{proof}
Clearly, 
\[ |\Im z_n(\eta)| = \left| \Im(\frac{1}{\sqrt{\Re w_n(\eta) + i \Im w_n(\eta)}})\right| =
(\Re w_n)^{-1/2} |\Im \left( 1+i\frac{\Im w_n(\eta)}{\Re w_n(\eta)}
  \right)^{-1/2} |\; .\]
From Lemma~\ref{lem:22ga,1} the ratio of imaginary to the real part
is small for small $\eta$ and large $n$, and so for all $n\leq L_1$,
where $L_1$ does not depend on $\eta$, because the estimate on the
ratio is uniform, and $\eta$ sufficiently small
\[  |\Im \left( 1+i\frac{\Im w_n(\eta)}{\Re w_n(\eta)}
  \right)^{-1/2}| \leq \left| \frac{\Im w_n(\eta)}{\Re w_n(\eta)} \right|\; .\]
So, 
\[ |\Im z_n(\eta)| \leq \left| (\Re w_n)^{-1/2} \frac{\Im w_n(\eta)}{\Re
  w_n(\eta)}\right| \; .\]
From Lemma~\ref{lem:22ga,1}, 
\[  |\Im z_n(\eta)| \leq K_6/\sqrt{K_5}\max(\frac{\log
  n}{n},-\eta\log\eta)\sqrt{\max(n^{-1},\eta)} \exp(-n\eta/8) = \]
\[ =K_7 \max(n^{-3/2}\log n, -\eta^{3/2}\log\eta) \exp(-n\eta/8) \]
since both maxima pick up their for or second value simultaneously
depending on whether $n\leq N(\eta)$ or not.  
\end{proof}

\paragraph{Proof of the third claim.}
From the form of $\gamma(z,\eta)$ we conclude that 
\[ |z_{n+1}(\eta)-z_n(\eta)| \leq L_1
|z_n(\eta)|\max(\eta,|z_n(\eta)|^2) \leq L_2 |z_n(\eta)|\max(\eta,n^{-1}) \]
using estimate~(\ref{equ:23ga,1}). $L$ as usual
denote constant independent of $n$ and $\eta$. 

We see that for $n\leq N(\eta)$, we get 
\[ |z_{n+1}(\eta)-z_n(\eta)| \leq L_3 n^{-3/2} \; .\]

For $n>N(\eta)$ the maximum will be $\eta$ and from the second claim
of the Theorem, we get 
\[  |z_{n+1}(\eta)-z_n(\eta)| \leq L_4 \eta^{3/2} \exp(-n\eta/8) \; .\]
By Lemma~\ref{lem:23ga,1}, with $\beta=3/2$, for any positive $n$, 
\[ \exp(-n\eta/8) \leq n^{-3/2} \eta^{-3/2} (12)^{3/2} \]
and in conclusion 
\[ |z_{n+1}(\eta)-z_n(\eta)| \leq L_5 n^{-3/2} \; .\]

\paragraph{Proof of the fourth claim.}
Taking into account that $\eta$ and $\hat{c}(\eta)$ are real, we
obtain an estimate
\[ |\Im z_{n+1}(\eta) - \Im z_n(\eta) | \leq \] 
\[ \leq \eta |\Im z_n(\eta)| +
3\hat{c}(\eta) |\Im z_n(\eta)| |z_n(\eta)|^2 + |\hat{d}(\eta)|\eta
|z_n(\eta)|^4 + L_1 |z_n(\eta)|^5 \; .\]

Taking into account the estimates of the second claim of
Theorem~\ref{theo:21ga,1} and of Lemma~\ref{lem:22gp,1} gives    
\[ L_2^{-1} |\Im z_{n+1}(\eta) - \Im z_n(\eta) | \leq \] 
\[ \leq \max(n^{-3/2}\log
n, -\eta^{3/2}\log\eta) \exp(-n\eta/8) \left[
  \eta+\max(n^{-1},\eta)\exp(-n\eta/4) \right] + \]
\[ + \max(n^{-2},\eta^2)\exp(-n\eta/2)\left[ \eta +
  \sqrt{\max(n^{-1},\eta)}\right]\; .\]

For $n\leq N(\eta)$, we can continue by 
\[  L_2^{-1} |\Im z_{n+1}(\eta) - \Im z_n(\eta)| \leq L_3 n^{-5/2}\log
n \; .\]

For $n>N(\eta)$ we get
\[  L_2^{-1} |\Im z_{n+1}(\eta) - \Im z_n(\eta)| \leq \]
\[ \leq L_4
\eta^{5/2}\log\eta^{-1}\exp(-n\eta/8) + \eta^{5/2}\exp(-n\eta/2) \leq
2L_4  \eta^{5/2}\log\eta^{-1}\exp(-n\eta/8) \]
for $\eta$ sufficiently small. Lemma~\ref{lem:23ga,1} leads to 
$\exp(-n\eta/8)\leq (20/e)^{5/2} \eta^{-5/2}n^{-5/2}$ and hence to  
\[   L_2^{-1}|\Im z_{n+1}(\eta) - \Im z_n(\eta)| \leq L_5 \log\eta^{-1}
n^{-5/2} < L_5 n^{-5/2}\log n \]
which gives the fourth claim. 

\section{Unimodal Fibonacci maps}\label{sfuni}
We prove that the limiting drift is 
infinite in this case following the scheme which is used for
the Fibonacci covering maps. 
This result is not new and could be extracted from~\cite{bkns}.

We will use the same notations for analogous notions
for the Fibonacci covering and unimodal maps.
By $\ell$ we will denote any {\em even} integer greater than $2$.
The following facts can be obtained from~\cite{bkns},~\cite{ns},~\cite{E},~\cite{buff}.
\begin{fact}\label{uni}
For every $\ell$ there are parameters
$\tau_{\ell}>1$, $x_{0,\ell}\in (0, 1)$ and a map  $\phi_\ell$ with a representation 
\[ \phi_{\ell}(x) = \left( E_\ell(x) \right)^{\ell} \]
as follows:
\begin{itemize}
\item
$E_\ell:I \rightarrow
(-\sqrt[\ell]{\tau^2_{\ell}}, \sqrt[\ell]{\tau^4_{\ell}})$ is a decreasing diffeomorphism from the
Epstein class defined on a real interval $I\supset [0,1]$,   
\item $E_\ell(x_{0,\ell})=0$,   $E_\ell(0) = 1$, 
\item the {\em fixed point equation} holds for all  $x$ where both sides are defined:
\begin{equation}\label{fequni} 
\ \tau_\ell^{-1} \phi_\ell \tau_\ell(x) = (\tau_\ell \phi_{\ell} \tau_\ell^{-1})\circ \phi_\ell(x)\ .
\end{equation} 
\item $T_1\le \tau_{\ell}\le T_2$,
$c_1\le x_{0,\ell}<\tau_\ell x_{0,\ell}<c_2$ for all $\ell$ and some $1<T_1<T_2<\infty$, $0<c_1<c_2<1$.
\end{itemize}
\end{fact}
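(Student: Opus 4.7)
The plan is to assemble the statement out of well-established pieces from the cited works, organizing them so that the subsequent Fibonacci-unimodal drift analysis can proceed in parallel with the covering-map treatment of the preceding sections. The existence of a Fibonacci renormalization fixed point for unimodal maps of critical order $\ell$ is the original result of \cite{bkns}; it produces, for each even $\ell>2$, a unimodal map $\phi_\ell$ and a scaling constant together satisfying the Fibonacci functional equation. What remains is to cast this fixed point in a form parallel to Fact~\ref{fa:13xp,1} and Fact~\ref{fact:24ga,1}, with the unimodal sign and orientation conventions.

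First I would set up the Fibonacci fixed point equation in the unimodal normalization. Here $\tau_\ell$ plays the role of $\tau_\ell$ in the covering case, but now $\tau_\ell>1$ because, in the unimodal setting, the renormalization scaling is by the position of the nearest iterate to the critical value rather than by an orientation-reversing affine map. The equation (\ref{fequni}) is then the standard period-doubling-style Fibonacci relation $\tau_\ell^{-1}\phi_\ell\tau_\ell=(\tau_\ell\phi_\ell\tau_\ell^{-1})\circ\phi_\ell$, which one derives as in \cite{lsuniv} but with the appropriate unimodal renormalization substitutions; the normalizations $\phi_\ell(0)=1$ and $\phi_\ell(x_{0,\ell})=0$ are the same as in the covering case up to replacing the critical point by the unique unimodal critical point.

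Second I would install the Epstein-class factorization $\phi_\ell=E_\ell^\ell$. Because $\phi_\ell$ arises as a limit of compositions built from iterates of an Epstein-class map (a fact exploited in \cite{ns,E,buff}), and because the Epstein class is closed under the operations used in the renormalization, the critical factor $E_\ell$ automatically lies in the Epstein class. That $E_\ell$ is \emph{decreasing} onto the asserted interval, rather than increasing as in Fact~\ref{fact:24ga,1}, is forced by the unimodal geometry: the critical value is a maximum, so past $x_{0,\ell}$ the map turns around; the range $(-\sqrt[\ell]{\tau_\ell^2},\sqrt[\ell]{\tau_\ell^4})$ then follows by evaluating at the relevant preimages $\tau_\ell^{-1}$ and $\tau_\ell^{-2}$ using the functional equation.

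Third, I would extract the uniform bounds on $\tau_\ell$ and $x_{0,\ell}$ from the a priori bounds / complex bounds for Fibonacci renormalization together with convergence of the $\ell\to\infty$ limit established in \cite{buff}. Compactness of the family of Fibonacci Epstein-class fixed points, together with the fact that this family has a limit as $\ell\to\infty$ with $\tau_\infty\in(1,\infty)$ and $x_{0,\infty}\in(0,1)$, pins $\tau_\ell$ between constants $T_1,T_2$ and $x_{0,\ell},\tau_\ell x_{0,\ell}$ between $c_1,c_2\in(0,1)$. The anticipated main obstacle is ensuring that all these ingredients, scattered across \cite{bkns,ns,E,buff}, are stated with mutually compatible normalizations so that the formulas line up exactly as claimed; the underlying analytic content is already in the literature, and no new estimate is required here.
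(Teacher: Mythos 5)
Your proposal matches the paper's treatment: the paper states this as a Fact with no proof beyond the single remark that it ``can be obtained from'' \cite{bkns}, \cite{ns}, \cite{E}, \cite{buff}, which is exactly the citation-assembly you describe. One attribution correction: the existence of the Fibonacci fixed point of renormalization is due to Buff \cite{buff} (in Epstein's framework \cite{E}), not to \cite{bkns}, whose role here is to supply the Fibonacci combinatorics, the a priori real bounds underlying the uniform estimates on $\tau_\ell$ and $x_{0,\ell}$, and the wild-attractor context.
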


\paragraph{Induced dynamics.}
For each $\ell$ there is a unique point $X_\ell\in (0, x_{0,\ell})$ 
so that $\phi_\ell(X_\ell)=\tau_{\ell} X_{\ell}$.
It follows from the fixed point equation~(\ref{fequni}) that 
$\phi_{\ell}(\tau_{\ell} X_{\ell})=\tau_{\ell}^3 X_{\ell}$. Note also that 
$\phi_\ell(X_\ell)=\tau_{\ell} X_{\ell}<\tau_\ell x_{0,\ell}=\phi_\ell(x_{0,\ell}/\tau_\ell)$
where the latter equality follows from the equation~(\ref{fequni}).
Hence, $x_{0,\ell}<\tau_{\ell} X_{\ell}$. 
Using Fact~\ref{uni},
$0<c_1/T_2<X_\ell<x_{0,\ell}<\tau_\ell X_\ell<\tau_\ell x_{0,\ell}<c_2<1$ for all $\ell$.

For each $\phi_\ell$ consider its {\em inducing domain} $J_\ell=(X_{\ell}, \tau_{\ell} X_{\ell})$.
\begin{defi}\label{induni}
The {\em induced dynamics} 
\[ \Phi_{\ell} :\:  J_\ell \rightarrow J_\ell\]  is defined with the exception of
countably many points and in the form
\[ \Phi_{\ell}(x) = \tau^{-m(x)}_{\ell} \phi_{\ell}(x) \]
where $m(x)=m(x,\ell)$ is the unique integer for which 
$\tau^{-m(x)}_{\ell} \phi_{\ell}(x) \in J_\ell$. 
\end{defi}
Function $m(x)$ takes values $0,-1,-2,...$ on $(X_\ell, x_{0,\ell})$ and 
$2,1,0,-1,-2,...$ on $(x_{0,\ell}, \tau_{\ell} X_{\ell})$. A crucial difference with 
the drift function for the Fibonacci covering maps is that $-m(x)$ is bounded from below. 
The inverse branches of $\Phi_{\ell}$ can be written as
\begin{equation}\label{bruni}
 \psi_{m,\ell}^\pm(x) = (\phi^{-1}_{\ell})_\pm(\tau^m_{\ell} x) \; .
\end{equation}
Here $(\phi^{-1}_{\ell})_\pm(x)=E_\ell^{-1}(\pm x^{1/\ell})$ and we choose $-$ if 
$\phi^{-1}_{\ell}(x)\in (x_{0,\ell}, \tau_{\ell} X_{\ell})$ and $+$ if 
$\phi^{-1}_{\ell}(x)\in (X_\ell, x_{0,\ell})$ so that
$(\phi^{-1}_{\ell})_{+}'<0$,  $(\phi^{-1}_{\ell})_{-}'>0$. 

It follows from Fact~\ref{uni} that $(\phi^{-1}_{\ell})_{+}$ extends to
a univalent map in  $\CC\setminus\{x\in\RR :\: x\notin [0, 1]\}$ and $(\phi^{-1}_{\ell})_{-}$
extends to a univalent map in $\CC\setminus\{x\in\RR :\: x\notin [0, \tau_\ell^2]\}$
Therefore,
\begin{coro}\label{corouni}
For all $\ell$ sufficiently large the induced map $\Phi_{\ell}$ is
uniformly expanding in the Poincar\'{e} metric of $\CC\setminus\{
x\in\RR :\: x\notin [0, 1]\}$.
\end{coro}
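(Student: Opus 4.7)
}

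The plan is to follow the template of Corollary~\ref{coro:24gp,1} for the covering case: show that each inverse branch $\psi_{m,\ell}^\pm$ of $\Phi_\ell$ extends to a univalent self-map of the slit plane $\Omega := \CC\setminus\{x\in\RR:x\notin[0,1]\}$ which is not an automorphism, and then combine the Schwarz--Pick lemma with a normal families argument to upgrade pointwise strict contraction into a uniform bound on $J_\ell$.

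First I would verify the extensions and image control. By formula~(\ref{bruni}), $\psi_{m,\ell}^\pm(z)=(\phi_\ell^{-1})_\pm(\tau_\ell^m z)$. Since $(\phi_\ell^{-1})_+$ extends univalently to $\Omega$ and $(\phi_\ell^{-1})_-$ to the strictly larger slit plane $\CC\setminus\{x\in\RR:x\notin[0,\tau_\ell^2]\}$, and since $m\le 0$ on the $+$ side while $m\le 2$ on the $-$ side, multiplication by $\tau_\ell^m$ sends $\Omega$ into the respective domain; hence each $\psi_{m,\ell}^\pm$ is a univalent holomorphic map on $\Omega$. Its real trace is $((\phi_\ell^{-1})_+(\tau_\ell^m),x_{0,\ell})$ or $(x_{0,\ell},(\phi_\ell^{-1})_-(\tau_\ell^m))$, both contained in $[0,c_2]$ by Fact~\ref{uni}; the upper bound on the $-$ side uses the identity $\phi_\ell(\tau_\ell x_{0,\ell})=\tau_\ell^2$, obtained by evaluating the fixed point equation~(\ref{fequni}) at $x=x_{0,\ell}$, which yields $(\phi_\ell^{-1})_-(\tau_\ell^2)=\tau_\ell x_{0,\ell}\le c_2$. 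Since the branches are real on $\RR$ and univalent, the reflection principle shows that the image off $\RR$ lies in $\CC\setminus\RR\subset\Omega$. Thus $\psi_{m,\ell}^\pm(\Omega)\subset\Omega$ with real trace uniformly bounded above by $c_2<1$, so the image is a proper subset of $\Omega$ and $\psi_{m,\ell}^\pm$ is not an automorphism. Schwarz--Pick then gives the pointwise strict contraction $\|(\psi_{m,\ell}^\pm)'(z)\|_\Omega<1$.

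To upgrade this to a uniform bound on $J_\ell$, uniform in the admissible $m$ and in $\ell$ sufficiently large, I would argue by contradiction. Suppose there are sequences $(m_k,\ell_k)$ and $y_k\in J_{\ell_k}$ with $\|(\psi_{m_k,\ell_k}^\pm)'(y_k)\|_\Omega\to 1$. The family $\{\psi_{m,\ell}^\pm\}$ omits the uncountable set $(-\infty,0]\cup[1,\infty)$, so it is normal on $\Omega$ by Montel's theorem. Pass to a subsequence with $\psi_{m_k,\ell_k}^\pm\to\psi^*$ locally uniformly and $y_k\to y^*\in[c_1/T_2,c_2]\subset\Omega$. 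By Hurwitz either $\psi^*$ is constant---excluded because it would force the Schwarz--Pick ratio to be $0$ at $y^*$---or $\psi^*$ is univalent with image in $\Omega$. In the univalent case $\|(\psi^*)'(y^*)\|_\Omega=1$ forces $\psi^*$ to be an automorphism of $\Omega$ by the equality case of Schwarz--Pick; but any automorphism of $\Omega$ maps the real trace $(0,1)$ onto itself, whereas $\psi^*((0,1))$ is bounded above by $c_2<1$, giving the contradiction.

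The main obstacle is the uniform step: running the Schwarz--Pick together with a normal families argument carefully, in particular showing that no limiting map can be an automorphism of $\Omega$. Everything else follows routinely from the Epstein class representation of Fact~\ref{uni} and from the fixed point equation~(\ref{fequni}).
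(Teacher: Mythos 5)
Your overall strategy is the right one and is exactly what the paper leaves implicit: the text offers no argument beyond ``Therefore,'' the substance being the two univalent extension claims for $(\phi_\ell^{-1})_\pm$, with the quantitative mechanism borrowed from the covering case (Lemma~\ref{lem:24gp,2} plus Corollary~\ref{coro:24gp,1}, i.e.\ all branches map the slit interval into a fixed compact subinterval, whence a definite Schwarz--Pick contraction). Your verification of the extensions, of the image control via $\phi_\ell(\tau_\ell x_{0,\ell})=\tau_\ell^2$, and of the reflection argument showing $\psi_{m,\ell}^\pm(\Omega)\subsetneq\Omega$ is correct, and your normal-families route to uniformity is a legitimate alternative to the paper's more quantitative ``factor through a strictly smaller slit plane'' device.

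There is, however, one step that is wrong as stated: it is not true that every automorphism of $\Omega=\CC\setminus\{x\in\RR:\,x\notin[0,1]\}$ maps $(0,1)$ onto itself. Since $\Omega$ is simply connected, $\mathrm{Aut}(\Omega)$ is conjugate to $\mathrm{Aut}(\D)$, a three-real-parameter group, and only a one-parameter subgroup (together with the reflection in $\Re z=\tfrac12$) preserves the geodesic $(0,1)$; a generic automorphism moves it. The contradiction you want survives a one-line repair: an automorphism of $\Omega$ is surjective, whereas the subsequential limit $\psi^*$ cannot be. Indeed $\psi^*$ is real on $(0,1)$ with $\psi^*((0,1))\subset[0,c_2]$, and if $\psi^*$ is univalent the reflection relation $\psi^*(\bar z)=\overline{\psi^*(z)}$ forces $\psi^*(\Omega)\cap\RR=\psi^*((0,1))$, so $\psi^*(\Omega)$ omits the nonempty open set $(c_2,1)\subset\Omega$ and $\psi^*$ is not onto. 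With that substitution your argument closes. (Two smaller points you should make explicit: the limit constant in the Hurwitz dichotomy lies in $[c_1/T_2,c_2]\subset\Omega$ because $\psi_{m_k,\ell_k}^\pm(y_k)$ stays in that fixed compact set, which is what makes the ``Schwarz--Pick ratio tends to $0$'' conclusion valid; and for the $+$ branch with $m=0$ the real trace of the image is $(0,x_{0,\ell})$, reaching down to the boundary point $0$, so the image is proper but not compactly contained in $\Omega$ --- your argument only uses properness, so this is harmless, but it is why the pointwise contraction alone does not immediately give uniformity.)
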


Let $I_0$, $I_1$ be fixed closed segments such
that 
\begin{equation}\label{intuni}
I_0 \subset  \mbox{int}\:(I_1) \subset I_1 \subset (0,1) \; .
\end{equation}
The real bounds from Fact~\ref{uni} imply that if $I_0$ is close enough to $(0,1)$ then
for every $m$ and $\ell$ sufficiently large $\psi_{m,\ell}^\pm(I_1)\subset I_0$. 

\paragraph{Transfer operators and invariant densities.}
The transfer operator is given by the formula
\begin{equation}\label{opuni}
P_{\ell} f(z) = \sum_{m\le 2} (\psi_{m,\ell}^{-})'(z) f\circ \psi_{m,\ell}^{-}(z)-
\sum_{m\le 0} (\psi_{m,\ell}^{+})'(z) f\circ \psi_{m,\ell}^{+}(z) \; .
\end{equation}
\begin{fact}\label{densuni}
For every $\ell$ sufficiently large, the transfer
operator acting on the space $L_1(J_\ell)$
has unique non-negative invariant density $f_{\ell} =
\lim_{n\rightarrow\infty} P_{\ell}^n {\bf e}$, which in fact is 
positive, where ${\bf e}$ is a constant density.  
\end{fact}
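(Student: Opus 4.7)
The plan is to follow exactly the scheme used for Fact~\ref{fact:24gp,2}, reducing Fact~\ref{densuni} to a standard Folklore Theorem for piecewise expanding Markov maps with countably many branches once the geometric hypotheses are verified. The ingredients needed are: (a) a uniformly expanding structure in an appropriate hyperbolic metric, (b) uniformly bounded distortion of inverse branches, and (c) finiteness of the total length of images of branches. All three will come from results already in place.

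First, I would observe that $\Phi_\ell$ is a Markov map of $J_\ell$ whose inverse branches $\psi^\pm_{m,\ell}$ from formula~(\ref{bruni}) enumerate all full branches mapping onto $J_\ell$. By Corollary~\ref{corouni} these branches are uniform contractions in the Poincar\'e metric of $\CC \setminus \{x \in \RR :\: x \notin [0,1]\}$, hence also uniform contractions in the Euclidean metric on any compact subset of $(0,1)$; in particular on the closed segment $I_1$ from~(\ref{intuni}). Combined with the fact that $\psi^\pm_{m,\ell}(I_1) \subset I_0$ for $\ell$ large, this gives uniform exponential shrinking: $|\psi^\pm_{m_n,\ell} \circ \cdots \circ \psi^\pm_{m_1,\ell}(I_1)| \leq C \rho^n$ for some $\rho<1$ independent of the choice of indices.

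Next, I would establish bounded distortion. The Epstein-class/univalent extensions stated below~(\ref{bruni}) say that each $(\phi^{-1}_\ell)_\pm$ extends univalently to $\CC$ slit along $[0,\infty)$ or $[0,\tau_\ell^2]$; rescaling by $\tau_\ell^m$ preserves this, so every $\psi^\pm_{m,\ell}$, and therefore every composition $\tilde\psi = \psi^\pm_{m_n,\ell} \circ \cdots \circ \psi^\pm_{m_1,\ell}$, has a univalent extension to a fixed neighborhood of $I_1$. Koebe's lemma then yields a constant $K$, independent of $n$, the sequence $(m_k,\pm)$, and $\ell$, such that $|\tilde\psi'(z)|\le K\,|\tilde\psi(J_\ell)|/|J_\ell|$ on $I_1$, exactly as in Lemma~\ref{lem:24gp,3}. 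Since branches of $\Phi_\ell^n$ partition $J_\ell$ up to a null set, summing over all $n$-fold compositions gives a uniform bound $\|P_\ell^n\|_{C^0(I_0)} \le K_2$, the analogue of Lemma~\ref{lem:25ga,1}.

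With these bounds, the argument proceeds exactly as for the covering case. On the space $\mathcal{F}_0$ of functions bounded and holomorphic on a disk around $I_0$ (real on the real line), the operator $P_\ell$ is compact by Montel's theorem (reproducing Lemma~\ref{lem:24gp,4}: bounded image consists of holomorphic functions on a larger disk, hence normal), and $\|P_\ell^n\|\le K_2$. The sequence $P_\ell^n\mathbf{e}$ therefore has a convergent subsequence in $\mathcal{F}_0$; any limit is a nonnegative fixed point of $P_\ell$, real-analytic on $J_\ell$, and of total mass $1$. The Folklore Theorem for piecewise expanding Markov maps with the distortion bound above gives uniqueness of the invariant density in $L_1(J_\ell)$ and convergence $P_\ell^n\mathbf{e}\to f_\ell$ in $L_1$; combined with the $\mathcal{F}_0$ compactness this forces the full sequence to converge. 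Positivity is automatic: the limit density is real-analytic and nonnegative, and since preimages of any point are dense in $J_\ell$, it cannot vanish on a set of positive measure without vanishing identically.

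The only place requiring mild care, and hence the main obstacle, is the summability of formula~(\ref{opuni}): unlike the covering case, here one must sum two series indexed by $m\le 2$ and $m\le 0$. However, the estimate from Koebe applied to the branches $\psi^\pm_{m,\ell}$ alone, together with disjointness of their images in $J_\ell$ and $|J_\ell|\le 1$, gives $\sum_m |\psi^\pm_{m,\ell}(J_\ell)| \le |J_\ell| < \infty$, so convergence in the operator norm on $\mathcal{F}_0$ is immediate. Everything else is a verbatim transcription of the arguments in Section~\ref{1}.
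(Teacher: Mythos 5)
Your proposal is correct and follows essentially the same route as the paper, which disposes of Fact~\ref{densuni} in one line by invoking the Folklore Theorem in light of Corollary~\ref{corouni}; you have simply spelled out the standard hypotheses (Markov structure, uniform expansion, Koebe distortion from the univalent extensions, summability of~(\ref{opuni})) that the paper leaves implicit. No gaps.
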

This follows from the Folklore Theorem in the light of
Corollary~\ref{corouni}.  

Let $D_0$, $D_1$ be disks with diameters $I_0$, $I_1$ respectively.
The functions $\psi_{m,\ell}^{\pm}$ 
admit holomorphic extensions to the disk whose diameter is $(0,1)$.
Based on Fact~\ref{uni} and~(\ref{intuni}),
$\psi_{m,\ell}(D_1)\subset D_0$ for all $m$ and $\ell$ sufficiently large.
The functional space ${\cal F}_0$ will consist
of all functions bounded and holomorphic on $D_0$, real on the real
line, as well continuous
on $\overline{D_0}$, with the $C^0$ norm. 
Repeating considerations from the proofs of Lemma~\ref{lem:24gp,3}, 
Lemma~\ref{lem:24gp,4} and Lemma~\ref{lem:25ga,1}
we conclude that the operators $P_\ell$ are well defined and compact in the space ${\cal F}_0$,
the densities $f_{\ell}$ admit holomorphic continuations to $f_{0,\ell}\in {\cal F}_0$
and the sequence $(f_{0,\ell})$ is uniformly bounded in this space.

\paragraph{The Drift}
For every $\ell$ we define the {\em drift} as before:
\[ \vartheta(\ell) :=  -\int_{J_\ell} m(x,\ell) f_{\ell}(x)\;
dx \; .\]

We need an {\em associated map} 
\begin{equation}\label{assuni}
\begin{array}{ccc}
G_{\ell} & = & \tau_{\ell}^{-1} \phi_{\ell} \tau_{\ell}^{-1} \\
\end{array}
\end{equation}
It follows from the fixed point equation (see also~\cite{E},~\cite{buff}).
\begin{fact}\label{assunipro}
For any finite $\ell$,
\[ G_{\ell}(x_{0,\ell})=x_{0,\ell}, \ \ G_{\ell}'(x_{0,\ell})=-\frac{1}{\tau_\ell^{2/\ell}}\; . \]
This means that point $x_{0,\ell}$ is an attracting orientation reversing fixed point
of $G_{\ell}$ and $G_{\ell}(x_{0,\ell})\to -1$ as $\ell\to \infty$.
\end{fact}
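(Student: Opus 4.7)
The plan is to establish the fixed-point identity directly from the functional equation~(\ref{fequni}), then to compute the derivative by passing through the representation $\phi_\ell=E_\ell^\ell$ and extracting a \emph{signed} $\ell$-th root (direct differentiation degenerates at the critical point), and finally to read off the limit from the uniform bounds on $\tau_\ell$.

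First I would rewrite~(\ref{fequni}) as $\phi_\ell(\tau_\ell y)=\tau_\ell^2\,\phi_\ell\!\bigl(\tau_\ell^{-1}\phi_\ell(y)\bigr)$ and set $y=x_{0,\ell}/\tau_\ell$. The left-hand side becomes $\phi_\ell(x_{0,\ell})=0$, so $\phi_\ell(\tau_\ell^{-1}\phi_\ell(x_{0,\ell}/\tau_\ell))=0$. By Fact~\ref{uni} the representation $\phi_\ell=E_\ell^\ell$ with $E_\ell$ a diffeomorphism forces $x_{0,\ell}$ to be the unique zero of $\phi_\ell$, so $\tau_\ell^{-1}\phi_\ell(x_{0,\ell}/\tau_\ell)=x_{0,\ell}$, i.e.\ $\phi_\ell(x_{0,\ell}/\tau_\ell)=\tau_\ell x_{0,\ell}$. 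This immediately gives $G_\ell(x_{0,\ell})=\tau_\ell^{-1}\phi_\ell(\tau_\ell^{-1}x_{0,\ell})=x_{0,\ell}$.

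For the derivative, note that differentiating~(\ref{fequni}) straight away at $x=x_{0,\ell}/\tau_\ell$ collapses to $0=0$ because $\phi_\ell'(x_{0,\ell})=0$. I would instead substitute $\phi_\ell=E_\ell^\ell$ to recast the functional equation as $E_\ell(\tau_\ell x)^\ell=\tau_\ell^{2}\,E_\ell\!\bigl(\tau_\ell^{-1}E_\ell(x)^\ell\bigr)^\ell$, and extract the correct signed $\ell$-th root in a neighbourhood of $x=x_{0,\ell}/\tau_\ell$, where both inner expressions $E_\ell(\tau_\ell x)$ and $E_\ell(\tau_\ell^{-1}\phi_\ell(x))$ vanish. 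A brief sign check, using that $E_\ell$ is decreasing with unique zero at $x_{0,\ell}$ and that $\phi_\ell$ is decreasing on $(0,x_{0,\ell})$, shows that on each side of $x_{0,\ell}/\tau_\ell$ the two factors always carry opposite signs, forcing the signed identity
\[
E_\ell(\tau_\ell x)\;=\;-\tau_\ell^{2/\ell}\,E_\ell\!\bigl(\tau_\ell^{-1}\phi_\ell(x)\bigr)
\]
near $x_{0,\ell}/\tau_\ell$. Differentiating at $x=x_{0,\ell}/\tau_\ell$ and cancelling the common nonzero factor $E_\ell'(x_{0,\ell})$ yields $\phi_\ell'(x_{0,\ell}/\tau_\ell)=-\tau_\ell^{2-2/\ell}$, whence $G_\ell'(x_{0,\ell})=\tau_\ell^{-2}\phi_\ell'(x_{0,\ell}/\tau_\ell)=-\tau_\ell^{-2/\ell}$. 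Since $\tau_\ell>1$ this lies in $(-1,0)$, giving the attracting and orientation-reversing nature of $x_{0,\ell}$, and the uniform bounds $T_1\le\tau_\ell\le T_2$ from Fact~\ref{uni} yield $\tau_\ell^{-2/\ell}\to 1$, so $G_\ell'(x_{0,\ell})\to -1$ as $\ell\to\infty$.

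The main obstacle is the sign bookkeeping in the $\ell$-th root step. A careless extraction would produce a spurious $+\tau_\ell^{-2/\ell}$, making $x_{0,\ell}$ an orientation-preserving fixed point and contradicting the known dynamical picture. The minus sign is dictated by the interplay between the orientation-reversing $E_\ell$ at its zero $x_{0,\ell}$ and the unimodal (rather than monotone) shape of $\phi_\ell$ near that zero, and recognising this interplay is the single nontrivial step of the argument.
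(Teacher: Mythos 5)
Your proof is correct and follows exactly the route the paper indicates (the paper merely asserts that the fact "follows from the fixed point equation", citing Epstein and Buff, without supplying details): the fixed point identity $\phi_\ell(x_{0,\ell}/\tau_\ell)=\tau_\ell x_{0,\ell}$ from~(\ref{fequni}), and the derivative via the signed $\ell$-th root $E_\ell(\tau_\ell x)=-\tau_\ell^{2/\ell}E_\ell(\tau_\ell^{-1}\phi_\ell(x))$, whose sign determination you carry out correctly. Note only that the displayed limit in the statement, "$G_\ell(x_{0,\ell})\to-1$", is evidently a typo for $G_\ell'(x_{0,\ell})\to-1$, which is what your computation yields.
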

As in Lemma~\ref{lem:26ga,2}, we then get
that for any $\ell$ function
$\log\phi_{\ell}$ is integrable on the interval $J_\ell$.
This allows to repeat the proof of Lemma~\ref{lem:26ga,1}
which gives the identical {\em formula for the drift}: 
\begin{equation}\label{drforuni}
\theta(\ell)= -(\log|\tau_{\ell}|)^{-1} \int_{J_\ell}
\log\frac{|\phi_{\ell}(x)|}{|x|}\, f_{\ell}(x)dx\; .
\end{equation}
\subsection{Passing to the limit}
In order to show that $\theta(\ell)\to +\infty$
we prove that for any sequence $\ell_i\to \infty$ there is a subsequence
$\ell_{i(j)}$ such that $\theta(\ell_{i(j)})\to +\infty$.
For the proof it is enough that the sequence $(\phi_\ell)$ is pre-compact
(which is easier to show than the convergence).
\begin{lem}\label{compuni}
Given $\ell_i\to \infty$ there is a subsequence $\ell_{i(j)}$ such that
there exist limits $x_{0}=\lim x_{0,\ell_{i(j)}}\in (0,1)$, $X=\lim X_{\ell_{i(j)}}$, 
$\tau=\lim \tau_{\ell_{i(j)}}>1$ and
$\phi_{\ell_{i(j)}}$ converge on $(0,\tau x_0)$ uniformly on compacts to a function $\phi_\infty$
such that $\phi_\infty(x)\ge 0$, $\phi_\infty(x)=0$ if and only if $x=x_0$.
Moreover, the map $\phi_\infty$ is real analytic on $(0, x_0)\cup (x_0, \tau x_0)$.
Furthermore, the function $G_\infty=\tau^{-1} \phi_\infty \tau^{-1}$ is real analytic on $(0, \tau x_0)$,
$G_\infty(x_0)=x_0$, $G_\infty'(x_0)=-1$ and $G_\infty^2(x)=x-c (x-x_0)^3+O(|x-x_0|^4)$, where $c>0$.
Finally, 
\begin{equation}\label{guni}
\tau^{-2} \phi_\infty(x)=\phi_\infty G_\infty(x),
\end{equation}
for $x$ where both sides are defined.
\end{lem}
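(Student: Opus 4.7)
The strategy is to adapt the scheme of the covering-map case (Fact~\ref{fact:24gp,1} and Proposition~\ref{prop:24gp,1}), with the role of $(\log\phi_\ell)^{-1}$ played here by the inverse branches of $\phi_\ell$, whose domains $(0,1)$ and $(0,\tau_\ell^2)$ are $\ell$-independent up to the bounded factor $\tau_\ell^2$. For the compactness of parameters, the uniform bounds in Fact~\ref{uni} together with the real inequalities $c_1/T_2<X_\ell<x_{0,\ell}$ derived just after Fact~\ref{uni} keep $\tau_\ell,x_{0,\ell},X_\ell$ in compact subintervals of their respective ranges. Bolzano--Weierstrass yields a subsequence $\ell_i$ with $\tau_{\ell_i}\to\tau>1$, $x_{0,\ell_i}\to x_0\in(0,1)$, and $X_{\ell_i}\to X>0$.

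As noted just before Corollary~\ref{corouni}, $(\phi_\ell^{-1})_+$ extends univalently to $\CC\setminus\{x\in\RR:x\notin[0,1]\}$ and $(\phi_\ell^{-1})_-$ to $\CC\setminus\{x\in\RR:x\notin[0,\tau_\ell^2]\}$, with real ranges contained in the uniformly bounded interval $[0,\tau_\ell x_{0,\ell}]$. Montel's theorem then makes them normal families on every compact subset of a common slit complement, so along a further subsequence $(\phi_{\ell_i}^{-1})_\pm\to(\phi_\infty^{-1})_\pm$ almost uniformly. Since the endpoint values $(\phi_\ell^{-1})_+(0)=x_{0,\ell}\to x_0$ and $(\phi_\ell^{-1})_+(1)=0$ are distinct limits, Hurwitz's theorem forces the limit to be non-constant, hence univalent; the analogous argument applies to the other branch, whose endpoint limits are $x_0$ and $\tau x_0$. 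Inverting, $\phi_\infty$ is real analytic and strictly positive on $(0,x_0)\cup(x_0,\tau x_0)$, and $\phi_{\ell_i}\to\phi_\infty$ uniformly on every compact subset there. To extend across $x_0$ use that each $\phi_\ell$ is monotone on either side of $x_{0,\ell}$ with $\phi_\ell(x_{0,\ell})=0$: for $x$ in a small neighborhood of $x_0$, monotonicity squeezes $\phi_\ell(x)$ between $0$ and $\phi_\ell(x_0\pm\delta)\to\phi_\infty(x_0\pm\delta)$, which tends to $0$ as $\delta\to 0$. Hence $\phi_\ell\to\phi_\infty$ uniformly on compacts of all of $(0,\tau x_0)$ with $\phi_\infty(x_0)=0$.

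The rewriting $\tau_\ell^{-2}\phi_\ell(x)=\phi_\ell(G_\ell(x))$ of~(\ref{fequni}) passes to the subsequential limit, giving~(\ref{guni}). Real analyticity of $G_\infty=\tau^{-1}\phi_\infty\tau^{-1}$ on the whole of $(0,\tau x_0)$ (including at $x_0$) is automatic because $\tau^{-1}(0,\tau x_0)=(0,x_0)$ lies in the analyticity region of $\phi_\infty$; from Fact~\ref{assunipro}, $G_\infty(x_0)=x_0$ and $G_\infty'(x_0)=\lim(-\tau_\ell^{-2/\ell})=-1$. A Taylor expansion $G_\infty(x_0+y)=x_0-y+ay^2+by^3+O(y^4)$ followed by a direct composition gives $G_\infty^2(x_0+y)=x_0+y-2(a^2+b)y^3+O(y^4)$, so $c=2(a^2+b)$. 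Strict positivity of $c$ follows because for every finite $\ell$ in the sequence the map $G_\ell^2$ has a topologically attracting fixed point at $x_{0,\ell}$ with multiplier $\tau_\ell^{-4/\ell}\in(0,1)$, hence $G_\ell^2(x)-x$ exhibits the two-sided contraction sign pattern on a fixed neighborhood of $x_{0,\ell}$; this sign pattern is inherited by the leading nonlinear term of $G_\infty^2-\mathrm{id}$ in the limit, forcing $c>0$.

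The main difficulty is the normal-family step of the second paragraph: one cannot pass to the limit in $\phi_\ell=E_\ell^\ell$ directly, because the values of $E_\ell$ sit inside a range approaching $(-1,1)$ and their $\ell$-th powers generically degenerate to $0$ or to $\infty$ pointwise. Working instead with the inverse branches $(\phi_\ell^{-1})_\pm$ circumvents this, as their domains are intrinsically $\ell$-independent and the Hurwitz/Montel machinery applies cleanly inside a fixed slit complement, producing a genuinely non-degenerate $\phi_\infty$.
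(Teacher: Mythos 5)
Your overall scheme --- compactness of the parameters from the real bounds in Fact~\ref{uni}, normality of the inverse branches $(\phi_\ell^{-1})_\pm$ on the slit planes, Hurwitz to keep the limits univalent, squeezing across the critical point, and passage to the limit in the fixed point equation --- is essentially the route the paper intends: its own ``proof'' is a citation to Theorem 2 of~\cite{feig} together with the normalizations $\phi_\ell(1/\tau_\ell)=1/\tau_\ell^2$, $\phi_\ell'(1/\tau_\ell)=1$, $\phi_\ell(x_{0,\ell}/\tau_\ell)=\tau_\ell x_{0,\ell}$. Note that it is these \emph{interior} normalization points, rather than the values at the tips $0,1,\tau_\ell^2$ of the slits, that you should feed to Hurwitz: locally uniform convergence on the slit domain controls nothing at its boundary, so your non-degeneracy argument via the ``endpoint values'' needs to be rerouted through such interior data (or through monotonicity plus the bound $X_\ell\geq c_1/T_2$). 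That is a fixable slip.

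The genuine gap is the claim $c>0$. Your argument is that $G_\ell^2$ has an attracting fixed point with multiplier $\tau_\ell^{-4/\ell}\in(0,1)$, hence $G_\ell^2-\mathrm{id}$ has the contracting sign pattern ``on a fixed neighborhood'', and this passes to the limit. First, local attraction only gives that sign pattern on a neighborhood which may shrink as the multiplier tends to $1$; to get a fixed neighborhood you must use the functional equation $\phi_\ell\circ G_\ell^2=\tau_\ell^{-4}\phi_\ell$ together with the monotonicity of $\phi_\ell$ on each side of $x_{0,\ell}$, not the multiplier. Second, and more seriously, even with the sign pattern on a fixed interval the limit only yields $G_\infty^2(x)\le x$ for $x>x_0$ and $G_\infty^2(x)\ge x$ for $x<x_0$, i.e.\ $c\ge 0$. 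The case $c=0$ is not excluded: then $G_\infty^2-\mathrm{id}$ would begin at some higher odd order (it cannot vanish identically, by~(\ref{guni})), which is consistent with everything you have established and would change the asymptotics of the Fatou coordinate used later in the section. Showing that the degeneracy is \emph{exactly} cubic --- equivalently that $SG_\infty(x_0)=-3c$ is strictly negative, not merely non-positive, which again does not follow from taking limits of the strict inequalities $S\phi_\ell<0$ --- is the one substantive analytic fact in the lemma; it is precisely what Theorem 2 of~\cite{feig} supplies and what your proof does not.
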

The proof is based on Fact~\ref{uni} and then proceeds as in~\cite{feig}, Theorem 2.
One takes into account that
$\phi_\ell(1/\tau_\ell)=1/\tau_\ell^2$, $\phi_\ell'(1/\tau_\ell)=1$ and 
$\phi(x_{0,\ell}/\tau_\ell)=\tau_\ell x_{0,\ell}$.

Lemma~\ref{compuni} allows us to define the induced map $\Phi_\infty: J_\infty\to J_\infty$
($J_\infty=(X, \tau X)$)
and the transfer operator $P_\infty$ using the limit map $\phi_\infty$ and then conclude 
that Corollary~\ref{corouni} and Fact~\ref{densuni} hold for $\ell=\infty$.
In particular, there is unique invariant density $f_\infty$ for $P_\infty$
which is real analytic and strictly positive on $J_\infty$.
Moreover, repeating the proof of Proposition~\ref{prop:24gp,1}, we have that
the sequence of densities $f_{\ell_{i(j)}}$ tend to $f_\infty$ almost uniformly on $J_\infty$.

On the other hand, by~(\ref{guni}), the function $-(\log\tau^4)^{-1}\log\phi_\infty$ is a Fatou coordinate
for $G_\infty^2$ at the fixed point $x_{0}$. Since $G_\infty^2$ has two attracting petals, we have 
that necessarily as $x\to x_0$, 
$$\log\phi_\infty(x)=C_0(x-x_0)^{-2}+C_1(x-x_0)^{-1}+O(|\log|x-x_0||),$$
where $C_0<0$.
There is $K>0$ such that $\log\phi_{\ell}(x)f_{\ell}(x)<K$ for all $\ell$ large or $\ell=\infty$
and all $x\in J_{\ell}$. Hence,
$$\int_{J_\infty}\log\phi_\infty(x)f_\infty(x)dx=-\infty.$$
Coupled with the fact that $\log\phi_{\ell_{i(j)}}(x)f_{\ell_{i(j)}}(x)\to \log\phi_\infty (x)f_\infty(x)$
and $J_{\ell_{i(j)}}\to J_\infty$, this implies that
$$\theta(\ell)=-(\log|\tau_{\ell}|)^{-1} \int_{J_\ell}
\log\frac{|\phi_{\ell}(x)|}{|x|}\, f_{\ell}(x)dx\to +\infty$$
along the subsequence $\ell_{i(j)}$.


\begin{thebibliography}{99}
\bibitem{buff} Buff, X.:
{\em Fibonacci fixed points of renormalization}. Ergod. Th. Dynam. Sys. {\bf 20} (2000), 1287-1317
\bibitem{buffcher} Buff, X., Cheritat, A.: {\em Quadratic Julia sets with positive area}. Ann. of Math.
(2) {\bf 176} (2012), no.2, 673-746
\bibitem{bkns} Bruin H., Keller, G., Nowicki, T. \& Van Strien, S.:
{\em Wild Cantor attractors exist}, Ann. Math. {\bf 143}, 97-130 (1996) 
\bibitem{bruin} Bruin H.: {\em Topological conditions for the existence 
of absorbing Cantor sets}. TAMS {\bf 350} (1998), 2229-2263
\bibitem{E} Epstein, H.: {\em Fixed points of the period-doubling operator (Lecture Notes)}.  
Lausanne, 1992
\bibitem{feig} Levin, G. \& \'{S}wi\c{a}tek, G.: {\em
Dynamics and universality of unimodal mappings with infinite criticality},
Comm. Math. Phys. {\bf 258}, 103-133 (2005)
\bibitem{lsuniv} Levin, G. \& \'{S}wi\c{a}tek, G.:
{\em Universality of critical circle covers}, 
Comm. Math. Phys. {\bf 228}, 371-399 (2002)
\bibitem{leswi:common}  Levin, G. \& \'{S}wi\c{a}tek, G.: {Common
  limits of Fibonacci circle maps}, Comm. Math. Phys. {\bf 312}, 695-734 (2012)
\bibitem{morsmania} Moreira, C.G., Smania, D.: {\em Metric stability 
for random walks (with applications in renormalization theory)}. 
Frontiers in Complex Dynamics, 261-322, Princeton Math. Ser., 51, 
Princeton Univ. Press, Princeton, NJ, 2014
\bibitem{profesorus} McMullen, C.: {\em Renormalization and
3-manifolds which fiber over the circle}, Ann. of Math. Studies {\bf
142}, Princeton University Press (1998)
\bibitem{ns} Van Strien, S., Nowicki, T.: {\em Polynomial maps with a Julia set of positive 
Lebesgue measure: Fibonacci maps}. arXiv:math/9402215
 
\end{thebibliography}
\end{document}